\documentclass[a4paper]{article}
\usepackage{amssymb}
\usepackage{amsmath}
\usepackage{amsthm}
\usepackage{authblk}
\usepackage[USenglish]{babel}
\usepackage{graphicx}
\usepackage[colorlinks=true]{hyperref}
\usepackage[latin1]{inputenc}
\setlength{\parindent}{0pt}
\addtolength{\voffset}{-1.5cm}
\addtolength{\textheight}{3cm}
\addtolength{\hoffset}{-1.5cm}
\addtolength{\textwidth}{3cm}
\newtheorem{theorem}{Theorem}[section]
\newtheorem{lemma}[theorem]{Lemma}
\newtheorem{proposition}[theorem]{Proposition}
\newtheorem{corollary}[theorem]{Corollary}
\newtheorem{remark}[theorem]{Remark}
\newtheorem{definition}[theorem]{Definition}
\newtheorem{example}[theorem]{Example}
\numberwithin{equation}{section}
\newcommand{\N}{\mathbb N}

\newcommand{\R}{\mathbb R}

\newcommand{\I}{\mathbb I}

\newcommand{\mbf}{\mathbf}
\newcommand{\mcal}{\mathcal}

\newcommand{\mrm}{\mathrm}

\renewcommand{\a}{\alpha}

\newcommand{\G}{\Gamma}
\renewcommand{\d}{\delta}
\newcommand{\D}{\Delta}
\newcommand{\e}{\varepsilon}

\renewcommand{\t}{\theta}
\newcommand{\Th}{\Theta}

\renewcommand{\O}{\Omega}
\newcommand{\wt}{\widetilde}
\newcommand{\wh}{\widehat}
\newcommand{\ol}{\overline}

\newcommand{\pa}{\partial}
\newcommand{\n}{\nabla}
\newcommand{\fa}{\forall}

\newcommand{\us}{\underset}

\newcommand{\To}{\Rightarrow}
\newcommand{\map}{\mapsto}

\newcommand{\sm}{\setminus}

\newcommand{\sub}{\subset}

\newcommand{\eq}{\equiv}

\newcommand{\x}{\times}
\renewcommand{\c}{\circ}
\newcommand{\q}{\quad}
\renewcommand{\l}{\left}
\renewcommand{\r}{\right}
\newcommand{\bin}[2]{\left(\genfrac{}{}{0pt}{}{#1}{#2}\right)}
\newcommand{\qm}[1]{``#1''}
\allowdisplaybreaks
\begin{document}

\everymath{\displaystyle}

\title{Non-uniqueness of blowing-up solutions to the Gelfand problem}
\author{Luca Battaglia\thanks{Universit\`a degli Studi Roma Tre, Dipartimento di Matematica e Fisica, Largo S. Leonardo Murialdo 1, 00146 Roma - lbattaglia@mat.uniroma3.it}, Massimo Grossi\thanks{Sapienza Universit\`a di Roma, Dipartimento di Matematica, Piazzale Aldo Moro 5, 00185 Roma - massimo.grossi@uniroma1.it}, Angela Pistoia\thanks{Sapienza Universit\`a di Roma, Dipartimento di Scienze di Base e Applicate, Via Antonio Scarpa 16, 00161 Roma - angela.pistoia@uniroma1.it}}
\date{}

\maketitle\

\begin{abstract}
\noindent
We consider the \emph{Gelfand problem}
$$\l\{\begin{array}{ll}-\D u=\rho^2V(x)e^u&\text{in }\O\\u=0&\text{on }\pa\O\end{array}\r.,$$
where $\O$ is a planar domain and $\rho$ is a positive small parameter.\\
Under some conditions on the potential $0<V\in C^\infty\l(\ol\O\r)$, we provide the first examples of multiplicity for blowing-up solutions at a given point in $\O$ as $\rho\to0.$
The argument is based on a refined Lyapunov-Schmidt reduction and the computation of the degree of a finite-dimensional map.
\end{abstract}\

\section{Introduction}\

We consider the following problem, known as \emph{Gelfand problem}:
\begin{equation}\label{liou}
\l\{\begin{array}{ll}-\D u=\rho^2V(x)e^u&\text{in }\O\\u=0&\text{on }\pa\O\end{array}\r.;
\end{equation}
where $\O\sub\R^2$ is a smooth bounded domain, $\rho>0$ is a positive parameter and $V(x)\in C^\infty\l(\ol\O\r)$ is a smooth positive function.\\
Such an equation has been intensively studied in the recent decades due to its many applications in different fields, such as Gaussian curvature prescription problem in conformal geometry (see for instance \cite{kw,cy1,cy2}), Chern-Simons theory in mathematical physics (see \cite{tar,yang}) and description of Euler flow in statistical mechanics (see \cite{clmp1,clmp2,kie}). In nonlinear analysis, it is considered a \emph{critical} nonlinearity for planar elliptic problems, a counterpart of the higher-dimensional critical Sobolev equation.\\

Several results have been given concerning existence and multiplicity of solutions to \eqref{liou}, both using variational methods (\cite{dem,dm}) and computing the Leray-Schauder degree (\cite{chenlin}). A rather complete blow-up analysis has also been provided by different authors (\cite{bremer,ls,mw,chenlin,nasu}).\\
In case of a blowing-up family of solutions to \eqref{liou} as $\rho$ goes to $0$ with finite mass, blow-up occurs at a finite number of distinct internal points $\xi^1,\dots,\xi^N\in\O$ with $\xi^i\ne\xi^j$ for $i\ne j$, with no residual mass; moreover, the $N$-tuple of concentration points is a critical point of the \emph{reduced functional}
\begin{equation}\label{fxin}
\mcal F\l(\xi^1,\dots,\xi^N\r):=\sum_{i=1}^NH\l(\xi^i,\xi^i\r)+\sum_{i,j=1,i\ne j}^NG\l(\xi^i,\xi^j\r)+\frac1{4\pi}\sum_{i=1}^N\log V\l(\xi^i\r),
\end{equation}
where $G$ denotes Green's function of $-\D$ on $\O$ and $H$ its regular part, namely
$$\l\{\begin{array}{ll}-\D G(x,y)=\d_y(x)&x\in\O\sm\{y\}\\G(x,y)=0&x\in\pa\O\end{array}\r.\q\q\q\q\q\q H(x,y):=G(x,y)+\frac1{2\pi}\log|x-y|.$$\

As a counterpart of this blow-up analysis, in \cite{bp,dkm,egp} families of blowing-up solutions to \eqref{liou} have been constructed, with the concentration points being any stable critical points of \eqref{fxin}.\\
If $N=1$ the function \eqref{fxin} reduces to 
\begin{equation}\label{f}
\mcal F(\xi)=H(\xi,\xi)+\frac1{4\pi}\log V(\xi),\q\xi\in\O 
\end{equation}
which has always a critical point, i.e. a maximum point. If $N\ge2$ 
critical points of $\mcal F$ always exist if $\O$ is multiply connected (see \cite{dkm}) or if it is a dumbbell-shaped domain (see \cite{egp}), whereas if $\O$ is convex they do not exist at all (see \cite{tg}).\\

Uniqueness of blowing-up families has been addressed in \cite{bjly1,bjly2}. In these papers, the authors prove that a sequence of solutions to \eqref{liou} blowing-up at given $\xi^1,\dots,\xi^N$ is unique provided the point $\l(\xi^1,\dots,\xi^N\r)$ is a non-degenerate critical point of $\mathcal F$, namely $D^2\mcal F\l(\xi^1,\dots,\xi^N\r)$ is invertible. A similar result had already been proved in \cite{gg} for domains being symmetric and convex with respect to both axes. Their results can be summarized in the following 

\begin{theorem}%\label{t0}
(\cite{bjly1,bjly2,gg})$ $\\
Let $V\in C^\infty\l(\ol\O\r)$ be a positive potential such that the energy functional $\mcal F$ defined by \eqref{fxin} has a non-degenerate critical point $\l(\xi^1,\dots,\xi^N\r)$, i.e.
\begin{equation}\label{nond}
\det\l(D^2\mcal F\l(\xi^1,\dots,\xi^N\r)\r)\ne0.
\end{equation}
Let $u_\rho^1,u_\rho^2$ be two families of solutions blowing-up at the same $\xi^1,\dots,\xi^N$ as $\rho$ goes to $0$.
Then, for small $\rho$, one has $u_\rho^1\eq u_\rho^2$.
\end{theorem}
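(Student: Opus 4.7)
The plan is to argue by contradiction. Suppose along a sequence $\rho\to0$ we have $u_\rho^1\not\equiv u_\rho^2$, and set
$$\eta_\rho:=\l\|u_\rho^1-u_\rho^2\r\|_{L^\infty(\O)},\q w_\rho:=\frac{u_\rho^1-u_\rho^2}{\eta_\rho},$$
so that $\|w_\rho\|_\infty=1$ and, dividing $-\D(u_\rho^1-u_\rho^2)=\rho^2V(x)(e^{u_\rho^1}-e^{u_\rho^2})$ by $\eta_\rho$, the function $w_\rho$ solves the linearized equation
$$-\D w_\rho=\rho^2V(x)c_\rho(x)w_\rho\text{ in }\O,\q w_\rho=0\text{ on }\pa\O,\q c_\rho(x):=\int_0^1e^{tu_\rho^1+(1-t)u_\rho^2}\mrm dt.$$
The goal is to reach a contradiction by showing $w_\rho\to0$ uniformly on $\O$.

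The first step is an inner analysis at each blow-up point $\xi^i$. Writing $u_\rho^k$ in the standard bubble form centered at some $\xi^{i,k}_\rho\to\xi^i$ with scale $\mu^{i,k}_\rho\to0$ and rescaling $w_\rho$ accordingly, one extracts (up to subsequences) a bounded entire solution $\wt w^i$ of the linearized Liouville equation around the standard bubble on $\R^2$. By the classical non-degeneracy of that bubble, $\wt w^i$ is a linear combination of the three kernel elements, namely the dilation mode $(1-|y|^2)/(1+|y|^2)$ and the two translation modes $y_j/(1+|y|^2)$, with coefficients $c_0^i,c_1^i,c_2^i\in\R$ that encode the first-order discrepancies between the centers and scales of the two blow-up profiles at $\xi^i$.

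Next comes the outer analysis: away from the blow-up set, $w_\rho$ converges to a function harmonic on $\O\sm\{\xi^1,\dots,\xi^N\}$ built out of $G(\cdot,\xi^i)$ and its $\xi$-gradients, with weights determined by the $c_j^i$. Testing the equation for $w_\rho$ against local Pohozaev multipliers on small disks around each $\xi^i$ (equivalently, projecting onto the bubble kernel and passing to the limit) then yields a homogeneous linear system of $3N$ equations in the $3N$ unknowns $\{c_0^i,c_j^i\}$. The dilation coefficients $c_0^i$ decouple and vanish by inspection of the total mass, while the coefficient matrix of the remaining $2N$ equations in $\{c_1^i,c_2^i\}$ coincides, up to invertible factors, with $D^2\mcal F(\xi^1,\dots,\xi^N)$. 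The non-degeneracy assumption \eqref{nond} therefore forces every $c_j^i$ to vanish, so $\wt w^i\equiv0$ at each blow-up point and the outer limit vanishes as well; a standard elliptic bootstrap upgrades this to $w_\rho\to0$ uniformly on $\O$, contradicting $\|w_\rho\|_\infty=1$.

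The hardest step is the precise identification of the linear system with $D^2\mcal F$. This demands a sharp Lyapunov--Schmidt expansion of each blow-up profile in which the $C^1$ remainders of $u_\rho^k$ are controlled up to order $o(\mu^{i,k}_\rho)$, so that when $w_\rho$ is tested against the Pohozaev multipliers, the error terms are genuinely $o(1)$ and the Hessian of $\mcal F$ emerges cleanly, without spurious additive contributions.
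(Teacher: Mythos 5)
The paper does not prove this statement: it is quoted as a known result from \cite{bjly1,bjly2,gg}, so there is no internal argument to compare against. Your sketch does reconstruct the strategy of those references at a high level --- normalize the difference, extract limits in the kernel of the linearized Liouville operator at each bubble, match with the outer region, and identify a linear system whose invertibility is equivalent to \eqref{nond}. As a strategy this is the right one, and you correctly flag the sharp $C^1$ expansion as the crux.

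That said, the sketch is silent on several points that in the cited papers constitute most of the work, and at least one statement as written is not quite right. Before rescaling $w_\rho$ to a single entire kernel profile at each $\xi^i$, one must prove that the two families blow up with commensurable scales $\mu^{i,1}_\rho\sim\mu^{i,2}_\rho$ and that their centers satisfy $\l|\xi^{i,1}_\rho-\xi^{i,2}_\rho\r|=o\l(\mu^{i}_\rho\r)$; otherwise the potential $c_\rho$ in the quotient equation does not converge to the bubble density under rescaling. This commensurability is a separate a priori estimate, not a consequence of blowing up at the same point. Second, the outer limit of the bounded quotient $w_\rho$ is in fact identically zero: away from the concentration set, $w_\rho$ converges to a bounded harmonic function on $\O\sm\{\xi^i\}$ vanishing on $\pa\O$, which extends harmonically across the punctures and hence vanishes. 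The Green's-function structure you invoke only emerges after amplifying $w_\rho$ by the correct diverging rate, and it is precisely the bookkeeping of that rate that makes $D^2\mcal F$ appear rather than a degenerate matrix. Finally, the vanishing of the dilation coefficients $c_0^i$ does not follow ``by inspection of the total mass''; in \cite{bjly1,bjly2} it is obtained from a different Pohozaev-type identity than the one producing the Hessian system. In short, the route is the right one but the sketch elides exactly the estimates (scale commensurability, rate amplification, the $c_0$ identity) that make the limiting argument close.
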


Let us point out that the non-degeneracy condition \eqref{nond} is \qm{almost always} satisfied. Indeed, it is clear that for generic potential $V$ the function $\mathcal F$ is a Morse function. On the other hand, if $V\eq1$ for generic domains $\Omega$ the function $\mathcal F$ is still a Morse function as proved in \cite{mipi,bamipi}. Therefore, roughly speaking we could say that solutions blowing-up at a given critical point of $\mathcal F$ are \qm{almost always} unique. Hence it is quite natural to ask if the uniqueness does still holds when the blow-up point is a \emph{degenerate} critical point of $\mcal F$. The aim of this paper is to build examples of degenerate critical points of $\mcal F$ for which the uniqueness does not hold anymore.\\

In order to state our main result, it is necessary to introduce some notations.\\
Without loss of generality, we assume that $0$ is a critical point of the function $\mathcal F$ defined in \eqref{f}. We believe that the same argument may work also for blow-up at multiple points, but we will consider only one point in order to simplify computations and notations. Precisely, we look for solutions to \eqref{liou} blowing-up in the following sense:

\begin{definition}\label{blowup}$ $\\
Let $\{u_\rho\}$ be a family of solution to \eqref{liou} for some $\rho\to0$. We say that $u_\rho$ \emph{blows-up} at $0\in\O$ if the following occur:
\begin{itemize}
\item $u_\rho$ is uniformly bounded from above in $L^\infty_{\mrm{loc}}\l(\ol\O\sm\{0\}\r)$;
\item There exists a sequence $\xi_\rho\us{\rho\to0}\to0$ such that $u_\rho(\xi_\rho)\us{\rho\to0}\to+\infty$.
\end{itemize}
\end{definition}

We want to build two distinct solutions $u_\rho^1\not\equiv u_\rho^2$ to problem \eqref{liou} such that $u_\rho^1\l(\xi_\rho^1\r),u_\rho^2\l(\xi_\rho^2\r)\to+\infty$ for a suitable choice of distinct points $\xi_\rho^1\ne\xi_\rho^2$, both approaching $0$ as $\rho\to0$.\\
Next let us describe the assumptions on $\mcal F$: we choose its second derivatives to be all vanishing at $0$ and its third derivative to satisfy some non-degeneracy condition at $0$, in a sense described by the following definition.

\begin{definition}\label{ammis}$ $\\
Let $V\in C^\infty\l(\ol\O\r)$ be a positive potential.
We say that $V$ is \emph{admissible} if the functional $\mcal F$ defined by \eqref{f} satisfies the following properties:
\begin{itemize}
\item $\pa_{\xi_i}\mcal F(0)=\pa^2_{\xi_i\xi_j}\mcal F(0)=0$ for all $i,j=1,2$;
\item The homogeneous polynomial map $\mcal P:\mathbb R^2\to\mathbb R$ defined as
\begin{equation}\label{p}
\mcal P(\xi)=\frac{4\pi^2}3\l\langle D^3\mcal F(0),\xi,\xi,\xi\r\rangle=\frac{4\pi^2}3\sum_{i,j,k=1}^2\l(\pa^3_{\xi_i\xi_j\xi_k}\mcal F(0)\r)\xi_i\xi_j\xi_k
\end{equation}
has $\xi=0$ as its only critical point.
\end{itemize}
\end{definition}

The main result of this paper reads as follows.

\begin{theorem}\label{molt}$ $\\
Let $V\in C^\infty\l(\ol\O\r)$ be a positive admissible potential (in the sense of Definition \ref{ammis}). Set
\begin{equation}\label{eta}
\eta_0:=\Big(64\pi^3\l(\pa^2_{x_1y_1}H(0,0)+\pa^2_{x_2y_2}H(0,0)\r)\n_xH(0,0)-\pi\n(\D\log V)(0)\Big)\frac{V(0)}8e^{8\pi H(0,0)}.
\end{equation}
If the equation
\begin{equation}\label{sc}\n\mcal P(\xi)=\eta_0
\end{equation}
has $K$ distinct stable solutions (in the sense of Definition \ref{stable}), then there exist $\rho_0>0$ and $K$ families of solutions $\l\{u_\rho^i\r\}$, $i=1,\dots,K$, to \eqref{liou} for $\rho\in(0,\rho_0)$, all blowing-up at $0$ as $\rho$ goes to $0$ (in the sense of Definition \ref{blowup}) and such that $u_\rho^i\not\equiv u_\rho^j$ if $i\not=j$.
\end{theorem}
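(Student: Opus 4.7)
\emph{Plan.} I would combine a \emph{refined} Lyapunov--Schmidt reduction with a finite-dimensional Brouwer-degree argument. The refinement is dictated by the degeneracy of $\mcal F$ at $0$: since $\n\mcal F(0)=0$ and $D^2\mcal F(0)=0$, the usual reduction (whose leading order locates critical points of $\mcal F$) says nothing about where to put the concentration point, and one has to zoom in at scale $|\xi|\sim\rho$. Concretely, for $\xi$ in a small neighborhood of $0$ I would take the standard bubble $U_{\d,\xi}(x)=\log\frac{8\d^2}{(\d^2+|x-\xi|^2)^2}$ with $\d=\d(\rho,\xi)$ chosen via an explicit algebraic relation (of the form $\rho^2\d^2V(\xi)e^{8\pi H(\xi,\xi)}=1$ up to lower-order corrections) so that the leading-order profile matches, and denote by $PU_{\d,\xi}$ its Dirichlet projection. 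Writing a candidate solution as $u=PU_{\d,\xi}+\phi$ with $\phi$ orthogonal to the three-dimensional kernel of the linearized Liouville operator, a standard contraction argument gives, for $\rho$ small, a unique smooth map $\xi\mapsto\phi_\xi$. The \emph{refined} input is a sharp $C^1$-estimate showing that $\phi_\xi$ contributes only at order $o(\rho^3)$ to the derivative of the reduced functional on the window $|\xi|\lesssim\rho$.

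\emph{Expansion and rescaling.} Let $\wt J_\rho(\xi)$ denote the energy of \eqref{liou} evaluated at $PU_{\d(\rho,\xi),\xi}+\phi_\xi$; its critical points near $0$ correspond bijectively to solutions of \eqref{liou} blowing up there. I would push the expansion of $\wt J_\rho$ to the order
\begin{equation*}
\wt J_\rho(\xi)=A(\rho)+8\pi\rho^2\mcal F(\xi)+\rho^4R(\xi,\rho)+o(\rho^4),
\end{equation*}
and compute $\n_\xi R(0,0)$ explicitly. The two contributions come from (i) the bubble--$H$ interaction pushed to second order in $|x-\xi|$, which produces the $(\pa^2_{x_1y_1}H(0,0)+\pa^2_{x_2y_2}H(0,0))\n_xH(0,0)$ term in \eqref{eta}, and (ii) the Taylor expansion of $\log V$ to third order, which produces the $\n(\D\log V)(0)$ term. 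Using the identity $\n\mcal F(\xi)=\frac{1}{8\pi^2}\n\mcal P(\xi)+O(|\xi|^3)$, which follows from \eqref{p} together with $\n\mcal F(0)=D^2\mcal F(0)=0$, and rescaling $\xi=\rho\zeta$, the critical point equation $\n\wt J_\rho(\xi)=0$ factors $\rho^4$ and reduces to
\begin{equation*}
\n\mcal P(\zeta)-\eta_0=o(1)\qquad\text{uniformly on compact subsets of }\R^2.
\end{equation*}

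\emph{Degree argument and main obstacle.} By Definition \ref{stable} each stable zero $\zeta^{(i)}$ of $\n\mcal P(\zeta)=\eta_0$ has nonzero local Brouwer degree. Homotopy invariance applied to small balls around each $\zeta^{(i)}$ then yields, for $\rho$ small enough, a zero $\zeta^{(i)}_\rho\to\zeta^{(i)}$ of the perturbed equation. The corresponding concentration points $\xi^{(i)}_\rho=\rho\zeta^{(i)}_\rho$ are pairwise distinct, and so are the resulting solutions $u^i_\rho=PU_{\d(\rho,\xi^{(i)}_\rho),\xi^{(i)}_\rho}+\phi_{\xi^{(i)}_\rho}$ of \eqref{liou}. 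The hard part is the combination of the expansion and the sharp estimate on $\phi_\xi$: one must make the $\eta_0$ correction visible above the generic $O(\rho^4)$ noise, and simultaneously prove that the ansatz error does not produce spurious $O(\rho^3)$ contributions to $\n_\xi\wt J_\rho$ on the tiny window of size $\rho$ around $0$ where the zero lies. Tuning the orthogonality conditions defining $\phi_\xi$ and the precise choice of $\d(\rho,\xi)$ so that this cancellation takes place is the main technical content of the refined reduction.
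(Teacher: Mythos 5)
Your plan has the right skeleton (refined Lyapunov--Schmidt plus a local Brouwer-degree argument at each stable zero of the finite-dimensional map), but two of its specific commitments fail, and they are precisely the points where the paper has to do real work.

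\textbf{The ansatz $u=\mathrm{P}U_{\delta,\xi}+\phi$ is not refined enough.} With only the projected bubble, the error $\mcal R=\D\mrm PU+\rho^2Ve^{\mrm PU}$ has size $\|\mcal R\|_p=O\l(\rho^{\frac2p-1}\r)$ (see the Remark after Proposition~\ref{rlp}), the fixed-point argument then gives $\|\phi_\xi\|=O\l(\rho^{\frac2p-1}\log\frac1\rho\r)$, and when you test the residual against $\mrm P\psi_i$ the linear term contributes $\int_\O(\mcal L\phi)\mrm P\psi_i=O\l(\rho^{\frac4p-3}\log\frac1\rho\r)$. For $p$ near $1$ this is of order $\rho^{1+o(1)}\log\frac1\rho$, which dominates the target term $\rho^2\log\frac1\rho\,\eta_0$ by a full power of $\rho$: the $\eta_0$ signal is buried under the noise produced by $\phi_\xi$, and no choice of orthogonality conditions can fix that. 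This is exactly why the paper enriches the ansatz with two explicit $O(\rho^2\tau^2)$ corrections — the concentrated local term $\mrm P\wh w\l(\frac{x-\xi}{\rho\tau}\r)$ of Subsection~\ref{sec_pw} (which cancels the quadratic part of $\mcal E$) and the global Dirichlet solution $\wt W$ of Subsection~\ref{sec_w} — so that $\|\mcal R\|_p$ improves to $O\l(\rho^{\frac2p-1}\l(\rho^2\log\frac1\rho+|\xi|^2\r)\r)$, $\phi_\xi$ becomes genuinely negligible (Lemma~\ref{nophi}), and the leading contribution to the reduced equation can be computed in closed form. The ``sharp $C^1$-estimate on $\phi_\xi$'' you invoke is exactly what cannot hold without these corrections; worse, the corrections are not merely error-killers — they are the \emph{source} of $\eta_0$: the logarithmic growth $\wh w_1(r)=-2\log^2r+\dots$ of Lemma~\ref{w1} and the $\log$-singular expansion of $\wt W$ in Lemma~\ref{wt} produce the explicit term $\Th(x-\xi)$ of \eqref{theta}, whose projection onto $\psi_i$ is $\eta_i=\rho^2\log\frac1\rho\,\eta_{0,i}$. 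With your ansatz there is simply no mechanism generating the second line of \eqref{eta}.

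\textbf{The rescaling window $\xi=\rho\zeta$ is wrong.} Because $\nabla\mcal P$ is homogeneous of degree $2$, matching $\nabla\mcal P(\xi)$ against the forcing $\rho^2\log\frac1\rho\,\eta_0$ from Proposition~\ref{eqxi} forces $|\xi|\sim\rho\sqrt{\log\frac1\rho}$, not $|\xi|\sim\rho$. On your window the reduced equation becomes $\rho^2\nabla\mcal P(\zeta)=\rho^2\log\frac1\rho\,\eta_0+O(\rho^2)$, i.e.\ $\nabla\mcal P(\zeta)=\l(\log\frac1\rho\r)\eta_0+o\l(\log\frac1\rho\r)$, whose solutions escape to infinity; the degree argument on a fixed ball then detects nothing. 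The factor $\sqrt{\log\frac1\rho}$ is not cosmetic: it is what makes the two concentration points separate at scale $\rho\sqrt{\log\frac1\rho}$ rather than $\rho$ and is used at the very end of the proof to show $u^1\not\equiv u^2$ via $|u^1(\xi^1_0)-u^2(\xi^1_0)|\sim2\log\log\frac1\rho$.

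Two smaller points. The paper does not set up the reduction variationally: it tests the PDE residual $\mcal R+\mcal L\phi+\mcal N(\phi)$ directly against $\mrm P\psi_1,\mrm P\psi_2$ rather than expanding an energy $\wt J_\rho$; an energy route could in principle work (and is standard in other Liouville constructions), but it would require the same corrected ansatz to push the expansion below the $\eta_0$ scale. Also, the orthogonality is imposed only against the two translation modes $\mrm P\psi_1,\mrm P\psi_2$; the dilation mode $\psi_0$ is handled by the algebraic choice of $\tau$ in \eqref{tau}, so the ``three-dimensional kernel'' in your setup is inconsistent with simultaneously fixing $\delta$ algebraically.
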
\

We will use the following definition of stable solution.\\
\begin{definition}\label{stable}$ $\\
Let $\mcal Z:\R^n\to\R^n$ be a continuous function. We say that $x_0$ is a stable solution to the equation $\mcal Z(x)=y$ if for any $\e>0$ small enough and $\mcal W:\R^n\to\R^n$ with $\|\mcal W-\mcal Z\|_{C^0(\R^n)}\le\e$ there exists $x_\e\in\R^n$ such that $\mcal W(x_\e)=y$ and $x_\e\to x_0$ as $\e\to 0$ 
\end{definition}
In particular, if $x_0$ is the only solution to $\mcal Z(x_0)=y$ in $B_R(x_0)$ and the Brouwer degree $\deg(\mcal Z,B_R(x_0),y)$ is different from zero, then $x_0$ is a stable solution to the equation $\mcal Z(x)=y.$

\begin{example}%\label{ex}
$ $\\
We point out that it is always possible to choose the potential $V$ so that all the assumptions of Theorem \ref{molt} hold true.
Let $V$ be such that in a neighborhood of the origin the following Taylor expansion for the function $\mcal F$ holds true
$$\mcal F(\xi)=H(\xi,\xi)+\frac1{4\pi}\log V(\xi)=\a\xi_1^3-\xi_1\xi_2^2,\q\text{with }\a>0.$$
It is clear that $0$ is a fully degenerate critical point of $\mcal F$. Moreover, a simple computation shows that
$$\mcal P(\xi)=8\pi^2\l(\a\xi_1^3-\xi_1\xi_2^2\r).$$
We remark that $\mcal P$ has only one critical point, namely the origin. Moreover, the vector $\eta_0$ defined in \eqref{eta} depends on $\a$ and it vanishes for a unique choice of $\a=\a_0$. A direct computation shows that the equation
$\nabla\mcal P(\xi)=\eta_0(\a)$ has two stable solutions for $\a\ne\a_0$.
\end{example}\

\begin{remark}$ $\\
Theorem \ref{molt} deals with the case when the order of degeneracy of the function $\mcal F$ at the origin is $3$ in the sense of Definition \ref{ammis}. If $\mcal F$ has an higher order of degeneracy (as in Definition \ref{ammis2}) the situation becomes more delicate. Indeed, in this case the vector $\eta_0$ defined in \eqref{eta} does not depend on the potential $V$ and, moreover, if the domain $\O$ is simply connected the vector $\eta_0$ is surprisingly equal to zero (see Lemma \ref{semplcon}). On the other hand, if $\O$ is multiply connected, $\eta_0$ can be different from zero and multiplicity of solutions blowing-up at the same point can be proved (see Theorem \ref{molt2}). This case will be studied in detail in Section 6.
\end{remark}\

Actually, the number of solutions blowing-up at one point is strongly related with the number of solutions of the equation \eqref{sc}, namely if we want more than one solution to \eqref{liou} we need multiple solutions to the equation \eqref{sc}. We point out that if $\eta_0=0$ the equation $\n\mcal P(\xi)=0$ has a unique solution because of Definition \ref{ammis}. Then, we need to assume $\eta_0\not=0$ and suitable conditions on $\mcal P $ which guarantee the existence of multiple solutions.\\
A classical tool to compute the number of solutions of a finite-dimensional equation is the topological degree: if the degree in absolute value is greater or equal than $2$, then multiplicity of solutions is ensured. In Proposition \ref{grado} we will compute the degree of $\n\mcal P$ via the number of nodal lines of $\mcal P$, a result which we believe is also interesting in itself. In particular, if $\mcal P$ has three distinct lines of zeros then equation \eqref{sc} has two distinct solutions.\\
Clearly one may have multiplicity of solutions even if the degree is $0$ or $\pm1$, but in this case this may depend on the constant term $\eta_0$, and it can also be more difficult to be verified.\\
The computation of the degree of $\n\mcal P$ gives the following corollary to Theorem \ref{molt}.

\begin{corollary}$ $\\
Assume the polynomial map $\mcal P$ (see Definition \ref{ammis}) has three different nodal lines and $\eta_0\ne0$.\\
Then, there exist two families of solutions $\l\{u_\rho^1\r\},\l\{u_\rho^2\r\}$ to \eqref{liou} for $\rho\in(0,\rho_0)$, both blowing-up at $0$ as $\rho$ goes to $0$ and such that $u_\rho^1\not\equiv u_\rho^2$
\end{corollary}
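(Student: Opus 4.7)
The corollary is obtained by applying Theorem \ref{molt} with $K=2$: it suffices to produce two distinct stable solutions of equation \eqref{sc} under the given hypotheses.

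The central tool is Proposition \ref{grado}, which expresses the Brouwer degree of $\n\mcal P$ on a large disk in terms of the number of nodal lines of $\mcal P$. Under the hypothesis that $\mcal P$ has three distinct nodal lines, Proposition \ref{grado} yields $|\deg(\n\mcal P,B_R(0),\eta_0)|=2$ for $R$ sufficiently large and every $\eta_0\ne 0$. Since $\mcal P$ is homogeneous of degree three with only the origin as a critical point (Definition \ref{ammis}), the map $\n\mcal P$ is a proper homogeneous map of degree two, so the preimage of the fixed value $\eta_0\ne 0$ is a finite set of nonzero points whose local Brouwer degrees sum, by excision, to an integer of absolute value $2$.

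The remaining step is to check that this total degree is genuinely distributed over two distinct points each with nonzero local degree, rather than being concentrated at a single degenerate solution of local degree $\pm 2$. After a linear change of coordinates bringing $\mcal P$ to the normal form of the model Example, an explicit computation reduces $\n\mcal P(\xi)=\eta_0$ to a biquadratic equation in one of the squared coordinates which, since $\eta_0\ne 0$, admits a unique positive root; this produces exactly two distinct real solutions $\xi^1\ne\xi^2$, each of which is a simple root of $\n\mcal P-\eta_0$ (local degree $\pm 1$). By the observation following Definition \ref{stable}, nonzero local degree implies stability, so $\xi^1,\xi^2$ are two stable solutions of \eqref{sc} and Theorem \ref{molt} applied with $K=2$ yields the two distinct families $\{u_\rho^1\},\{u_\rho^2\}$ of blowing-up solutions. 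The only delicate step specific to the corollary is ruling out the coalescence of the two roots into a single solution of local degree $\pm 2$, and the three-nodal-line hypothesis together with $\eta_0\ne 0$ is precisely what forces the normal-form reduction above to produce two simple real roots.
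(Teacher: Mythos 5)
Your proof is correct and, at the top level, follows the paper's route exactly: apply Theorem \ref{molt} with $K=2$ after using Proposition \ref{grado} to compute $\deg(\n\mcal P,B_R(0),\eta_0)=1-M=-2$. The genuine difference lies in how you then extract two \emph{stable} solutions of $\n\mcal P(\xi)=\eta_0$. The paper (inside the $N=2$ case of Proposition \ref{grado}) gets the second solution for free from parity: $\n\mcal P$ is homogeneous of even degree, hence even, so $\n\mcal P(\xi)=\eta_0$ implies $\n\mcal P(-\xi)=\eta_0$, and $\eta_0\ne0$ forces $\xi\ne0$, hence $\xi\ne-\xi$. You instead bring $\mcal P$ to the model cubic by a $GL(2)$ change of coordinates and solve $\n\mcal P=\eta_0$ explicitly, obtaining a biquadratic with a unique positive root and exactly two simple zeros. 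Your route is computationally heavier (and, as written, silently assumes the transported $\eta_0$ has nonzero second component --- the axis cases $b=0$ need a one-line separate check), but it has the virtue of making the non-degeneracy, and therefore the stability, of both solutions explicit, a point the paper's parity argument treats more tersely. A coordinate-free way to capture exactly what your computation shows: for a cubic with three distinct real nodal lines, the Hessian determinant $\det D^2\mcal P(\xi)$ is a negative-definite quadratic form (this is a $GL(2)$-invariant sign statement, immediate from $\det D^2(\xi_1^3-\xi_1\xi_2^2)=-12\xi_1^2-4\xi_2^2$), so every solution of $\n\mcal P=\eta_0$ with $\eta_0\ne0$ is non-degenerate with local degree $-1$; the total degree $-2$ then forces exactly two solutions, both stable, with no normal-form reduction needed.
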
\

The homogeneous polynomial map $\mcal P$ associated to the third derivatives of $\mcal F$ plays a crucial role, much like $\mcal F$ itself did in the original construction in \cite{dkm,egp}, which uses similar techniques to the present paper. Indeed, in those papers blowing-up solutions to \eqref{liou} have been constructed using a \emph{Lyapunov-Schmidt reduction}: once the main order term $\mrm PU$ of the solution is prescribed as the projection of the \emph{bubble} centered at some $ \xi_\rho\in\O$ having the profile of entire solutions to Liouville equations (see \eqref{u}), we find a suitable remainder $ \psi_\rho$ satisfying $\|\psi_\rho\|_{H^1_0(\O)}\to0$ as ${\rho\to0}$ in such a way that $u=\mrm PU+\psi_\rho$ solves \eqref{liou}. To this purpose, the choice of the point $\xi_\rho$ is crucial. In particular, $\xi_\rho\to0$ as $\rho\to0$ and $\n\mcal F(0)=0$.\\
In this paper, our goal is to find a second order condition to be satisfied by the point $\xi_\rho$. More precisely, we show that $\xi_\rho=\xi_0\rho\sqrt{\log\frac1\rho}$ and $\xi_0$ solves the equation \eqref{sc}.\\
In order to find the second order condition \eqref{sc}, we need to improve the first order approximation term of the solution adding two higher order correction terms. More precisely, we will look for a solution like
$$u(x)=\mrm PU(x)+\rho^2\tau^2\l(\mrm P\wh w\l(\frac{x-\xi}{\rho\tau}\r)+\wt W(x)\r)+\phi_\rho(x),$$
where the first order term $\mrm PU$ is as usual the projection of the standard bubble (see definitions \eqref{u} and \eqref{pr}). The refinement of our ansatz is given by the new functions $\mrm P\wh w$ (introduced in Subsection \ref{sec_pw}) and $\wt W$ (introduced in Subsection \ref{sec_w}) which give a \emph{local} and a \emph{global} correction, respectively. The remainder $\phi_\rho$ is much smaller than the previous remainder term $\psi_\rho$, hence it can be ignored in computing the equation for $\xi_\rho$. On the other hand, the correction terms $\mrm P\wh w$ and $\wt W$ are not negligible and they originate the vector $\eta_0$ in the equation, as we will see in the following sections.\\

We point out that our result is inspired by non-uniqueness results obtained in \cite{gro,gn} for the Schr\"odinger equation on the whole space and on bounded domains with Neumann conditions, where a second order expansion of the concentration point is performed. However, in those cases the situation is much simpler, because a refinement of the ansatz is not required, while in the present case is absolutely necessary.\\

The structure of the paper is as follows.\\
In Section 2 we define the leading term and the correction terms of the solution and we show some asymptotic expansions; in Section 3 we provide an estimate of the error term and in Section 4 we study the invertibility of the linearized operator. In Section 5 we solve the auxiliary finite-dimensional problem and conclude the proof of the main theorem, whereas in Section 6 we discuss how to relax the assumption given in Definition \ref{ammis} on $V$. Finally, in the Appendix we compute the degree of the map \eqref{p}.\\

\section{Ansatz of the approximate solution}\label{s2}\

In this section we will give an \emph{ansatz} for our solutions, namely we precisely describe the profile of the solutions to \eqref{liou} we are looking for.\\

First of all, let us introduce the \emph{standard bubbles} for the Liouville equation, which are the main object in the study of the blow up analysis for \eqref{liou}. For $\rho,\tau>0$ and $x,\xi\in\R^2$, define
\begin{equation}\label{u}
U(x)=\log\frac{8\rho^2\tau^2}{\l(\rho^2\tau^2+|x-\xi|^2\r)^2};
\end{equation}
notice that they solve an equivalent problem to \eqref{liou} on the whole plane, namely
\begin{equation}\label{eqliou}
\l\{\begin{array}{ll}-\D U=e^U&\text{in }\R^2\\\int_{\R^2}e^U=8\pi<+\infty\end{array}\r..
\end{equation}
Actually, these are the only solutions of the previous equation with finite mass, as shown by \cite{chenli}.\\

The main order term in the ansatz will be the bubble $U$, for some $\tau=\tau_\rho$, satisfying $\frac1C\le\tau\le C$ for some $C>0$; it will be precisely defined later in the section. However, as we mentioned in the introduction, we also need some correction terms in order to have a sharper approximation. Our ansatz is the following,
$$\boxed{u(x)=W(x)+\phi(x)=\mrm PU(x)+\zeta_1(x)+\zeta_2(x)+\phi(x)},$$
where $\phi$ is a small order term and $\zeta_1,\zeta_2$ are suitable corrections which will be discussed in this section.\\
Correction terms will be related to the following \emph{two-variable functional}, depending on $x,\xi\in\O$:
\begin{equation}\label{e}
\mcal E(x,\xi)=e^{8\pi(H(x,\xi)-H(\xi,\xi))+\log\frac{V(x)}{V(\xi)}}-1.
\end{equation}
We will need the behavior of $\mcal E$ and its derivatives at the diagonal $x=\xi$ around the origin. The proof is based on elementary Taylor expansions and is therefore omitted.

\begin{lemma}\label{de}$ $\\
Let $\mcal E$, $\mcal F$ and $\mcal P$ be defined by \eqref{e}, \eqref{f} and \eqref{p}, respectively. Then, $\mcal E(x,\xi)$ verifies, as $\xi$ goes to $0$:
\begin{eqnarray*}
\mcal E(\xi,\xi)&=&0;\\
\pa_{x_i}\mcal E(\xi,\xi)&=&4\pi\pa_{\xi_i}\mcal F(\xi)=\frac1{2\pi}\pa_{\xi_i}\mcal P(\xi)+O\l(\l|\xi\r|^3\r)=O\l(|\xi|^2\r);\\
\pa^2_{x_ix_j}\mcal E(\xi,\xi)&=&-8\pi\pa^2_{x_iy_j}H(0,0)+O(|\xi|);\\
\pa^3_{x_ix_jx_k}\mcal E(\xi,\xi)&=&8\pi\pa^3_{x_ix_jx_k}H(0,0)+\pa^3_{x_ix_jx_k}\log V(0)+O(|\xi|).
\end{eqnarray*}
In particular, $\mcal E(x,\xi)=O\l(|\xi|^2|x-\xi|+|x-\xi|^2\r)$.
\end{lemma}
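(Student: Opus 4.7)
The plan is to set $f(x,\xi):=8\pi(H(x,\xi)-H(\xi,\xi))+\log V(x)-\log V(\xi)$, so that $\mcal E(x,\xi)=e^{f(x,\xi)}-1$ with $f$ smooth and identically vanishing on the diagonal $x=\xi$. The identity $f(\xi,\xi)=0$ immediately yields $\mcal E(\xi,\xi)=0$, while every $x$-derivative of $\mcal E$ restricted to the diagonal reduces, via the chain rule (Fa\`a di Bruno's formula) for $e^f$, to a polynomial combination of $x$-derivatives of $f$ at $(\xi,\xi)$. The key structural ingredients will then be the swap symmetry $H(x,y)=H(y,x)$ of the regular part of Green's function and the admissibility hypotheses $\pa_{\xi_i}\mcal F(0)=\pa^2_{\xi_i\xi_j}\mcal F(0)=0$, which play progressively stronger roles at first and second order.

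For the first derivative, $\pa_{x_i}\mcal E(\xi,\xi)=\pa_{x_i}f(\xi,\xi)=8\pi\,\pa_{x_i}H(x,\xi)|_{x=\xi}+\pa_{x_i}\log V(\xi)$. Differentiating $H(x,y)=H(y,x)$ gives $\pa_{x_i}H(x,\xi)|_{x=\xi}=\pa_{y_i}H(\xi,y)|_{y=\xi}$, hence $2\,\pa_{x_i}H(x,\xi)|_{x=\xi}=\pa_{\xi_i}[H(\xi,\xi)]$, and combined with the definition of $\mcal F$ this produces $\pa_{x_i}\mcal E(\xi,\xi)=4\pi\,\pa_{\xi_i}\mcal F(\xi)$. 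A Taylor expansion of $\pa_{\xi_i}\mcal F$ around $0$ then eliminates the first two terms by admissibility, leaving the quadratic piece $2\pi\sum_{j,k}\pa^3_{\xi_i\xi_j\xi_k}\mcal F(0)\,\xi_j\xi_k+O(|\xi|^3)$, which is exactly $\frac1{2\pi}\pa_{\xi_i}\mcal P(\xi)+O(|\xi|^3)$ by the normalization in \eqref{p}.

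The second derivative is the delicate step. Leibniz' rule gives $\pa^2_{x_ix_j}\mcal E(\xi,\xi)=\pa_{x_i}f(\xi,\xi)\,\pa_{x_j}f(\xi,\xi)+\pa^2_{x_ix_j}f(\xi,\xi)$, the first summand being $O(|\xi|^4)$ by the previous step. The second equals $8\pi\,\pa^2_{x_ix_j}H(0,0)+\pa^2_{x_ix_j}\log V(0)+O(|\xi|)$ in terms of the \emph{pure} first-variable derivatives of $H$, which must then be converted into the mixed derivatives of the statement. Differentiating the identity $2\,\pa_{x_i}H(x,\xi)|_{x=\xi}=\pa_{\xi_i}[H(\xi,\xi)]$ once more in $\xi_j$ produces $\pa^2_{x_ix_j}H(0,0)+\pa^2_{x_iy_j}H(0,0)=\frac12\pa^2_{\xi_i\xi_j}[H(\xi,\xi)]|_{0}$; meanwhile, the degeneracy $\pa^2_{\xi_i\xi_j}\mcal F(0)=0$ pins the right-hand side down to $-\frac1{8\pi}\pa^2_{x_ix_j}\log V(0)$. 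Substituting into the pure-derivative expression above makes the $\log V$ contributions cancel, leaving $-8\pi\,\pa^2_{x_iy_j}H(0,0)+O(|\xi|)$.

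The third derivative is straightforward bookkeeping: in the chain-rule expansion of $\pa^3_{x_ix_jx_k}e^f$, every term containing at least one factor $\pa_{x_\ell}f(\xi,\xi)$ is $O(|\xi|^2)$ by the first step, so only $\pa^3_{x_ix_jx_k}f(\xi,\xi)=8\pi\,\pa^3_{x_ix_jx_k}H(0,0)+\pa^3_{x_ix_jx_k}\log V(0)+O(|\xi|)$ survives. Finally the pointwise estimate $\mcal E(x,\xi)=O(|\xi|^2|x-\xi|+|x-\xi|^2)$ follows from a first-order Taylor expansion of $x\mapsto\mcal E(x,\xi)$ at the basepoint $x=\xi$, using $\mcal E(\xi,\xi)=0$, the $O(|\xi|^2)$ bound on $\pa_{x_i}\mcal E(\xi,\xi)$ from the first item, and uniform boundedness of the Hessian of $\mcal E$ near $(0,0)$. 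The only genuine obstacle is the second-order identity above, where the full force of Definition \ref{ammis} is used; everything else is routine Taylor-and-chain-rule bookkeeping.
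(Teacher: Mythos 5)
Your proposal is correct, and it coincides with the route the paper itself indicates: the paper explicitly says the proof is "based on elementary Taylor expansions" and omits it, and your argument is precisely that filled-in computation. You correctly identify the only non-routine step, namely that the second-order admissibility condition $\pa^2_{\xi_i\xi_j}\mcal F(0)=0$ is what lets one trade the pure derivative $\pa^2_{x_ix_j}H(0,0)$ for $-\pa^2_{x_iy_j}H(0,0)$ with the $\log V$ contribution cancelling; the other items, including the conversion $\pa_{\xi_i}[H(\xi,\xi)]=2\pa_{x_i}H(\xi,\xi)$ via the swap symmetry of $H$ and the normalization giving $4\pi\pa_{\xi_i}\mcal F=\tfrac1{2\pi}\pa_{\xi_i}\mcal P+O(|\xi|^3)$, all check out.
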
\

\subsection{The local correction}\label{sec_pw}

Here we introduce the first correction $\zeta_1$ which basically plays a role in a small neighborhood of $\xi$. For this reason we call it the $local$ correction.\\

The second derivatives of $\mcal E$ appear in the first correction term, which solves an entire linear PDEs related to \eqref{eqliou}:
$$-\D\wh w(y)-\frac8{\l(1+|y|^2\r)^2}\wh w(y)=\frac4{\l(1+|y|^2\r)^2}\l\langle D^2_{xx}\mcal E(\xi,\xi),y,y\r\rangle.$$
Of course this equation will have many solutions and we will need some \qm{special} ones: we split it into three as
\begin{equation}\label{eqwh}
\wh w=\frac{\pa^2_{x_1x_1}\mcal E(\xi,\xi)+\pa^2_{x_2x_2}\mcal E(\xi,\xi)}2\wh w_1+\frac{\pa^2_{x_1x_1}\mcal E(\xi,\xi)-\pa^2_{x_2x_2}\mcal E(\xi,\xi)}2\wh w_2+\l(2\pa^2_{x_1x_2}\mcal E(\xi,\xi)\r)\wh w_3,
\end{equation}
with each $\wh w_i$ solving an equation involving only the radial term or one of the non-radial terms:
\begin{eqnarray}
\label{eqw1}-\D\wh w_1(y)-\frac8{\l(1+|y|^2\r)^2}\wh w_1(y)&=&\frac{4|y|^2}{\l(1+|y|^2\r)^2};\\
\nonumber-\D\wh w_2(y)-\frac8{\l(1+|y|^2\r)^2}\wh w_2(y)&=&\frac{4\l(y_1^2-y_2^2\r)}{\l(1+|y|^2\r)^2};\\
\nonumber-\D\wh w_3(y)-\frac8{\l(1+|y|^2\r)^2}\wh w_3(y)&=&\frac{4y_1y_2}{\l(1+|y|^2\r)^2}.
\end{eqnarray}
As for the latter two equations, we can choose two explicit bounded solutions given by:
\begin{equation}\label{w2w3}
\wh w_2(y):=\frac{y_1^2-y_2^2}{1+y_1^2+y_2^2};\q\q\q\q\q\q\wh w_3(y):=\frac{y_1y_2}{1+y_1^2+y_2^2}.
\end{equation}
On the other hand, solutions to \eqref{eqw1} have no explicit form, but radial solutions can be found using a variation of constants method for ODEs; they are not bounded but they may have a logarithmic control at infinity, as the following lemma shows.\\
The choice of this correction is similar to \cite{emp} (Lemma 2.1), but in our case the forcing term is not in $L^1$. For this reason, the asymptotic behavior is different and some estimates are more delicate. As we will see later, $\wh w_1$ determines the main order term in the behavior of $\wh w$, which will be crucial.

\begin{lemma}\label{w1}$ $\\
The following O.D.E.
\begin{equation}\label{ode}
-\wh w_1''(r)-\frac{\wh w_1'(r)}r-\frac8{\l(1+r^2\r)^2}\wh w_1(r)=\frac{4r^2}{\l(1+r^2\r)^2}\q\q\q r\in(0,+\infty)
\end{equation}
has a unique solution being smooth as $r$ goes to $0$ and additionally satisfying, as $r$ goes to $+\infty$,
\begin{equation}\label{w1inf}
\wh w_1(r)=-2\log^2r+4\log r+O\l(\frac{\log^2r}{r^2}\r).
\end{equation}
\end{lemma}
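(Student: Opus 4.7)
The plan is to treat \eqref{ode} as a standard linear second-order ODE with a regular singular point at $r=0$, build an explicit basis of homogeneous solutions, apply variation of parameters, and then cope with the non-integrability of the forcing at infinity via a careful asymptotic analysis.

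First, the radial homogeneous operator $L\wh w:=-\wh w''-\wh w'/r-\frac{8}{(1+r^2)^2}\wh w$ admits the explicit smooth solution $z_0(r)=\frac{1-r^2}{1+r^2}$, which is the radial trace of the scaling element in the kernel of the full linearized Liouville operator. Since the coefficient of $\wh w'$ is $1/r$, Abel's formula forces the Wronskian of two independent solutions to equal $c/r$, and the reduction-of-order ansatz $\tilde z_0=z_0 v$ leads to an elementary integral whose partial-fraction decomposition (in the variable $u=r^2$) produces the second independent solution
\[
\tilde z_0(r)=\frac{(1-r^2)\log r}{1+r^2}+\frac{2}{1+r^2},
\]
which is smooth on $(0,+\infty)$ but carries a logarithmic singularity at the origin and behaves like $-\log r$ at infinity.

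Next, variation of parameters produces the explicit formula
\[
\wh w_1(r)=z_0(r)\int_0^r\frac{4s^3\tilde z_0(s)}{(1+s^2)^2}\,ds-\tilde z_0(r)\int_0^r\frac{4s^3z_0(s)}{(1+s^2)^2}\,ds+c\,z_0(r),
\]
with $c\in\R$ a free constant. Smoothness at $r=0$ is automatic: both integrands are $O(s^3|\log s|)$ near the origin, which more than compensates the logarithmic singularity of $\tilde z_0$ after multiplication, so that each of the two explicit terms vanishes like $O(r^4\log r)$ at $0$. Hence the solutions smooth at the origin form exactly the one-parameter family parametrized by $c$. Uniqueness will then be enforced by the requirement that no additive $O(1)$ constant remain in the expansion at infinity: since $z_0(r)\to-1$, adding $c\,z_0$ shifts the constant term by $-c$, so at most one value of $c$ can achieve this.

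The main obstacle, and the bulk of the work, is the asymptotic analysis of the two integrals as $r\to+\infty$. The forcing behaves like $4/s$ at infinity and is not integrable, so the second integral diverges like $-4\log r$ and the first like $-2\log^2 r$; combined with the $-\log r$ growth of $\tilde z_0$ and the $-1$ limit of $z_0$, the $\log^2 r$ contributions do not cancel but instead reinforce to give $-2\log^2 r$. Concretely, the second integral admits the closed-form primitive
\[
\int\frac{4s^3(1-s^2)}{(1+s^2)^3}\,ds=-2\log(1+s^2)-\frac{6}{1+s^2}+\frac{2}{(1+s^2)^2}+\mrm{const},
\]
and one transfers the extra $\log s$ in the first integrand onto this primitive via an integration by parts, reducing everything to integrals of $F(s)/s$ with $F$ explicit. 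Expanding $z_0(r)=-1+\frac{2}{1+r^2}$ and $\tilde z_0(r)=-\log r+O(\log r/r^2)$ and tracking all error terms carefully yields
\[
\wh w_1(r)=-2\log^2 r+4\log r+\mrm{const}+O\l(\frac{\log^2 r}{r^2}\r),
\]
and choosing the unique $c$ that kills the leftover constant produces both \eqref{w1inf} and the uniqueness claim.
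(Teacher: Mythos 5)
Your proof is correct and follows essentially the same route as the paper's: both start from the known homogeneous solution $w_0(r)=\frac{1-r^2}{1+r^2}$, represent $\wh w_1$ by variation of constants, expand the resulting integrals for large $r$ to obtain the $-2\log^2 r+4\log r$ leading behavior, and then fix the free multiple of $w_0$ to annihilate the residual $O(1)$ term. The only difference is cosmetic: you write out the second homogeneous solution $\tilde z_0$ explicitly and apply textbook variation of parameters, whereas the paper compresses the same computation into a single reduction-of-order formula via the auxiliary function $\Phi$ (following \cite{emp,ggw}) without ever naming $\tilde z_0$.
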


\begin{proof}$ $\\
Solutions to \eqref{ode} can be found using a standard variation of constants: since $w_0(r)=\frac{1-r^2}{1+r^2}$ solves the homogeneous equation
%\begin{equation}\label{omo}
$$-w_0''(r)-\frac{w_0'(r)}r-\frac8{\l(1+r^2\r)^2}w_0(r)=0,$$
%\end{equation}
then all solutions are given, for some $C\in\R$, by
\begin{eqnarray*}
w(r)&=&-w_0(r)\l(\int_0^r\frac{\Phi(s)-\Phi(1)}{(1-s)^2}\mrm ds+\Phi(1)\frac r{r-1}+C\r),
\end{eqnarray*}
with
$$\Phi(s):=\frac{(1-s)^2}{sw_0(s)^2}\int_0^stw_0(t)\frac{4t^2}{\l(1+t^2\r)^2}\mrm dt,$$
%despite this expression may not be defined at $r=1$, such a $w$ can be smoothly extended to a solution for any $r>0$
extended by continuity in $s=1$; for details about the formula above, see for instance \cite{emp}, Lemma 2.1 and \cite{ggw}, Lemma 3.5.\\
%From the previous formula, one sees that smoothness at $r=0$ is equivalent to the last integral being $O(s)$ as $s$ goes to $0$, namely $b=0$. On the other hand, as $s$ goes to $+\infty$, one has
As $s$ goes to $+\infty$, one has
$$\int_0^stw_0(t)\frac{4t^2}{\l(1+t^2\r)^2}\mrm dt=4\int_0^s\frac{t^3\l(1-t^2\r)}{\l(1+t^2\r)^3}\mrm dt=-4\log s+4+O\l(\frac1{s^2}\r),$$
therefore
\begin{eqnarray*}
w(r)&=&\l(1+O\l(\frac1{r^2}\r)\r)\l(\int_0^r\l(\frac1s+O\l(\frac1{s^3}\r)\r)\l(-4\log s+4+O\l(\frac1{s^2}\r)\r)+C\r)\\
&=&\l(1+O\l(\frac1{r^2}\r)\r)\l(\int_0^r\l(-4\frac{\log s}s+\frac4s+O\l(\frac{\log s}{s^3}\r)\r)\mrm ds+C\r)\\
&=&-2\log^2r+\log r+C_0+C+O\l(\frac{\log^2r}{r^2}\r).
\end{eqnarray*}
%We are left with showing $C_a=0$ for a unique $a$. To see this we notice that, given one solution $w^a$ of the previous type, any other solution and $w$ will be obtained by adding some solution to the homogeneous equation \eqref{omo} being smooth at $r=0$, namely $w=w^a+Cw_0$ for some $C>0$; since $\lim_{r\to+\infty}w_0(r)=-1$, then as $r$ goes to $+\infty$ one has
%$$w(r)=-2\log^2r+4\log r+4C_a-C+O\l(\frac{\log^2r}{r^2}\r).$$
Therefore, $C=-C_0$ is the unique value for which the asymptotic behavior is as we wanted, hence we get the unique solution $\wh w_1=w$ with the desired properties.
\end{proof}\

We will consider the rescalement $\wh w\l(\frac{x-\xi}{\rho\tau}\r)$, which concentrates at $x=\xi$ as $\rho$ goes to $0$; for this reason, we will refer to $\wh w$ as the \emph{local} correction. Notice that $\wh w\l(\frac{x-\xi}{\rho\tau}\r)$ solves
$$-\D\Bigg(\wh w\l(\frac{x-\xi}{\rho\tau}\r)\Bigg)-e^{U(x)}\wh w\l(\frac{x-\xi}{\rho\tau}\r)=e^{U(x)}\frac1{2\rho^2\tau^2}\l\langle D^2_{xx}\mcal E(\xi,\xi),x-\xi,x-\xi\r\rangle;$$
moreover, in view of the asymptotic behavior of $\wh w_1$ and the boundedness of $\wh w_2,\wh w_3$, we also have $\wh w\l(\frac{x-\xi}{\rho\tau}\r)=O\l(\log^2\frac1\rho\r)$ on $\ol\O$.\\
Since we look for solutions to \eqref{liou} vanishing on $\pa\O$, we need to project also this correction on $H^1_0(\O)$, via the map $\mrm P:H^1(\O)\to H^1_0(\O)$ given by:
\begin{equation}\label{pr}
\l\{\begin{array}{ll}-\D(\mrm Pu)=-\D u&\text{in }\O\\\mrm Pu=0&\text{on }\O\end{array}\r..
\end{equation}

\begin{lemma}\label{wh}$ $\\
Let $\wh w(y)$ be defined by \eqref{eqwh}, \eqref{w2w3} and Lemma \ref{w1} and $\mrm P$ be defined by \eqref{pr}.\\
Then, as $x$ goes to $\xi$, it satisfies
\begin{eqnarray*}
\mrm P\wh w\l(\frac{x-\xi}{\rho\tau}\r)&=&\wh w\l(\frac{x-\xi}{\rho\tau}\r)-32\pi^2\log\frac1\rho\l(\pa^2_{x_1y_1}H(0,0)+\pa^2_{x_2y_2}H(0,0)\r)\langle\n_xH(0,0),x-\xi\rangle\\
&-&\l(\pa^2_{x_1x_1}\mcal E(\xi,\xi)+\pa^2_{x_2x_2}\mcal E(\xi,\xi)\r)\l(4\pi\l(1-\log\frac1{\rho\tau}\r)H(\xi,\xi)+\l(2-\log\frac1{\rho\tau}\r)\log\frac1{\rho\tau}\r)\\
&+&\l(\pa^2_{x_1x_1}\mcal E(\xi,\xi)+\pa^2_{x_2x_2}\mcal E(\xi,\xi)\r)L_1(\xi,\xi)-\frac{\pa^2_{x_1x_1}\mcal E(\xi,\xi)-\pa^2_{x_2x_2}\mcal E(\xi,\xi)}2L_2(\xi,\xi)\\
&-&2\pa^2_{x_1x_2}\mcal E(\xi,\xi)L_3(\xi,\xi)+O\l(\rho^2\log^2\frac1\rho+|x-\xi|+\l(\log\frac1\rho\r)|\xi||x-\xi|+\l(\log\frac1\rho\r)|x-\xi|^2\r),
\end{eqnarray*}
where $L_i(x,\xi)$ is respectively the solution to
\begin{eqnarray*}
&&\l\{\begin{array}{ll}-\D L_1(x,\xi)=0&x\in\O\\L_1(x,\xi)=\log^2|x-\xi|&x\in\pa\O\end{array}\r.\\
&&\l\{\begin{array}{ll}-\D L_2(x,\xi)=0&x\in\O\\L_2(x,\xi)=\frac{(x_1-\xi_1)^2-(x_2-\xi_2)^2}{|x-\xi|^2}&x\in\pa\O\end{array}\r.\\
&&\l\{\begin{array}{ll}-\D L_3(x,\xi)=0&x\in\O\\L_3(x,\xi)=\frac{(x_1-\xi_1)(x_2-\xi_2)}{|x-\xi|^2}&x\in\pa\O\end{array}\r.
\end{eqnarray*}
\end{lemma}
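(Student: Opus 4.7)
The plan is to exploit the defining property of the projection: $\mrm P\wh w\l(\tfrac{\cdot-\xi}{\rho\tau}\r)-\wh w\l(\tfrac{\cdot-\xi}{\rho\tau}\r)$ is harmonic in $\O$ with boundary trace $-\wh w\l(\tfrac{x-\xi}{\rho\tau}\r)$ for $x\in\pa\O$, so the task reduces to identifying an explicit harmonic extension of that trace and Taylor-expanding the result around $x=\xi$. Following the decomposition \eqref{eqwh}, I would treat separately the three pieces with coefficients $c_1=\tfrac{\pa^2_{x_1x_1}\mcal E(\xi,\xi)+\pa^2_{x_2x_2}\mcal E(\xi,\xi)}{2}$, $c_2=\tfrac{\pa^2_{x_1x_1}\mcal E(\xi,\xi)-\pa^2_{x_2x_2}\mcal E(\xi,\xi)}{2}$, $c_3=2\pa^2_{x_1x_2}\mcal E(\xi,\xi)$.

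The decisive tool is the identity $\log|x-\xi|=2\pi H(x,\xi)$ for $x\in\pa\O$, an immediate consequence of $G(x,\xi)=0$ on $\pa\O$ combined with the decomposition defining $H$. For $x\in\pa\O$ the argument $r=\tfrac{|x-\xi|}{\rho\tau}$ is of order $\tfrac{1}{\rho}$, so substituting the asymptotic expansion of Lemma \ref{w1} and then applying the boundary identity yields a boundary datum in which every piece except $-2\log^2|x-\xi|$ is the trace of an explicit harmonic function in $\O$: $H(\cdot,\xi)$ is harmonic in its first variable (a direct verification from $G=-\tfrac{1}{2\pi}\log|\cdot-\xi|+H$), and the remaining summands are constants. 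The non-harmonic piece $-2\log^2|x-\xi|$ has harmonic extension $-2L_1(x,\xi)$ by the definition of $L_1$. For $\wh w_2,\wh w_3$ I would use the explicit formulas \eqref{w2w3}: on $\pa\O$ their traces agree with those of $L_2,L_3$ up to $O(\rho^2)$, so their harmonic extensions match $L_2(x,\xi)$ and $L_3(x,\xi)$ to the same order.

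Subtracting the combined harmonic extension from $\wh w\l(\tfrac{x-\xi}{\rho\tau}\r)$ yields $\mrm P\wh w$ in closed form in terms of $c_i$, $L_i(x,\xi)$, $H(x,\xi)$, and powers of $\log\tfrac{1}{\rho\tau}$. A Taylor expansion around $x=\xi$ then produces the stated formula: $L_i(x,\xi)=L_i(\xi,\xi)+O(|x-\xi|)$, $H(x,\xi)=H(\xi,\xi)+\langle\n_xH(\xi,\xi),x-\xi\rangle+O(|x-\xi|^2)$, together with $c_1=-4\pi(\pa^2_{x_1y_1}H+\pa^2_{x_2y_2}H)(0,0)+O(|\xi|)$ and $\n_xH(\xi,\xi)=\n_xH(0,0)+O(|\xi|)$ (both from Lemma \ref{de}), and $\log\tfrac{1}{\rho\tau}=\log\tfrac{1}{\rho}+O(1)$. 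The emblematic linear term $-32\pi^2\log\tfrac{1}{\rho}(\pa^2_{x_1y_1}H+\pa^2_{x_2y_2}H)(0,0)\langle\n_xH(0,0),x-\xi\rangle$ arises precisely from the contribution $8\pi c_1 H(x,\xi)\log\tfrac{1}{\rho\tau}$ after this first-order expansion.

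The main obstacle is careful tracking of the three error sources appearing in the claimed bound: the $O(\rho^2\log^2\tfrac{1}{\rho})$ residue from the $O(\tfrac{\log^2r}{r^2})$ tail in Lemma \ref{w1} evaluated at $r\asymp\tfrac{1}{\rho}$, propagated into $\O$ through the harmonic extension via the maximum principle; the $O(|x-\xi|)$ and $O(\log\tfrac{1}{\rho}|x-\xi|^2)$ terms from Taylor-expanding the $L_i$ and $H$ in $x$ around $\xi$; and the $O(\log\tfrac{1}{\rho}|\xi||x-\xi|)$ term from freezing $c_1$ and $\n_xH$ at the origin rather than at $\xi$. The maximum principle ensures that each boundary error transmits linearly into the interior, making the bookkeeping manageable though somewhat delicate.
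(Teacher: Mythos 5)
Your proposal is correct and follows essentially the same route as the paper's proof: write $\mrm P\wh w-\wh w$ as the harmonic extension of the boundary trace, use the asymptotics from Lemma \ref{w1} together with the boundary identity $\log|x-\xi|=2\pi H(x,\xi)$ on $\pa\O$ and the interior harmonicity of $H(\cdot,\xi)$ and of the $L_i$, then Taylor-expand about $x=\xi$ and freeze $\pa^2_{xx}\mcal E(\xi,\xi)$ and $\n_xH$ at the origin via Lemma \ref{de}. The only difference is cosmetic: you make explicit the identity $\log|x-\xi|=2\pi H(x,\xi)$ on $\pa\O$, which the paper invokes implicitly under \qm{the maximum principle}.
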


\begin{proof}$ $\\
From the asymptotic behavior \eqref{w1inf} we deduce that, for $x\in\partial\O$,
$$\wh w_1\l(\frac{x-\xi}{\rho\tau}\r)=-2\log^2|x-\xi|-4\l(\log\frac1{\rho\tau}\r)\log|x-\xi|-2\log^2\frac1{\rho\tau}+4\log|x-\xi|+4\log\frac1{\rho\tau}+O\l(\rho^2\log^2\frac1\rho\r);$$
therefore, from the maximum principle we get, uniformly in $\O$ as $\rho$ goes to $0$,
\begin{eqnarray*}
\mrm P\wh w_1\l(\frac{x-\xi}{\rho\tau}\r)&=&\wh w_1\l(\frac{x-\xi}{\rho\tau}\r)+2L_1(x,\xi)+\l(\log\frac1{\rho\tau}\r)8\pi H(x,\xi)+2\log^2\frac1{\rho\tau}-8\pi H(x,\xi)\\
&-&4\log\frac1{\rho\tau}+O\l(\rho^2\log\frac1\rho\r)\\
&=&\wh w_1\l(\frac{x-\xi}{\rho\tau}\r)+2L_1(\xi,\xi)+\l(\log\frac1{\rho\tau}\r)8\pi H(\xi,\xi)+8\pi\log\frac1{\rho\tau}\langle\n_xH(0,0),x-\xi\rangle\\
&+&2\log^2\frac1{\rho\tau}-8\pi H(\xi,\xi)-4\log\frac1{\rho\tau}\\
&+&O\l(\rho^2\log^2\frac1\rho+|x-\xi|+\l(\log\frac1\rho\r)|\xi||x-\xi|+\l(\log\frac1\rho\r)|x-\xi|^2\r).
\end{eqnarray*}
Similarly, from \eqref{w2w3} we get on $x\in\pa\O$,
\begin{eqnarray*}
\wh w_2\l(\frac{x-\xi}{\rho\tau}\r)&=&\frac{(x_1-\xi_1)^2-(x_2-\xi_2)^2}{|x-\xi|^2}+O\l(\rho^2\r)\\
\wh w_3\l(\frac{x-\xi}{\rho\tau}\r)&=&\frac{(x_1-\xi_1)(x_2-\xi_2)}{|x-\xi|^2}+O\l(\rho^2\r),
\end{eqnarray*}
therefore on $\O$, for $i=2,3$,
$$\mrm P\wh w_i\l(\frac{x-\xi}{\rho\tau}\r)=\wh w_i\l(\frac{x-\xi}{\rho\tau}\r)-L_i(x,\xi)+O\l(\rho^2\r)=\wh w_i\l(\frac{x-\xi}{\rho\tau}\r)-L_i(\xi,\xi)+O\l(\rho^2+|x-\xi|\r).$$
The conclusion follows by putting together the previous estimates and the asymptotic behavior of $\pa^2_{xx}\mcal E(\xi,\xi)$ from Lemma \ref{de}.
\end{proof}\

Finally we are in position to define our local correction term:
$$\boxed{\zeta_1(x)=\rho^2\tau^2\mrm P\wh w\l(\frac{x-\xi}{\rho\tau}\r)}.$$

\subsection{The global correction}\label{sec_w}

Let us now define the second correction term .\\
While $\wh w$ was introduced to compensate the effect of the second derivatives of $\mcal E$, $\zeta_2$ will deal with the other terms in the expansion of $\mcal E$. Anyway, unlike the former, it will be a solution of a PDE on the whole $\O$, rather than a concentrating rescaling of an entire solution.\\
%Roughly speaking, the reason is the following: the term containing second derivatives will be multiplying a quadratic term, with all terms being multiplied by the bubble $e^U$ which diverges like $\frac1{|x-\xi|^4}$, hence the product is a non-integrable singularity; on the other hand, the remainder is $O\l(|x-\xi|^3\r)$, therefore it is uniformly integrable when multiplied by $e^U$ hence one can pass to the limit as $\rho$ goes to $0$ without rescaling.\\
Our global correction $\zeta_2$ is defined as
$$\boxed{\zeta_2(x)=\rho^2\tau^2\wt W(x)},$$
where $\wt W$ is the solution to the following Dirichlet problem:
\begin{equation}\label{wtilde}
\l\{\begin{array}{ll}-\D\wt W(x)=\frac8{|x-\xi|^4}\l(\mcal E(x,\xi)-\l\langle\n_x\mcal E(\xi,\xi),x-\xi\r\rangle-\frac12\l\langle D^2_{xx}\mcal E(\xi,\xi),x-\xi,x-\xi\r\rangle\r)&x\in\O\\\wt W(x)=0&x\in\pa\O\end{array}\r.
\end{equation}
Notice that, from the Taylor expansion of $\mcal E$, the right-hand side is bounded by constant times $\frac1{|x-\xi|}$, therefore it belongs to $L^p(\O)$ for any $p<2$ and from standard regularity $\wt W$ is H\"older continuous.\\
We point out that the presence of such a global term is a novelty in the construction of solutions to nonlinear problems, compared for instance to \cite{gro,emp,epw,gn}.\\
The asymptotic profile of $\wt W$ near $\xi$ is given by the following lemma.

\begin{lemma}\label{wt}$ $\\
Let $\wt W(x)$ be defined by \eqref{wtilde}.\\
Then, as $x$ goes to $\xi$, it satisfies
$$\wt W(x)=\frac12\langle\n\D\log V(0),x-\xi\rangle\log\frac1{|x-\xi|}+\wt W(\xi)+O\l(|\xi||x-\xi|\log\frac1{|x-\xi|}+|x-\xi|\r).$$
\end{lemma}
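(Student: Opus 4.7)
The plan is to isolate the logarithmic singularity of $\wt W$ near $\xi$ by building an explicit local particular solution to \eqref{wtilde} and absorbing everything else in a H\"older continuous remainder. First I Taylor expand $\mcal E(x,\xi)$ in $x$ around $\xi$ one order past Lemma \ref{de}: the numerator on the right of \eqref{wtilde} equals $\frac{1}{6}\l\langle D^3_{xxx}\mcal E(\xi,\xi),r,r,r\r\rangle+O(|r|^4)$ with $r:=x-\xi$, so the source splits as $s(x)=s_0(x-\xi)+R(x)$ with
$$s_0(r):=\frac{4}{3}\frac{q(r)}{|r|^4},\qquad q(r):=\l\langle D^3\mcal E(\xi,\xi),r,r,r\r\rangle,$$
and $R\in L^\infty(\O)$.

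Next I build a local solution $\wt W_0$ to $-\D\wt W_0=s_0$ on $\R^2\sm\{0\}$. In $\R^2$ every homogeneous cubic decomposes uniquely as $q=q_3+q_1|r|^2$ with $q_3$ a harmonic cubic and $q_1$ linear; applying the Laplacian gives $q_1=\frac{1}{8}\D q=\frac{3}{4}\l\langle T,r\r\rangle$, where $T_k:=\sum_l\pa^3_{x_lx_lx_k}\mcal E(\xi,\xi)=\pa_{x_k}\D_x\mcal E(\xi,\xi)$. Two elementary computations,
$$-\D\l(\frac{q_3(r)}{8|r|^2}\r)=\frac{q_3(r)}{|r|^4},\qquad -\D\l(-\frac{1}{2}q_1(r)\log|r|\r)=\frac{q_1(r)}{|r|^2}$$
(the first using harmonicity of $q_3$ together with $\l\langle r,\n q_3\r\rangle=3q_3$, the second from $\D(\l\langle v,r\r\rangle\log|r|)=2\l\langle v,r\r\rangle/|r|^2$), suggest setting
$$\wt W_0(r):=\frac{q_3(r)}{6|r|^2}-\frac{2}{3}q_1(r)\log|r|.$$
Then $-\D\wt W_0=s_0$ and, since $q_3$ is homogeneous of degree three, $\wt W_0(r)=-\frac{1}{2}\l\langle T,r\r\rangle\log|r|+O(|r|)$ as $r\to0$; in particular $\wt W_0(0)=0$.

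I now identify $T$. Writing $\mcal E(x,\xi)=e^{\phi(x)}-1$ with $\phi(x):=8\pi(H(x,\xi)-H(\xi,\xi))+\log(V(x)/V(\xi))$ and using that $\D_x H(\cdot,\xi)\equiv0$ gives $\D_x\mcal E(x,\xi)=e^{\phi(x)}(\D\log V(x)+|\n\phi(x)|^2)$. Differentiating once more in $x$ and evaluating at $x=\xi$ — where $\phi(\xi)=0$ and $\n_x\phi|_{x=\xi}=4\pi\n_\xi\mcal F(\xi)=O(|\xi|^2)$ by admissibility — yields $T_k(\xi)=\pa_{x_k}\D\log V(\xi)+O(|\xi|^2)=\pa_{x_k}\D\log V(0)+O(|\xi|)$. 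Substituting back into the log coefficient of $\wt W_0$ produces
$$-\frac{2}{3}q_1(r)\log|r|=\frac{1}{2}\l\langle\n\D\log V(0),r\r\rangle\log\frac{1}{|r|}+O\l(|\xi||r|\log\frac{1}{|r|}\r).$$

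Finally, set $\wt W_1:=\wt W-\wt W_0(\cdot-\xi)$: by construction $-\D\wt W_1=R\in L^\infty(\O)$, and the trace of $\wt W_0(\cdot-\xi)$ on $\pa\O$ is smooth since $\mrm{dist}(\xi,\pa\O)$ stays positive. Standard elliptic regularity then gives $\wt W_1\in W^{2,p}(\O)$ for every $p<\infty$, hence $\wt W_1\in C^{1,\a}(\ol\O)$ for every $\a\in(0,1)$, and in particular $\wt W_1(x)=\wt W_1(\xi)+O(|x-\xi|)$; combined with $\wt W_0(0)=0$ this forces $\wt W(\xi)=\wt W_1(\xi)$. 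Collecting everything and absorbing the $O(|r|)$ linear-in-$r$ contributions (both $q_3(r)/(6|r|^2)$ and the first-order Taylor term of $\wt W_1$) into the error yields the expansion. The main obstacle is step two: choosing the correct harmonic / non-harmonic split of the cubic source $q$ is precisely what produces the logarithmic singularity and pins down its coefficient as $\frac{1}{2}\n\D\log V(0)$; the other steps are routine Taylor expansions and elliptic regularity.
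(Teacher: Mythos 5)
Your proof is correct and follows essentially the same strategy as the paper's: both split the right-hand side of \eqref{wtilde} into a cubic-over-$|x-\xi|^4$ part plus an $L^\infty$ remainder, construct an explicit particular solution for the singular part, and invoke elliptic regularity for the rest. The paper writes the cubic decomposition concretely in coordinates (the pieces $\frac{x_i-\xi_i}{|x-\xi|^2}$ and the two real/imaginary harmonic cubics $\Re,\Im\,(r_1+\imath r_2)^3$), while you package the same split abstractly as $q=q_3+q_1|r|^2$ with $q_3$ harmonic; your identification of the coefficient via $\D_x\mcal E = e^\phi(\D\log V+|\n\phi|^2)$ is a self-contained re-derivation of the third-derivative formula of Lemma~\ref{de} combined with harmonicity of $\n_xH$, rather than a direct citation, but arrives at the same $c_{111}+c_{122}=\frac43\pa_{x_1}\D\log V(0)+O(|\xi|)$.
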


\begin{proof}$ $\\
We can write the right-hand side of \eqref{wtilde} as $f_1+f_2$, with $f_2\in L^\infty(\O)$ and
\begin{eqnarray*}
f_1(x)&:=&\frac{c_{111}(x_1-\xi_1)^3+3c_{112}(x_1-\xi_1)^2(x_2-\xi_2)+3c_{122}(x_1-\xi_1)(x_2-\xi_2)^2+c_{222}(x_2-\xi_2)^3}{|x-\xi|^4}\\
&=&\frac34(c_{111}+c_{122})\frac{x_1-\xi_1}{|x-\xi|^2}+\frac34(c_{112}+c_{222})\frac{x_2-\xi_2}{|x-\xi|^2}\\
&+&\frac{c_{111}-3c_{122}}4\frac{(x_1-\xi_1)^3-3(x_1-\xi_1)(x_2-\xi_2)^2}{|x-\xi|^4}\\
&+&\frac{3c_{112}-c_{222}}4\frac{3(x_1-\xi_1)^2(x_2-\xi_2)-(x_2-\xi_2)^3}{|x-\xi|^4},
\end{eqnarray*}
with $c_{ijk}:=\frac43\pa^3_{x_ix_jx_k}\mcal E(\xi,\xi)$.\\
Notice that a solution to $-\D\wt W_1=f_1$ is given by
\begin{eqnarray*}
\wt W_1(x)&:=&\frac38(c_{111}+c_{122})(x_1-\xi_1)\log\frac1{|x-\xi|}+\frac38(c_{112}+c_{222})(x_2-\xi_2)\log\frac1{|x-\xi|}\\
&+&\frac{c_{111}-3c_{122}}{32}\frac{(x_1-\xi_1)^3-3(x_1-\xi_1)(x_2-\xi_2)^2}{|x-\xi|^2}\\
&+&\frac{3c_{112}-c_{222}}{32}\frac{3(x_1-\xi_1)^2(x_2-\xi_2)-(x_2-\xi_2)^3}{|x-\xi|^2};
\end{eqnarray*}
since $\wt W_1(\xi)=0$, one has $\wt W=\wt W_1+\wt W_2$, with $\wt W_2$ solving $\l\{\begin{array}{ll}-\D\wt W_2=f_2&\text{in }\O\\\wt W_2=-\wt W_1&\text{on }\pa\O\end{array}\r.$, hence $\wt W_2\in C^1\l(\ol\O\r)$ and $\wt W_2(x)=\wt W_2(\xi)+O(|x-\xi|)=\wt W(\xi)+O(|x-\xi|)$. From this we get
$$\wt W(x)=\frac38(c_{111}+c_{122})(x_1-\xi_1)\log\frac1{|x-\xi|}+\frac38(c_{112}+c_{222})(x_2-\xi_2)\log\frac1{|x-\xi|}+\wt W(\xi)+O(|x-\xi|).$$
Finally, due to Lemma \ref{de} and the harmonicity of $\n_xH$, one gets
$$c_{111}+c_{122}=\frac43\pa_{x_1}\D\log V(0)+O(|\xi|),\q\q\q\q\q\q c_{112}+c_{222}=\frac43\pa_{x_2}\D\log V(0)+O(|\xi|),$$
which concludes the proof.
\end{proof}\

\subsection{The final ansatz}

We can finally give the ansatz for our problem: we look for solutions in the form:
\begin{equation}\label{w}
\boxed{u(x)=W(x)+\phi(x)=\mrm PU(x)+\rho^2\tau^2\l(\mrm P\wh w\l(\frac{x-\xi}{\rho\tau}\r)+\wt W(x)\r)+\phi(x)},
\end{equation}
with $U,\wh w,\wt W,\mrm P$ defined as before and $\phi$ to be found.\\

We conclude by giving the value of $\tau=\tau_\rho$.\\
$\tau$ is implicitly defined by the following equation, and it is easy to see that it is well-defined, continuously depends on $\rho$ and satisfies $\frac1C\le\tau\le C$.
\begin{eqnarray}
0&=&-\label{tau}\log\l(8\tau^2\r)+\log V(\xi)+8\pi H(\xi,\xi)+\rho^2\tau^2\Bigg(2I(\xi,\xi)\\
\nonumber&-&\l(\pa^2_{x_1x_1}\mcal E(\xi,\xi)+\pa^2_{x_2x_2}\mcal E(\xi,\xi)\r)\l(4\pi\l(1-\log\frac1{\rho\tau}\r)H(\xi,\xi)+\l(2-\log\frac1{\rho\tau}\r)\log\frac1{\rho\tau}\r)\\
\nonumber&+&\l(\pa^2_{x_1x_1}\mcal E(\xi,\xi)+\pa^2_{x_2x_2}\mcal E(\xi,\xi)\r)L_1(\xi,\xi)-\frac{\pa^2_{x_1x_1}\mcal E(\xi,\xi)-\pa^2_{x_2x_2}\mcal E(\xi,\xi)}2L_2(\xi,\xi)\\
\nonumber&-&2\pa^2_{x_1x_2}\mcal E(\xi,\xi)L_3(\xi,\xi)+\wt W(\xi)\Bigg).
\end{eqnarray}
Here, $L_1,L_2,L_3$ are defined as in Lemma \ref{wh} and $I$ is similarly defined as the solution to
\begin{equation}\label{i}
\l\{\begin{array}{ll}-\D I(x,\xi)=0&x\in\O\\I(x,\xi)=\frac1{|x-\xi|^2}&x\in\pa\O\end{array}\r.
\end{equation}
We point out that $\tau=\sqrt{\frac{V(0)}8}e^{4\pi H(0,0)}+O\l(\rho^2\log^2\frac1\rho\r)$. Anyway, we cannot just define $\tau:=\tau_0=\sqrt{\frac{V(0)}8}e^{4\pi H(0,0)}$ (which was done in \cite{egp}), since that more involved definition is essential to get sharper estimates in the following sections.\\

\section{Estimate of the error}\

This section is devoted to estimating the \emph{error term} $\mcal R$ defined by:
\begin{equation}\label{r}
\mcal R:=\D W+\rho^2Ve^W,
\end{equation}
where $W$ is defined in \eqref{w}.\\
Clearly, $\mcal R=0$ if and only if $W$ solves \eqref{liou}; the smaller $\mcal R$ is, the better is the approximation.\\
We will estimate the $L^p$ norm of $\mcal R$ for $p>1$ sufficiently close to $1$. $L^p$ norms for $p$ in similar ranges will be considered throughout the paper, hence one may suppose to fix some $p_0>1$ once and for all.\\

The following sharp estimate on $\mcal R$ also gives a clue on the optimal size of $\xi$, which in Section $5$ we will show to be $|\xi|=O\l(\rho\sqrt{\log\frac1\rho}\r)$.

\begin{proposition}\label{rlp}$ $\\
Let $\mcal R$ be defined by \eqref{r}.\\
Then, for $p>1$ suitably close to $1$,
$$\|\mcal R\|_p=O\l(\rho^{\frac2p-1}\l(\rho^2\log\frac1\rho+|\xi|^2\r)\r)$$
\end{proposition}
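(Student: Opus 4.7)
My plan is to compute $\mcal R=\D W+\rho^2Ve^W$ explicitly and exploit the cancellations engineered into the ansatz. First, I would write $\D W=\D(\mrm PU)+\D\zeta_1+\D\zeta_2$ using the defining PDEs of each ingredient: $-\D(\mrm PU)=e^U$ in $\O$; the rescaled version of \eqref{eqwh} giving
\[-\D\zeta_1=\rho^2\tau^2 e^U\wh w\l(\tfrac{x-\xi}{\rho\tau}\r)+\tfrac12 e^U\l\langle D^2_{xx}\mcal E(\xi,\xi),x-\xi,x-\xi\r\rangle;\]
and \eqref{wtilde} for $\zeta_2$. Independently, I would expand $\rho^2Ve^W$ using the classical estimate $\mrm PU=U-\log(8\rho^2\tau^2)+8\pi H(\cdot,\xi)+O(\rho^2)$, the Taylor expansion $e^{\zeta_1+\zeta_2}=1+\zeta_1+\zeta_2+O(\rho^4\log^4\tfrac1\rho)$, and the definition \eqref{e} of $\mcal E$, obtaining
\[\rho^2Ve^W=e^U\,\tfrac{V(\xi)e^{8\pi H(\xi,\xi)}}{8\tau^2}(1+\mcal E(x,\xi))(1+\zeta_1+\zeta_2)(1+O(\rho^2)).\]
The implicit definition \eqref{tau} of $\tau$ is designed precisely so that $V(\xi)e^{8\pi H(\xi,\xi)}/(8\tau^2)$ absorbs the constant-at-$\xi$ pieces, which by Lemma \ref{wh} at $x=\xi$ and Lemma \ref{wt} coincide with $\rho^2\tau^2(\mrm P\wh w(0)+\wt W(\xi))$. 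Subtracting, the $e^U$ terms cancel; the quadratic $\tfrac12 e^U\langle D^2_{xx}\mcal E,(x-\xi)^{\otimes 2}\rangle$ coming from $-\D\zeta_1$ kills the quadratic Taylor coefficient of $e^U\mcal E$; the cubic-and-higher remainder of $e^U\mcal E$ is compensated by $-\D\zeta_2$ up to the mismatch $(e^U-8\rho^2\tau^2/|x-\xi|^4)$; and $\rho^2\tau^2 e^U\wh w$ combines with $e^U\zeta_1$ to give $\rho^2\tau^2 e^U(\mrm P\wh w-\wh w)$, whose constant-at-$\xi$ value is absorbed by \eqref{tau}.

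\textbf{Residual terms.}
After these cancellations, the surviving residual splits into four families: (i) the leading linear piece $e^U\langle\n_x\mcal E(\xi,\xi),x-\xi\rangle$, which by Lemma \ref{de} and the admissibility assumptions $\n\mcal F(0)=0$, $D^2\mcal F(0)=0$ is $O(|\xi|^2 e^U|x-\xi|)$; (ii) the Taylor mismatch $(e^U-8\rho^2\tau^2/|x-\xi|^4)\cdot O(|x-\xi|^3)$, of size $O(\rho^4/|x-\xi|^3)$ for $|x-\xi|\gg\rho\tau$; (iii) the $x$-dependent part of the projection defect $\rho^2\tau^2 e^U(\mrm P\wh w-\wh w)$, controlled via Lemma \ref{wh} by $\rho^2\tau^2 e^U\cdot O\l((|x-\xi|+|\xi||x-\xi|+|x-\xi|^2)\log\tfrac1\rho\r)$; and (iv) higher-order exponential remainders such as $e^U\cdot O((\zeta_1+\zeta_2)^2)=O(e^U\rho^4\log^4\tfrac1\rho)$ together with the $e^U\cdot O(\rho^2)$ error from the expansion of $\mrm PU$.

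\textbf{$L^p$ estimates and main obstacle.}
I would bound each residual in $L^p$ via the rescaling $y=(x-\xi)/(\rho\tau)$, using the key scaling identity $\|e^U|x-\xi|^k\|_{L^p(\O)}=O(\rho^{2/p-2+k})$ valid for small $k\ge0$ and $p>1$ close enough to $1$ (so that $|y|^{kp}/(1+|y|^2)^{2p}$ is integrable on $\R^2$), complemented by the decay $e^U\le C\rho^2/|x-\xi|^4$ outside a fixed ball around $\xi$. This yields $\|\text{(i)}\|_p=O(|\xi|^2\rho^{2/p-1})$, $\|\text{(ii)}\|_p=O(\rho^{2/p+1})$ (after a near/far split at $|x-\xi|=\rho\tau$, the far-field integral $\int\rho^{4p}|x-\xi|^{-3p}$ converging for $p$ close to $1$), $\|\text{(iii)}\|_p=O(\rho^{2/p+1}\log\tfrac1\rho)$, and $\|\text{(iv)}\|_p=O(\rho^{2/p+2}\log^4\tfrac1\rho)$. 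Summing these gives the claimed $\|\mcal R\|_p=O(\rho^{2/p-1}(|\xi|^2+\rho^2\log\tfrac1\rho))$. The principal obstacle is to verify that \eqref{tau} indeed absorbs the full $\rho^2\log^2\tfrac1\rho$-size constant-at-$\xi$ piece of $\mrm P\wh w(0)+\wt W(\xi)$ --- the $\log^2$ originating from the asymptotic $\wh w_1(r)=-2\log^2 r+4\log r+\ldots$ of Lemma \ref{w1} --- because without this exact matching the error would leak at order $\rho^{2/p}\log^2\tfrac1\rho$, exceeding the target by a factor $\log\tfrac1\rho/\rho$.
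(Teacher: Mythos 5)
Your overall strategy matches the paper's: compute $\mcal R$ from the defining PDEs of $\mrm PU$, $\zeta_1$, $\zeta_2$, expand $\rho^2Ve^W$, exploit the cancellations built into \eqref{tau}, \eqref{eqwh} and \eqref{wtilde}, and conclude with the scaling estimates of Lemma \ref{lp}. However, there is a concrete gap in your handling of the $\mrm PU-U$ expansion that, as written, would break the estimate.

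You invoke the crude expansion $\mrm PU=U-\log(8\rho^2\tau^2)+8\pi H(\cdot,\xi)+O(\rho^2)$, place the resulting $e^U\cdot O(\rho^2)$ into family (iv), and then claim $\|\text{(iv)}\|_p=O(\rho^{2/p+2}\log^4\tfrac1\rho)$. That last equality is arithmetically wrong: by Lemma \ref{lp}, $\|e^U\cdot\rho^2\|_p=\rho^2\|e^U\|_p=O(\rho^{2/p})$, which for $|\xi|=O(\rho\sqrt{\log\tfrac1\rho})$ is strictly larger than the target $O(\rho^{2/p-1}(\rho^2\log\tfrac1\rho+|\xi|^2))=O(\rho^{2/p+1}\log\tfrac1\rho)$. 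So with your accounting the proof does not close. The resolution is the refinement used in Lemma \ref{wu}: $\mrm PU(x)-U(x)=-\log(8\rho^2\tau^2)+8\pi H(x,\xi)+2\rho^2\tau^2 I(\xi,\xi)+O(\rho^4+\rho^2|x-\xi|)$, where $I$ is the harmonic extension of $1/|x-\xi|^2$ from \eqref{i}. The constant $2\rho^2\tau^2 I(\xi,\xi)$ is exactly the term $2I(\xi,\xi)$ appearing in \eqref{tau}, so it is absorbed by the choice of $\tau$ along with the constant-at-$\xi$ parts of $\mrm P\wh w$ and $\wt W(\xi)$; you listed only the latter two and overlooked $I(\xi,\xi)$. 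After this cancellation the surviving error is $O(\rho^4+\rho^2|x-\xi|)$, whose $L^p$ norm against $e^U$ is $O(\rho^{2/p+1})$, which is within budget. A second, milder imprecision: your family (iii) attributes the projection-defect residual only to $\rho^2\tau^2(\mrm P\wh w-\wh w)$, but by Lemma \ref{wt} the term $e^U\rho^2\tau^2(\wt W(x)-\wt W(\xi))$ also contributes a piece of leading size $e^U\rho^2|x-\xi|\log\tfrac1{|x-\xi|}$ (this is the first half of $\Th$ in \eqref{theta}); fortunately it is again $O(\rho^{2/p+1}\log\tfrac1\rho)$ by Lemma \ref{lp}, so it does not alter the final bound, but it should be listed.
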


\begin{remark}$ $\\
We point out that, if $|\xi|=o(1)$ is suitably small, the correction terms in \eqref{w} considerably improves the estimate in Proposition \ref{rlp}. In fact, the ansatz $W=\mrm PU$ only gives $\|\mcal R\|_p=O\l(\rho^{\frac2p-1}\r)$ (see \cite{egp}, Lemma B.1).
\end{remark}\

To prove Proposition \ref{rlp} we will use an estimate on the difference between the prescribed term $W$ and the bubble $U$. We remark the presence of the term
\begin{equation}\label{theta}
\Th(x-\xi)=\rho^2\tau^2\l(\frac12\langle\n\D\log V(0),x-\xi\rangle\log\frac1{|x-\xi|}-32\pi^2\log\frac1\rho\l(\pa^2_{x_1y_1}H(0,0)+\pa^2_{x_2y_2}H(0,0)\r)\langle\n_xH(0,0),x-\xi\rangle\r),
\end{equation}
which will give rise to the term $\eta_0$ defined by \eqref{eta}

\begin{lemma}\label{wu}$ $\\
Let $W$ and $U$ be defined by \eqref{w} and \eqref{u} respectively. Then,
\begin{eqnarray*}
&&W(x)-U(x)\\
&=&8\pi(H(x,\xi)-H(\xi,\xi))-2\log\rho-\log V(\xi)+\rho^2\tau^2\wh w\l(\frac{x-\xi}{\rho\tau}\r)+\Th(x-\xi)\\
&+&O\l(\rho^4\log^2\frac1\rho+\rho^2|x-\xi|+\l(\rho^2\log\frac1\rho\r)|\xi||x-\xi|+\l(\rho^2\log\frac1\rho\r)|x-\xi|^2+\rho^2|\xi||x-\xi|\log\frac1{|x-\xi|}\r).
\end{eqnarray*}
\end{lemma}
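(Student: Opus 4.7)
The proof is essentially a bookkeeping exercise built on combining three ingredients: the standard projection estimate for the bubble $U$, Lemma \ref{wh} for $\mathrm{P}\widehat{w}$, and Lemma \ref{wt} for $\wt W$. The plan is to compute each of the three summands in $W-U=(\mathrm{P}U-U)+\rho^2\tau^2\mathrm{P}\widehat{w}(\cdot)+\rho^2\tau^2\wt W$ separately and then invoke the definition \eqref{tau} of $\tau$ to force the cancellation of all $x$-independent remainder terms.

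First I would handle $\mathrm{P}U-U$. Using the explicit formula $U(x)=\log(8\rho^2\tau^2)-2\log(\rho^2\tau^2+|x-\xi|^2)$, the Taylor expansion $-2\log(\rho^2\tau^2+|x-\xi|^2)=-4\log|x-\xi|-\frac{2\rho^2\tau^2}{|x-\xi|^2}+O\!\left(\rho^4/|x-\xi|^4\right)$ is valid uniformly on $\pa\O$. Combined with the identity $-4\log|x-\xi|=-8\pi H(x,\xi)$ on $\pa\O$ (a direct consequence of $G(x,\xi)|_{\pa\O}=0$), harmonic extension gives
\[
\mathrm{P}U(x)=U(x)+8\pi H(x,\xi)-\log(8\rho^2\tau^2)+2\rho^2\tau^2 I(x,\xi)+O(\rho^4),
\]
with $I$ as in \eqref{i}. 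Then I would substitute $\log(8\rho^2\tau^2)=2\log\rho+\log V(\xi)+8\pi H(\xi,\xi)+\rho^2\tau^2\Lambda$, where $\Lambda$ is the bracket appearing in \eqref{tau}, to rewrite this as $8\pi(H(x,\xi)-H(\xi,\xi))-2\log\rho-\log V(\xi)-\rho^2\tau^2\Lambda+2\rho^2\tau^2 I(x,\xi)+O(\rho^4)$.

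Next I would multiply the expansion of Lemma \ref{wh} by $\rho^2\tau^2$ and the expansion of Lemma \ref{wt} by $\rho^2\tau^2$, and sum the three contributions. The key observation is that $\Lambda$ is designed so that the $x$-independent pieces coming from $\mathrm{P}\widehat{w}$ at order $\rho^2\tau^2$ (those containing $H(\xi,\xi)$, $\log\frac1{\rho\tau}$, and $L_i(\xi,\xi)$), together with $\rho^2\tau^2\wt W(\xi)$ and the leading $2\rho^2\tau^2 I(\xi,\xi)$ part of $2\rho^2\tau^2 I(x,\xi)$, cancel exactly the $\rho^2\tau^2\Lambda$ term. What survives among the non-error terms is precisely $\rho^2\tau^2\widehat{w}\!\left(\frac{x-\xi}{\rho\tau}\right)$, together with the two $x$-linear pieces — the $\log\frac1\rho$--weighted contribution from Lemma \ref{wh} and the $\log\frac1{|x-\xi|}$--weighted contribution from Lemma \ref{wt} — which are exactly the summands forming $\Theta(x-\xi)$ in \eqref{theta}.

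Finally, the error term is obtained by adding up the error contributions: $O(\rho^4)$ from $\mathrm{P}U-U$; the Lipschitz estimate $2\rho^2\tau^2(I(x,\xi)-I(\xi,\xi))=O(\rho^2|x-\xi|)$; the errors from Lemma \ref{wh} scaled by $\rho^2\tau^2$; and the errors from Lemma \ref{wt} scaled by $\rho^2\tau^2$. These match the stated expression. The main obstacle is not analytic but combinatorial: one must carefully identify and cross out, term by term, the constant-in-$x$ contributions coming from the three summands in order to verify that \eqref{tau} is indeed tuned to produce exactly the stated cancellation; once this is done the rest reduces to absorbing the Taylor remainders into the claimed error bound.
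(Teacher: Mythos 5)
Your proposal follows essentially the same route as the paper: expand $\mathrm{P}U-U$ by the maximum principle, multiply Lemmas \ref{wh} and \ref{wt} by $\rho^2\tau^2$, and observe that the constant-in-$x$ pieces cancel precisely because of the definition \eqref{tau} of $\tau$, leaving $\rho^2\tau^2\widehat w$ and $\Theta$ plus the claimed errors. The only cosmetic difference is that you substitute the $\tau$-equation into $\mathrm{P}U-U$ first while the paper substitutes it into the sum of the two correction terms first, but the cancellation is identical.
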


\begin{proof}$ $\\
From Lemmas \ref{wh}, \ref{wt} and the definition \eqref{tau} of $\tau$ we get:
\begin{eqnarray*}
&&\rho^2\tau^2\l(\mrm P\wh w\l(\frac{x-\xi}{\rho\tau}\r)+\wt W(x)\r)\\
&=&\rho^2\tau^2\Bigg(-\l(\pa^2_{x_1x_1}\mcal E(\xi,\xi)+\pa^2_{x_2x_2}\mcal E(\xi,\xi)\r)\l(4\pi\l(1-\log\frac1{\rho\tau}\r)H(\xi,\xi)-\l(2-\log\frac1{\rho\tau}\r)\log\frac1{\rho\tau}\r)\\
&+&\l(\pa^2_{x_1x_1}\mcal E(\xi,\xi)+\pa^2_{x_2x_2}\mcal E(\xi,\xi)\r)L_1(\xi,\xi)+\frac{\pa^2_{x_1x_1}\mcal E(\xi,\xi)-\pa^2_{x_2x_2}\mcal E(\xi,\xi)}2L_2(\xi,\xi)\\
&-&2\pa^2_{x_1x_2}\mcal E(\xi,\xi)L_3(\xi,\xi)+\wt W(\xi)\Bigg)+\rho^2\tau^2\wh w\l(\frac{x-\xi}{\rho\tau}\r)+\Th(x-\xi)\\
&+&O\l(\rho^4\log^2\frac1\rho+\rho^2|x-\xi|+\l(\rho^2\log\frac1\rho\r)|\xi||x-\xi|+\l(\rho^2\log\frac1\rho\r)|x-\xi|^2+\rho^2|\xi||x-\xi|\log\frac1{|x-\xi|}\r)\\
&=&\log\l(8\tau^2\r)-\log V(\xi)-8\pi H(\xi,\xi)-2\rho^2\tau^2I(\xi,\xi)+\rho^2\tau^2\wh w\l(\frac{x-\xi}{\rho\tau}\r)+\Th(x-\xi)\\
&+&O\l(\rho^4\log^2\frac1\rho+\rho^2|x-\xi|+\l(\rho^2\log\frac1\rho\r)|\xi||x-\xi|+\l(\rho^2\log\frac1\rho\r)|x-\xi|^2+\rho^2|\xi||x-\xi|\log\frac1{|x-\xi|}\r).\\
\end{eqnarray*}
Moreover, the maximum principle and the definition \eqref{i} of $I(x,\xi)$ give:
\begin{eqnarray*}
\mrm PU(x)-U(x)&=&-\log\l(8\rho^2\tau^2\r)+8\pi H(x,\xi)+2\rho^2\tau^2I(x,\xi)+O\l(\rho^4\r)\\
&=&-\log\l(8\rho^2\tau^2\r)+8\pi H(x,\xi)+2\rho^2\tau^2I(\xi,\xi)+O\l(\rho^4+\rho^2|x-\xi|\r).
\end{eqnarray*}
By summing these two estimates and the definition \eqref{w} of $W$ the claim follows.
\end{proof}\

We will also need, here and later in the paper, some estimates on integrals of elementary functions. Since they are rather easy to prove and widely used in the study of problem \eqref{liou}, we skip the proof.

\begin{lemma}\label{lp}$ $\\
For $p>1$ suitably close to $1$ the following estimates hold true:
\begin{eqnarray*}
\l\|e^{U(x)}|x-\xi|^s\r\|_p&=&\l\{\begin{array}{ll}O\l(\rho^{\frac2p+s-2}\r)&\text{if }s\le2\\O\l(\rho^2\r)&\text{if }s>2\end{array}\r.;\\
\l\|e^{U(x)}|x-\xi|\log\frac1{|x-\xi|}\r\|_p&=&O\l(\rho^{\frac2p-1}\log\frac1\rho\r).
\end{eqnarray*}
\end{lemma}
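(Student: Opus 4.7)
The plan is to reduce each integral to a dimensionless one via the natural rescaling $y=(x-\xi)/(\rho\tau)$, which factors out the concentration scale of the bubble $e^{U(x)}=8\rho^2\tau^2/(\rho^2\tau^2+|x-\xi|^2)^2$. Since $\tau$ is pinched between two positive constants by its defining equation \eqref{tau}, every factor of $\rho\tau$ can be traded for $\rho$ in the final estimates.

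After the substitution (with Jacobian $(\rho\tau)^2\,dy$), the integral $\int_\O e^{pU}|x-\xi|^{sp}\,dx$ becomes, up to the constant $8^p$,
$$(\rho\tau)^{sp+2-2p}\int_{\O_\xi}\frac{|y|^{sp}}{(1+|y|^2)^{2p}}\,dy,$$
where $\O_\xi:=(\O-\xi)/(\rho\tau)$ is a bounded domain of diameter $O(1/\rho)$. In polar coordinates the radial integrand is $r^{sp+1}/(1+r^2)^{2p}$, decaying at infinity like $r^{sp+1-4p}$. When $s\le 2$ and $p>1$ is sufficiently close to $1$, this exponent is strictly less than $-1$, so the integral converges over all of $\R^2$ to a finite constant independent of $\rho$, and the prefactor $(\rho\tau)^{sp+2-2p}$ directly gives $\|e^U|x-\xi|^s\|_p=O(\rho^{s+2/p-2})$. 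When $s>2$, instead, for $p$ close to $1$ the decay exponent is above $-1$, so the integral diverges and must be cut off at $r\sim 1/\rho$; a direct computation gives $\int_1^{1/\rho}r^{sp+1-4p}\,dr\sim \rho^{-(sp+2-4p)}$, which combined with the prefactor produces $\|e^U|x-\xi|^s\|_p^p\sim\rho^{2p}$, i.e.\ the stated $O(\rho^2)$ (the marginal case $sp+2-4p=0$ adds only a harmless logarithmic factor that is absorbed in the big-$O$).

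For the logarithmic estimate I would split $\log(1/|x-\xi|)=\log(1/(\rho\tau))-\log|y|$, so that after rescaling the integrand raised to the $p$-th power becomes $8^p(\rho\tau)^{2-p}|y|^p(1+|y|^2)^{-2p}\bigl(\log(1/(\rho\tau))-\log|y|\bigr)^p$. I then split $\O_\xi$ into $\{|y|\le 1\}$ and its complement: on the inner ball $|\log|y||^p$ is locally integrable and the constant term $\log(1/\rho)$ dominates the other, contributing $O(\log^p(1/\rho))$; on the complement, for $p$ near $1$ the decay $|y|^{p-4p}=|y|^{-3p}$ absorbs any polynomial in $\log|y|$, again contributing $O(\log^p(1/\rho))$. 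Summing the two pieces yields $\|e^U|x-\xi|\log(1/|x-\xi|)\|_p^p\sim\rho^{2-p}\log^p(1/\rho)$, which is the desired $O(\rho^{2/p-1}\log(1/\rho))$.

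The only delicate point is choosing $p>1$ close enough to $1$ so that all the threshold conditions in the case split hold simultaneously across every integral to be estimated later in the paper; the main computational work reduces to the elementary polar integrations above, so the whole argument is essentially bookkeeping once the rescaling is in place.
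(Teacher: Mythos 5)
Your argument is correct, and it is the standard rescaling computation that the authors have in mind: the paper explicitly omits the proof of this lemma as elementary and well known, so there is no alternative route to compare against. The only point worth making explicit is that in the case $s>2$ the choice of $p$ close to $1$ must depend on $s$ (one needs $p<2/(4-s)$ when $2<s<4$ so that the rescaled integral really diverges and the cutoff at $|y|\sim 1/\rho$ kicks in), which is exactly what the hypothesis ``$p>1$ suitably close to $1$'' is for; your bookkeeping already reflects this.
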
\

\begin{proof}[Proof of Proposition \ref{rlp}]$ $\\
Since $\wt W$ solves \eqref{wtilde}, we can write:
\begin{eqnarray*}
&&\rho^2\tau^2\D\wt W(x)\\
&=&-e^{U(x)}\l(1+\frac{2\rho^2\tau^2}{|x-\xi|^2}+\frac{\rho^4\tau^4}{|x-\xi|^4}\r)\l(\mcal E(x,\xi)-\l\langle\n_x\mcal E(\xi,\xi),x-\xi\r\rangle-\frac12\l\langle D^2_{xx}\mcal E(\xi,\xi),x-\xi,x-\xi\r\rangle\r)\\
&=&e^{U(x)}\Bigg(-\mcal E(x,\xi)+\l\langle\n_x\mcal E(\xi,\xi),x-\xi\r\rangle+\frac12\l\langle D^2_{xx}\mcal E(\xi,\xi),x-\xi,x-\xi\r\rangle\\
&+&O\l(\frac{\rho^4}{|x-\xi|}+\rho^2|x-\xi|\r)\Bigg).
\end{eqnarray*}
On the other hand, in view of Lemmas \ref{wu} and \ref{de}, we have:
\begin{eqnarray*}
&&\rho^2V(x)e^{W(x)-U(x)}\\
&=&(1+\mcal E(x,\xi))e^{\rho^2\tau^2\wh w\l(\frac{x-\xi}{\rho\tau}\r)+\Th(x-\xi)+O\l(\rho^4\log^2\frac1\rho+\rho^2|x-\xi|+\l(\rho^2\log\frac1\rho\r)|\xi||x-\xi|+\l(\rho^2\log\frac1\rho\r)|x-\xi|^2+\rho^2|\xi||x-\xi|\log\frac1{|x-\xi|}\r)}\\
&=&1+\mcal E(x,\xi)+\rho^2\tau^2\wh w\l(\frac{x-\xi}{\rho\tau}\r)+\Th(x-\xi)\\
&+&O\l(|\mcal E(x,\xi)|\l|\Th(x-\xi)+\rho^2\tau^2\wh w\l(\frac{x-\xi}{\rho\tau}\r)\r|+\l|\Th(x-\xi)+\rho^2\tau^2\wh w\l(\frac{x-\xi}{\rho\tau}\r)\r|^2\r)\\
&+&O\l(\rho^4\log^2\frac1\rho+\rho^2|x-\xi|+\l(\rho^2\log\frac1\rho\r)|\xi||x-\xi|+\l(\rho^2\log\frac1\rho\r)|x-\xi|^2+\rho^2|\xi||x-\xi|\log\frac1{|x-\xi|}\r)\\
&=&1+\mcal E(x,\xi)+\rho^2\tau^2\wh w\l(\frac{x-\xi}{\rho\tau}\r)+\Th(x-\xi)\\
&+&O\l(\rho^4\log^4\frac1\rho+\rho^2|x-\xi|+\l(\rho^2\log^2\frac1\rho\r)|\xi||x-\xi|+\l(\rho^2\log\frac1\rho\r)|x-\xi|^2+\rho^2|\xi||x-\xi|\log\frac1{|x-\xi|}\r),\\
\end{eqnarray*}
where we used that $\wh w\l(\frac{x-\xi}{\rho\tau}\r)=O\l(\log^2\frac1\rho\r)$ on $\ol\O$.\\
Therefore, using the previous estimates and the expansion of $\n_x\mcal E(\xi,\xi)$ given by Lemma \ref{de}, we get:
\begin{eqnarray}
\nonumber\mcal R(x)&=&\D\mrm PU(x)+\rho^2\tau^2\D\mrm P\wh w\l(\frac{x-\xi}{\rho\tau}\r)+\rho^2\tau^2\D\wt W(x)+\rho^2V(x)e^{W(x)}\\
\nonumber&=&e^{U(x)}\Bigg(-1-\rho^2\tau^2\wh w\l(\frac{x-\xi}{\rho\tau}\r)-\frac12\l\langle D^2_{xx}\mcal E(\xi,\xi),x-\xi,x-\xi\r\rangle\\
\nonumber&-&\mcal E(x,\xi)+\l\langle\n_x\mcal E(\xi,\xi),x-\xi\r\rangle+\frac12\l\langle D^2_{xx}\mcal E(\xi,\xi),x-\xi,x-\xi\r\rangle+\rho^2V(x)e^{W(x)-U(x)}\\
\nonumber&+&O\l(\frac{\rho^4}{|x-\xi|}+\rho^2|x-\xi|\r)\Bigg)\\
\nonumber&=&e^{U(x)}\Bigg(\frac1{2\pi}\langle\n\mcal P(\xi),x-\xi\rangle+\Th(x-\xi)\\
\label{rx}&+&O\l(\frac{\rho^4}{|x-\xi|}+\rho^2|x-\xi|+\l(\rho^2\log^2\frac1\rho\r)|\xi||x-\xi|+\l(\rho^2\log^2\frac1\rho\r)|x-\xi|^2+\rho^2|\xi||x-\xi|\log\frac1{|x-\xi|}\r)\Bigg).
\end{eqnarray}
Since $|\n\mcal P(\xi)|=O\l(|\xi|^2\r)$ and $|\Th(x-\xi)|=O\l(\rho^2|x-\xi|\log\frac1{|x-\xi|}+\l(\rho^2\log\frac1\rho\r)|x-\xi|\r)$, Lemma \ref{lp} gives the desired estimates.
\end{proof}\

\section{Linear theory}\

In this section we will apply a fixed point theorem in some suitable spaces to find $\phi$.\\
To this purpose, one sees that $u=W+\phi$ solves \eqref{liou} if and only if $\phi$ solves
\begin{equation}\label{eqphi}
\mcal R+\mcal L\phi+\mcal N(\phi)=0,
\end{equation}
where $\mcal R$ is the error defined by \eqref{r}, $\mcal L$ is the linearized operator at $\phi=0$, given by
\begin{equation}\label{l}
\mcal L\phi:=\D\phi+\rho^2Ve^W\phi,
\end{equation}
and $\mcal N$ is the \emph{nonlinear term}:
\begin{equation}\label{n}
\mcal N(\phi):=\rho^2Ve^W\l(e^\phi-1-\phi\r).
\end{equation}\

In order to solve \eqref{eqphi} we investigate the invertibility of the linearized operator $\mcal L$.\\
The operator $\mcal L$ will not be invertible on the whole space $H^1_0(\O)$, but it will be on a finite-codimensional space. In particular, if we define $\psi_1,\psi_2\in H^1_0(\O)$ as
\begin{equation}\label{psi}
\psi_i(x):=\frac{x_i-\xi_i}{\rho^2\tau^2+|x-\xi|^2},
\end{equation}
we can invert $\mcal L$ on the orthogonal complement of $\mbf K=\mrm{span}\{\mrm P\psi_1,\mrm P\psi_2\}$, namely
\begin{equation}\label{kort}
\mbf K^\perp=\l\{\phi\in H^1_0(\O):\,\langle\phi,\mrm P\psi_1\rangle_{H^1_0(\O)}=\langle\phi,\mrm P\psi_2\rangle_{H^1_0(\O)}=0\r\}.
\end{equation}
Notice that the $\psi_i$'s solve the linear problem $-\D\mrm P\psi_i=-\D\psi_i=e^U\psi_i$, with $U$ being the standard bubble \eqref{u}.
The estimates on the inverse operator $\mcal L^{-1}$ are not uniform in $\rho$, as its norm diverges logarithmically as $\rho$ goes to $0$. However, this is not an issue since most estimates throughout the paper, such as Proposition \ref{rlp}, converge polynomially in $\rho$.\\

The results in this section are obtained arguing very similarly to \cite{egp}, since the main term $\mrm PU$ in the ansatz \eqref{w} is the same as in \cite{egp} and the correction terms are negligible. Therefore proofs will be sketchy or skipped.\\
The following Lemma, concerning invertibility of $\mcal L$, is analogous to Proposition 3.1 in \cite{egp}.

\begin{lemma}\label{lin}$ $\\
Let $\mbf K^\perp$ and $\mcal L$ be defined respectively by \eqref{kort} and \eqref{l} and $\xi\in\O$, $f\in L^p(\O)$ be given with $p>1$.\\
Then, there exists $\rho_0>0$ such that for any $\rho\in(0,\rho_0)$ there is a unique solution $(\phi,c_1,c_2)\in\mbf K^\perp\x\R\x\R$ to
\begin{equation}\label{lphi}
\mcal L\phi=f+e^U(c_1\psi_1+c_2\psi_2).
\end{equation}
Moreover, there exists $C>0$, not depending on $\rho$ nor on $\xi$ provided $\xi$ does not approach $\pa\O$, such that
$$\|\phi\|\le C\log\frac1\rho\|f\|_p.$$
\end{lemma}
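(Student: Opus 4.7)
The plan is to follow the standard Lyapunov--Schmidt linear theory, adapting \cite{egp}. The core is the a priori estimate $\|\phi\|\le C\log\frac1\rho\|f\|_p$; once this is established, existence and uniqueness follow from a routine Fredholm argument and the bound on $\|\phi\|$ is inherited directly. To prove the estimate I would argue by contradiction: assume there are sequences $\rho_n\to0$, $\xi_n$ bounded away from $\pa\O$, $f_n$, and solutions $(\phi_n,c_{1,n},c_{2,n})\in\mbf K^\perp\x\R\x\R$ of $\mcal L_n\phi_n=f_n+e^{U_n}(c_{1,n}\psi_{1,n}+c_{2,n}\psi_{2,n})$ with $\|\phi_n\|=1$ and $\log\frac1{\rho_n}\|f_n\|_p\to0$. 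First, multiplying by $\mrm P\psi_{j,n}$ and integrating, using the orthogonality of $\phi_n$ to $\mbf K$ together with the expansions $\int_\O e^{U_n}\psi_{i,n}\mrm P\psi_{j,n}=\frac C{\rho_n^2}(\d_{ij}+o(1))$, one controls $|c_{i,n}|$ at an order small enough to make the Lagrange-multiplier term negligible in $L^p$ compared with $f_n$.

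Then I would perform the blow-up $\wt\phi_n(y):=\phi_n(\xi_n+\rho_n\tau_n y)$. By elliptic regularity and a diagonal extraction, $\wt\phi_n$ converges locally uniformly to a bounded solution $\wt\phi_\infty$ of $-\D v-\frac8{(1+|y|^2)^2}v=0$ on $\R^2$. This kernel is three-dimensional, spanned by $\frac{y_i}{1+|y|^2}$ ($i=1,2$) and the dilation mode $\frac{1-|y|^2}{1+|y|^2}$. The translation modes are ruled out by the orthogonality to $\mrm P\psi_{i,n}$, while the dilation coefficient is forced to vanish by testing $\mcal L_n\phi_n$ against $\mrm P\psi_{0,n}$, where $\psi_{0,n}(x):=\frac{\rho_n^2\tau_n^2-|x-\xi_n|^2}{\rho_n^2\tau_n^2+|x-\xi_n|^2}$: since $\mrm P\psi_{0,n}$ is \emph{not} a kernel element of $\mcal L_n$ (the correction $\mrm P\psi_{0,n}-\psi_{0,n}$ being essentially the harmonic extension of $1$ on $\pa\O$), one extracts a non-degenerate relation forcing the dilation coefficient to zero. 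Hence $\wt\phi_\infty\eq0$, and combining with the Green's representation $\phi_n=(-\D)^{-1}(\rho_n^2Ve^{W_n}\phi_n+f_n+e^{U_n}(c_{1,n}\psi_{1,n}+c_{2,n}\psi_{2,n}))$ and $L^p$--$W^{2,p}$ estimates for $p>1$ close to $1$ (the factor $\log\frac1\rho$ emerging from $\|e^{U_n}\|_p\sim\rho_n^{\frac2p-2}$), we obtain $\|\phi_n\|\to0$, contradicting $\|\phi_n\|=1$.

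For existence and uniqueness, I would rewrite the system as $\phi-T_\rho\phi=g$ in $\mbf K^\perp$ with $T_\rho\phi:=\Pi_{\mbf K^\perp}(-\D)^{-1}(\rho^2Ve^W\phi)$ compact (by Rellich) and $g$ the projected right-hand side; the a priori estimate yields injectivity of $\mrm{Id}-T_\rho$, and the Fredholm alternative then gives existence and uniqueness, with the norm bound inherited. I expect the main obstacle to be excluding the dilation mode from the limit kernel: since the problem is formulated with only two Lagrange multipliers (no third coefficient $c_0$ for the scaling direction), one cannot simply orthogonalize this mode away but must exploit that, thanks to the choice of $\tau_\rho$ encoded in \eqref{tau}, $\mrm P\psi_{0,n}$ ceases to be a kernel element after projection. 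Carrying this out rigorously requires sharp control on $\mrm P\psi_{0,n}-\psi_{0,n}$ near the boundary, which is the technically most delicate ingredient.
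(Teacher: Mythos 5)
Your overall scheme matches the paper's sketch (contradiction argument with $\|\phi_n\|=1$ and $\log\frac1{\rho_n}\|f_n\|_p\to0$, control of the multipliers $c_{i,n}$ by pairing with $\mrm P\psi_{j,n}$, blow-up rescaling, and reduction modulo the $3$-dimensional kernel of the linearized Liouville operator, followed by a Fredholm step). The gap is in the mechanism you propose to kill the dilation mode.

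Testing \eqref{lphi} against $\mrm P\psi_{0,n}$ does \emph{not} yield the constraint you need. After integrating by parts, $\int_\O(\D\phi)\,\mrm P\psi_0=\int_\O\phi\,\D\mrm P\psi_0=-\int_\O e^U\phi\,\psi_0$, while the zeroth-order piece gives $\int_\O\rho^2Ve^W\phi\,\mrm P\psi_0\approx\int_\O e^U\phi(\psi_0+1)$, because $\mrm P\psi_0-\psi_0$ is the harmonic extension of $-\psi_0|_{\pa\O}=1+O(\rho^2)$, hence $\approx1$ on $\ol\O$. The two $\int e^U\phi\psi_0$ terms cancel and you are left with control on $\int_\O e^U\phi$, not on $\int_\O e^U\phi\,\psi_0$. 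But $\int_{\R^2}\frac8{(1+|y|^2)^2}\,\frac{1-|y|^2}{1+|y|^2}\,\mrm dy=0$, so in the blow-up limit the resulting condition $\int\frac8{(1+|y|^2)^2}\wt\phi_\infty=0$ is automatically satisfied by \emph{any} multiple of the dilation mode $Z_0=\frac{1-|y|^2}{1+|y|^2}$; it places no constraint on its coefficient. This is exactly why the paper is careful to say that one tests against ``suitable functions'' (and not against $\mrm P\psi_0$) to conclude $\langle\phi,\mrm P\psi_0\rangle_{H^1_0(\O)}=\int_\O e^U\phi\,\psi_0=o(1)$: the correct test function carries a logarithmic weight (something of the form $\psi_0\log\l(\rho^2\tau^2+|x-\xi|^2\r)$ plus a lower-order correction), so that its projection onto $H^1_0$ picks up a $\log|x-\xi|$ boundary term whose pairing with $e^U\phi$ produces the desired nontrivial coefficient against $\int e^U\phi\psi_0$.

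Two further remarks. First, attributing the non-degeneracy of this step to the specific choice of $\tau_\rho$ in \eqref{tau} is off-target: the linear theory holds uniformly for any $\tau$ bounded between two positive constants, and the refined definition of $\tau$ in \eqref{tau} is designed to sharpen the \emph{error} estimate of Proposition \ref{rlp}, not to make Lemma \ref{lin} work. Second, your Fredholm closing argument and the origin of the $\log\frac1\rho$ factor from $\|e^U\|_p\sim\rho^{2/p-2}$ are consistent with the paper's approach and would go through once the dilation mode is properly excluded.
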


\begin{proof}[Sketch of the proof]$ $\\
Following \cite{egp}, we argue by contradiction, assuming there is a family of solutions $\phi\in\mbf K^\perp$ to \eqref{lphi} satisfying $\|\phi\|=1$ and $\|f\|_p=o\l(\frac1{\log\frac1\rho}\r)$.\\
By multiplying each side of \eqref{lphi} with each $\mrm P\psi_i$ we deduce $|c_i|=O\l(\rho^\frac2p\r)$. Then, by testing again suitable functions one gets $\langle\phi,\mrm P\psi_0\rangle_{H^1_0(\O)}=o(1)$, where $\psi_0=\frac{\rho^2\tau^2-|x-\xi|^2}{\rho^2\tau^2+|x-\xi|^2}$ is another solution to $-\D\psi_0=e^U\psi_0$.\\
Finally, one considers a rescaling $\wt\phi(y)=\phi(\rho\tau y+\xi)$, which is uniformly bounded with respect to the norm defined by
$$\l\|\wt\phi\r\|^2:=\int_{\R^2}\l(\l|\n\wt\phi(y)\r|^2+\frac{\l|\wt\phi(y)\r|^2}{\l(1+|y|^2\r)^2}\r)\mrm dy.$$
The weak limit $\wt\phi_0$ must solve $-\D\wt\phi_0(y)=\frac8{\l(1+|y|^2\r)^2}\wt\phi_0(y)$; however, since $\phi$ is orthogonal to $\mrm P\psi_1,\mrm P\psi_2$ and almost orthogonal to $\mrm P\psi_0$, the limit is $0$. This leads to a contradiction.
\end{proof}
We have the following estimate on the nonlinear term $\mcal N$, a sort of counterpart of Proposition \ref{rlp} on $\mcal R$ and Lemma \ref{lin} on $\mcal L$.\\
Such a result can be deduced by elementary inequalities and the estimates in Lemma \ref{lp}. The proof is roughly the same as Lemma B.2 in \cite{egp}, therefore it will be skipped.

\begin{lemma}\label{nonlin}$ $\\
Let $\mcal N$ be defined by \eqref{n}.\\
Then, for $p,q>1$, there exists $C>0$, not depending on $\rho$ nor on $\xi$ provided $\xi$ does not approach $\pa\O$, such that for any $\phi,\phi'\in\mbf K^\perp$
$$\|\mcal N(\phi)-\mcal N(\phi')\|_p\le C\rho^{{\frac2{pq}-2}}\|\phi-\phi'\|(\|\phi\|+\|\phi'\|)e^{C\l(\|\phi\|^2+\l\|\phi'\r\|^2\r)}.$$
In particular, if $\phi'=0$, one has
$$\|\mcal N(\phi)\|_p\le C\rho^{{\frac2{pq}-2}}\|\phi\|^2e^{C\|\phi\|^2}$$
\end{lemma}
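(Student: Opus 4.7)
\textbf{Proof plan for Lemma \ref{nonlin}.}
My plan is to reduce the estimate to two ingredients: (i) an elementary pointwise bound on the nonlinear remainder $f(t):=e^t-1-t$, and (ii) sharp $L^r$ control on the weight $\rho^2 V e^W$, which by the ansatz \eqref{w} is comparable to the standard bubble density $e^U$. With these in hand, the result comes out by one or two applications of H\"older together with Sobolev and Moser--Trudinger.

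First I would derive the pointwise inequality
\[
|f(a)-f(b)|\;\le\;|a-b|\,(|a|+|b|)\,\bigl(e^{|a|}+e^{|b|}\bigr),\qquad a,b\in\R,
\]
which follows from $f'(t)=e^t-1$ and the mean value theorem (alternatively, from the integral representation $f(a)-f(b)=\int_b^a(e^t-1)\,\mrm dt$). Applied to $\mcal N(\phi)-\mcal N(\phi')=\rho^2 Ve^W\bigl(f(\phi)-f(\phi')\bigr)$ this gives a pointwise bound whose integrand splits as (weight)$\,\times\,$(polynomial in $\phi,\phi'$)$\,\times\,$(exponentials). I would then apply H\"older with
$\frac1p=\frac1{pq}+\frac{q-1}{pq}$
to separate the weight $\rho^2 Ve^W$ (measured in $L^{pq}$) from the remaining factor (measured in $L^{pq/(q-1)}$); a further H\"older then distributes the remaining factor among $\phi-\phi'$, $|\phi|+|\phi'|$ and $e^{|\phi|}+e^{|\phi'|}$ in any three Lebesgue exponents whose reciprocals sum to $(q-1)/(pq)$.

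For the weight, Lemma \ref{wu} together with the formula $\mrm PU=U-\log(8\rho^2\tau^2)+8\pi H(\cdot,\xi)+O(\rho^2)$ yields $\rho^2 V e^W = e^U\bigl(1+o(1)\bigr)$ uniformly on $\ol\O$ (here I use that $\tau$ is bounded away from $0$ and $\infty$, that $H$, $V$ are bounded, and that the correction terms $\rho^2\tau^2\mrm P\wh w$ and $\rho^2\tau^2\wt W$ are $O(\rho^2\log^2\tfrac1\rho)$ in $L^\infty$). Consequently Lemma \ref{lp} with $s=0$ gives
\[
\bigl\|\rho^2 Ve^W\bigr\|_{pq}\;\le\;C\,\bigl\|e^U\bigr\|_{pq}\;=\;O\!\l(\rho^{\tfrac2{pq}-2}\r),
\]
which is precisely the dimensional factor appearing in the statement. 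The remaining factors are absorbed by standard two-dimensional tools: Sobolev embedding $H^1_0(\O)\hookrightarrow L^r(\O)$ for every $r<\infty$ controls $\|\phi-\phi'\|_{r_1}$ and $\|\,|\phi|+|\phi'|\,\|_{r_2}$ by the $H^1_0$-norms, while Moser--Trudinger gives the quantitative bound
\[
\bigl\|e^{|\phi|}\bigr\|_{r_3}\;\le\;C\,e^{C\|\phi\|^2},
\]
producing the exponential factor $e^{C(\|\phi\|^2+\|\phi'\|^2)}$. Combining the three estimates yields the first inequality of the lemma; the special case $\phi'=0$ is immediate since $\mcal N(0)=0$.

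The only mildly delicate point is matching the exponent: one must verify that the $(\rho V e^W)$ factor can indeed be estimated in $L^{pq}$ for some $q>1$ (so that $\|e^U\|_{pq}$ is finite and gives the polynomial decay $\rho^{2/(pq)-2}$), while leaving enough room in the remaining H\"older exponents for the Sobolev/Moser--Trudinger estimates to apply uniformly. Since we are free to take $p$ as close to $1$ as needed and $q>1$ arbitrary, no obstruction arises; the computation is essentially the same as \cite[Lemma B.2]{egp}, whence the proof can be omitted.
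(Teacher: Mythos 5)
Your proof is correct and follows essentially the same standard argument (pointwise bound on $f(t)=e^t-1-t$ via the mean value theorem, H\"older with the split $\frac1p=\frac1{pq}+\frac{q-1}{pq}$ to isolate the weight $\rho^2Ve^W$ in $L^{pq}$, then Sobolev embedding and Moser--Trudinger for the remaining factors) that the paper delegates to \cite[Lemma B.2]{egp}. One minor imprecision worth flagging: $\rho^2Ve^W$ is not $e^U\l(1+o(1)\r)$ uniformly on $\ol\O$ --- rather, using $\mrm PU=U-\log\l(8\rho^2\tau^2\r)+8\pi H(\cdot,\xi)+O(\rho^2)$ and the $L^\infty$-smallness of the correction terms, one gets $\rho^2Ve^W=e^U\,\frac{V(x)e^{8\pi H(x,\xi)}}{8\tau^2}\l(1+o(1)\r)$, whose prefactor equals $1+o(1)$ only near $x=\xi$ but is merely bounded elsewhere; since only this boundedness is used, the estimate $\l\|\rho^2Ve^W\r\|_{pq}\le C\l\|e^U\r\|_{pq}=O\l(\rho^{\frac2{pq}-2}\r)$ and hence the lemma still follow.
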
\

We are now in position to apply a fixed point theory on a suitably small ball of $\mbf K^\perp$. As we are not on the whole space $H^1_0(\O)$, we will solve equation \eqref{eqphi} only on the space $\mbf K^\perp$; in other words, on the right-hand side we will find a possibly non-zero element of $\mbf K$, depending on $\xi$. This issue will be addressed in the next section.

\begin{lemma}\label{contr}$ $\\
Let $\xi\in\O$ be given.\\
Then, there exists $\rho_0>0$ such that, if $p$ is suitably close to $1$, for any $\rho\in(0,\rho_0)$ there is a unique solution $(\phi_\xi,c_{1\xi},c_{2\xi})\in\mbf K^\perp\x\R\x\R$ to
\begin{equation}\label{eqphixi}
\mcal R+\mcal L\phi_\xi+\mcal N(\phi_\xi)=e^U(c_{1\xi}\psi_1+c_{2\xi}\psi_2).
\end{equation}
Moreover, there exists $C>0$, not depending on $\rho$ nor on $\xi$ provided $\xi$ does not approach $\pa\O$, such that
\begin{equation}\label{phinorma}
\|\phi_\xi\|\le C\rho^{\frac2p-1}\log\frac1\rho\l(\rho^2\log\frac1\rho+|\xi|^2\r).
\end{equation}
\end{lemma}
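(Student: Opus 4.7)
\textbf{Proof plan for Lemma \ref{contr}.} The plan is the standard Banach fixed-point reformulation of the Lyapunov-Schmidt reduction, using Lemma \ref{lin} to invert $\mcal L$ modulo the kernel, Proposition \ref{rlp} to control the error $\mcal R$, and Lemma \ref{nonlin} to control the nonlinear term $\mcal N$. Concretely, for each $\phi\in\mbf K^\perp$, Lemma \ref{lin} applied to $f=-\mcal R-\mcal N(\phi)\in L^p(\O)$ produces a unique triple $(T(\phi),c_1(\phi),c_2(\phi))\in\mbf K^\perp\x\R\x\R$ with
$$\mcal L(T(\phi))=-\mcal R-\mcal N(\phi)+e^U(c_1(\phi)\psi_1+c_2(\phi)\psi_2),\qquad\|T(\phi)\|\le C\log\frac1\rho\l(\|\mcal R\|_p+\|\mcal N(\phi)\|_p\r),$$
and fixed points of $T$ correspond exactly to solutions of \eqref{eqphixi}.

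I would then fix $p>1$ close to $1$ and $q>1$ close to $1$, and look for a fixed point in the closed ball
$$B_r:=\l\{\phi\in\mbf K^\perp:\|\phi\|\le r\r\},\q r:=C_0\rho^{\frac2p-1}\log\frac1\rho\l(\rho^2\log\frac1\rho+|\xi|^2\r),$$
for $C_0$ large enough (to absorb the constant coming from Proposition \ref{rlp} and Lemma \ref{lin}). To see that $T(B_r)\sub B_r$, note that by Proposition \ref{rlp} the contribution of $\mcal R$ to $\|T(\phi)\|$ is at most $r/2$ by choice of $C_0$; for the nonlinear contribution, Lemma \ref{nonlin} gives
$$\log\frac1\rho\cdot\|\mcal N(\phi)\|_p\le C\rho^{\frac2{pq}-2}r^2\log\frac1\rho\cdot e^{Cr^2},$$
which is $o(r)$ as $\rho\to0$ since $r=o(1)$ (so the exponential is $1+o(1)$) and $r\cdot\rho^{\frac2{pq}-2}\log\frac1\rho\to0$ once $p,q$ are chosen so close to $1$ that $\frac2p-1+\frac2{pq}-2>0$ (this is compatible with $|\xi|\le\mrm{diam}(\O)$, which is the only restriction on $\xi$). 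Exactly the same computation, using the Lipschitz form of Lemma \ref{nonlin}, gives
$$\|T(\phi)-T(\phi')\|\le C\log\frac1\rho\|\mcal N(\phi)-\mcal N(\phi')\|_p\le C\rho^{\frac2{pq}-2}r\log\frac1\rho\|\phi-\phi'\|\le\tfrac12\|\phi-\phi'\|$$
for $\rho$ small enough, so $T$ is a contraction on $B_r$. The Banach fixed-point theorem then yields a unique $\phi_\xi\in B_r$, and setting $c_{i\xi}:=c_i(\phi_\xi)$ produces the desired triple, with the norm bound \eqref{phinorma} built in by construction.

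The main technical point is really just the bookkeeping in the previous paragraph: verifying that the exponents of $\rho$ work out so that $\log\frac1\rho\cdot\|\mcal N(\phi)\|_p$ is strictly smaller than $r$ for $\phi\in B_r$, uniformly for $\xi$ away from $\pa\O$. All the genuine analytic work has already been done (invertibility of $\mcal L$, sharp estimate of $\mcal R$, Lipschitz estimate of $\mcal N$); what remains is purely algebraic manipulation of the $\rho$-exponents, handled by taking $p,q>1$ sufficiently close to $1$ as in Lemma B.2 of \cite{egp}. Uniqueness of $(\phi_\xi,c_{1\xi},c_{2\xi})$ in $\mbf K^\perp\x\R\x\R$ follows from the uniqueness clause in Lemma \ref{lin} together with the contraction property of $T$.
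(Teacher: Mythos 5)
Your proposal is correct and follows essentially the same route as the paper: Lemma \ref{lin} is used to reformulate \eqref{eqphixi} as a fixed-point problem for the map $T:\phi\mapsto(\text{solution of Lemma \ref{lin} with }f=-\mcal R-\mcal N(\phi))$, and then Proposition \ref{rlp} and Lemma \ref{nonlin} combined with a choice of $p,q>1$ close to $1$ (your condition $\frac2p+\frac2{pq}>3$ is algebraically the same as the paper's $q<\frac2{3p-2}$) show $T$ is a contraction on the ball of radius comparable to $\rho^{\frac2p-1}\log\frac1\rho(\rho^2\log\frac1\rho+|\xi|^2)$. The paper packages the inversion slightly differently, writing $T$ as $\wt{\mcal L}^{-1}\c\Pi\c(-\D)^{-1}(-\mcal R-\mcal N(\cdot))$ with $\wt{\mcal L}:=\Pi\c(-\D)^{-1}\c\mcal L$, but this is the same map.
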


\begin{proof}[Sketch of the proof]$ $\\
From Lemma \ref{lin} we can define an invertible operator
$$\wt{\mcal L}:=\Pi\c(-\D)^{-1}\c\mcal L:\mbf K^\perp\to\mbf K^\perp,$$
where $\Pi:H^1_0(\O)\to\mbf K^\perp$ is the standard projection in Hilbert spaces and $(-\D)^{-1}:L^p(\O)\to H^1_0(\O)$ is the inverse of the Laplacian with Dirichlet conditions. From Lemma \ref{lin} and Sobolev embeddings we also deduce $\l\|\wt{\mcal L}^{-1}\phi\r\|\le C\log\frac1\rho\|\phi\|$.\\
In view of this, any solution of \eqref{eqphixi} is a fixed point of the map $\mcal T:\mbf K^\perp\to\mbf K^\perp$ defined by
$$\mcal T:\phi\map\wt{\mcal L}^{-1}\c\Pi\c(-\D)^{-1}\l(-\mcal R-\mcal N(\phi)\r).$$
Proposition \ref{rlp} and Lemma \ref{nonlin} give the following estimates:
\begin{eqnarray*}
\|\mcal T(\phi)\|&\le&C\log\frac1\rho\l(\|\mcal R\|_p+\|\mcal N(\phi)\|_p\r)\le C\log\frac1\rho\l(\rho^{\frac2p-1}\l(\rho^2\log\frac1\rho+|\xi|^2\r)+\rho^{\frac2{pq}-2}\|\phi\|^2e^{C\|\phi\|^2}\r)\\
\|\mcal T(\phi)-\mcal T(\phi')\|&\le&C\log\frac1\rho\|\mcal N(\phi)-\mcal N(\phi')\|\le C\log\frac1\rho\rho^{\frac2{pq}-2}\|\phi-\phi'\|(\|\phi\|+\|\phi'\|)e^{C\l(\|\phi\|^2+\l\|\phi'\r\|^2\r)}.
\end{eqnarray*}
If we take $R$ large enough, $\rho$ small enough and $q<\frac2{3p-2}$, then
$$\|\phi\|\le R\rho^{\frac2p-1}\log\frac1\rho\l(\rho^2\log\frac1\rho+|\xi|^2\r)\q\q\q\To\q\q\q\|\mcal T(\phi)\|\le R\rho^{\frac2p-1}\log\frac1\rho\l(\rho^2\log\frac1\rho+|\xi|^2\r)$$
and moreover $\sup_{\phi\ne\phi'}\frac{\|\mcal T(\phi)-\mcal T(\phi')\|}{\|\phi-\phi'\|}<1$. Therefore $\mcal T$ is a contraction on a suitable ball in $\mbf K^\perp$ and has a fixed point $\phi$ which verifies \eqref{eqphixi} and \eqref{phinorma}.
\end{proof}\

\section{Finite-dimensional problem and conclusion}\

We will now discuss the proper choice of $\xi=\xi_\rho$ in order to conclude the proof of Theorem \ref{molt}.\\

In the previous section we showed that, for any $\xi$, one can find $\phi$ so that $\mcal R+\mcal L\phi+\mcal N(\phi)$ is a linear combination of $e^U\psi_1$ and $e^U\psi_2$. Therefore, to get a solution to \eqref{eqphi}, hence to \eqref{liou}, we are left to show that, for some $\xi$, $\mcal R+\mcal L\phi+\mcal N(\phi)$ is somehow orthogonal to each $\mrm P\psi_i$. In particular, since we are interesting in multiplicity of solution, we want to find at least two $\xi^1,\xi^2$ satisfying such an orthogonality condition.\\
The following proposition gives the leading term of the integral against $\mrm P\psi_i$. 
\begin{proposition}\label{eqxi}$ $\\
Let $\xi$ satisfy $|\xi|=O\l(\rho\sqrt{\log\frac1\rho}\r)$ and $\phi=\phi_\xi$ be as in Lemma \ref{contr}.\\
Then,
\begin{equation}\label{eq}
\int_\O(\mcal R+\mcal L\phi+\mcal N(\phi))\mrm P\psi_i=\pa_{\xi_i}\mcal P(\xi)-\eta_i+O\l(\rho^2\r)\end{equation}
where $\eta=\l(\rho^2\log\frac1\rho\r)\eta_0$ and $\eta_0\in\R^2$ is defined in \eqref{eta}.
\end{proposition}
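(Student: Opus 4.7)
Split the integral as
$$\int_\O\l(\mcal R+\mcal L\phi+\mcal N(\phi)\r)\mrm P\psi_i\,dx=I_\mcal R+I_\mcal L+I_\mcal N,$$
and show that the leading order $\pa_{\xi_i}\mcal P(\xi)-\eta_i$ is entirely contained in $I_\mcal R$, while $I_\mcal L$ and $I_\mcal N$ only contribute $O(\rho^2)$ (actually $o(\rho^2)$).

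\textbf{Step 1 (main term $I_\mcal R$).} Substitute the pointwise expansion \eqref{rx} for $\mcal R$, and write $\mrm P\psi_i=\psi_i+(\mrm P\psi_i-\psi_i)$, where $\mrm P\psi_i-\psi_i$ is harmonic in $\O$ with boundary datum $-\psi_i|_{\pa\O}=O(1)$ and hence uniformly $O(1)$. After the rescaling $y=(x-\xi)/(\rho\tau)$ and the elementary identity $\int_{\R^2}\frac{y_iy_j}{(1+|y|^2)^3}dy=\frac{\pi}{4}\d_{ij}$ one obtains the two key relations
$$\int_\O e^{U(x)}(x_j-\xi_j)\psi_i(x)\,dx=2\pi\d_{ij}+O(\rho^2),$$
$$\int_\O e^{U(x)}(x_j-\xi_j)\log\frac1{|x-\xi|}\psi_i(x)\,dx=2\pi\d_{ij}\log\frac1{\rho\tau}+O(1).$$
Multiplying the first by $\frac1{2\pi}\pa_{\xi_j}\mcal P(\xi)$ and summing over $j$ gives precisely $\pa_{\xi_i}\mcal P(\xi)$. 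Applying the second to each of the two pieces of $\Theta$ in \eqref{theta} and using the expansion $\tau^2=\frac{V(0)}{8}e^{8\pi H(0,0)}+O(\rho^2\log^2(1/\rho))$ reproduces exactly $-\eta_i=-\rho^2\log(1/\rho)(\eta_0)_i$ with $\eta_0$ as in \eqref{eta}: the $\pa_{x_i}\D\log V(0)$ and $(\pa^2_{x_1y_1}H+\pa^2_{x_2y_2}H)\pa_{x_i}H(0,0)$ terms appear with the correct prefactors $\frac{V(0)}{8}e^{8\pi H(0,0)}$ coming from $\tau^2$. All remainders — the integrals against $\mrm P\psi_i-\psi_i$ (for which the concentration of $e^U$ provides enough decay), the $O(\cdot)$ terms inside \eqref{rx}, and the lower-order parts of $\tau^2$ — are handled via Lemma \ref{lp} and absorbed into $O(\rho^2)$; the tightest piece is the mildly singular error $e^U\rho^4/|x-\xi|$ which becomes $\rho^2\int(1+|y|^2)^{-3}dy=O(\rho^2)$ after rescaling.

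\textbf{Step 2 (linear term $I_\mcal L$).} Since $\phi$ and $\mrm P\psi_i$ both lie in $H^1_0(\O)$, integration by parts combined with $-\D\mrm P\psi_i=e^U\psi_i$ gives
$$I_\mcal L=\int_\O\phi\,\mcal L\mrm P\psi_i\,dx=\int_\O\phi\l(\rho^2Ve^W\mrm P\psi_i-e^U\psi_i\r)dx.$$
By Lemma \ref{wu}, $\rho^2Ve^{W-U}$ differs from $1$ by an explicitly small quantity; together with $\mrm P\psi_i-\psi_i=O(1)$, the weight multiplying $\phi$ is small in $L^{p'}$ for $p'$ chosen large. H\"older's inequality together with $\|\phi\|=O(\rho^{2/p-1}\log(1/\rho)(\rho^2\log(1/\rho)+|\xi|^2))$ from Lemma \ref{contr} and the hypothesis $|\xi|=O(\rho\sqrt{\log(1/\rho)})$ yields $I_\mcal L=O(\rho^2)$.

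\textbf{Step 3 (nonlinear term $I_\mcal N$).} Lemma \ref{nonlin} with $p,q>1$ close to $1$ gives $\|\mcal N(\phi)\|_p\le C\rho^{2/(pq)-2}\|\phi\|^2$. Inserting the above estimate on $\|\phi\|$ and pairing with $\mrm P\psi_i$ via H\"older produces $I_\mcal N=o(\rho^2)$.

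\textbf{Main obstacle.} The real work is the bookkeeping in Step 1: each factor in \eqref{theta} — including the logarithm $\log(1/|x-\xi|)$ that, after integration, generates the $\log(1/\rho)$ in $\eta_i$ via the second integral above — must be matched with the corresponding piece of $\eta_0$ with the correct sign, while the leading-order expansion of $\tau^2$ must combine with the Lemma \ref{wu} expansion of $\rho^2Ve^{W-U}$ so that no remainder survives at order $\rho^2\log(1/\rho)$. A secondary subtlety is the borderline singular error $e^U\rho^4/|x-\xi|$: the extra $|x-\xi|/(\rho^2+|x-\xi|^2)$ carried by $\psi_i$ is just enough to keep this pairing at $O(\rho^2)$.
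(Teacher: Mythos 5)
Your proposal is correct and follows essentially the same route as the paper: the paper isolates the reduction $\int_\O(\mcal R+\mcal L\phi+\mcal N(\phi))\mrm P\psi_i=\int_\O\mcal R\psi_i+O(\cdot)$ into a preliminary Lemma (\ref{nophi}), then expands $\int_\O\mcal R\psi_i$ using \eqref{rx} and Lemma \ref{intpsi}, exactly as you do in Step 1 with Steps 2--3 playing the role of Lemma \ref{nophi}. The key mechanisms you identify --- the two integral identities $\int e^U(x_j-\xi_j)\psi_i\approx 2\pi\delta_{ij}$ and $\int e^U(x_j-\xi_j)\log\frac1{|x-\xi|}\psi_i\approx 2\pi\delta_{ij}\log\frac1\rho$, the matching of $\Theta$ with $\eta_0$ through $\tau^2\approx\frac{V(0)}8e^{8\pi H(0,0)}$, and the observation that $\psi_i$ supplies the extra $|x-\xi|$ needed to make the singular error $e^U\rho^4/|x-\xi|$ integrable at $O(\rho^2)$ --- are precisely those used in the paper.
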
\

We first prove that $\phi$ plays no role in the orthogonality condition, nor the projection $\mrm P$ does.\\
We stress that the choice of a refined ansatz is essential in order that $\phi$ is negligible in these computations, which in turn is essential to get explicit conditions on $\xi$.

\begin{lemma}\label{nophi}$ $\\
Let $\phi=\phi_\xi$ be as in Lemma \ref{contr}.\\
Then,
$$\int_\O(\mcal R+\mcal L\phi+\mcal N(\phi))\mrm P\psi_i=\int_\O\mcal R\psi_i+O\l(\rho^{\frac7p-6}\l(\rho^2\log\frac1\rho+|\xi|^2\r)\r).$$
\end{lemma}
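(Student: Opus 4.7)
The plan is to decompose
$$\int_\O(\mcal R+\mcal L\phi+\mcal N(\phi))\mrm P\psi_i=\int_\O\mcal R\mrm P\psi_i+\int_\O\mcal L\phi\cdot\mrm P\psi_i+\int_\O\mcal N(\phi)\mrm P\psi_i,$$
extract the main contribution $\int_\O\mcal R\psi_i$ from the first summand, and show that everything else fits inside the claimed remainder. The key observation is that $h_i:=\mrm P\psi_i-\psi_i$ is harmonic on $\O$ with boundary values $-\psi_i|_{\pa\O}$; since these are uniformly bounded in $\rho$ and in $\xi$ (as long as $\xi$ stays away from $\pa\O$), the maximum principle yields $\|h_i\|_{L^\infty(\O)}\le C$.

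Writing $\mrm P\psi_i=\psi_i+h_i$, H\"older immediately gives $\int_\O\mcal R\mrm P\psi_i=\int_\O\mcal R\psi_i+O(\|\mcal R\|_p)$, and Proposition \ref{rlp} makes this error $O\bigl(\rho^{2/p-1}(\rho^2\log\frac1\rho+|\xi|^2)\bigr)$, which for $p>1$ already fits inside the claimed bound since $2/p-1>7/p-6$. For $\int\mcal L\phi\cdot\mrm P\psi_i$ I would integrate by parts (both factors lie in $H^1_0(\O)$) to obtain $\int\phi\,\mcal L\mrm P\psi_i$, then use $-\D\mrm P\psi_i=e^U\psi_i$ to rewrite $\mcal L\mrm P\psi_i=e^Uh_i+(\rho^2Ve^W-e^U)\mrm P\psi_i$. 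The first piece is controlled by H\"older with $\|h_i\|_\infty\le C$, the bound $\|e^U\|_p=O(\rho^{2/p-2})$ from Lemma \ref{lp}, the Sobolev embedding $H^1_0(\O)\inc L^{p'}(\O)$, and the norm estimate on $\phi$ from Lemma \ref{contr}, yielding a term of order $\rho^{4/p-3}\log\frac1\rho(\rho^2\log\frac1\rho+|\xi|^2)$, absorbable because $4/p-3>7/p-6$ for $p>1$. The second piece requires an $L^p$ bound on $\rho^2Ve^W-e^U$: the pointwise expansion performed inside the proof of Proposition \ref{rlp} shows that $\rho^2Ve^W=e^U(1+\mcal E)\exp(\rho^2\tau^2\wh w+\Th+\text{errors})$, so this difference enjoys essentially the same $L^p$ estimate as $\mcal R$, and a triple H\"older with $\phi$ and $\mrm P\psi_i$ paired into $L^{2p'}$ (Sobolev for $\phi$; direct change of variables $y=(x-\xi)/(\rho\tau)$ for $\mrm P\psi_i$) closes the estimate. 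Finally, for $\int\mcal N(\phi)\mrm P\psi_i$ I would use H\"older with Lemma \ref{nonlin} and the $L^{p'}$ bound $\|\mrm P\psi_i\|_{p'}=O(\rho^{1-2/p})$; the quadratic dependence on $\|\phi\|$ makes this term strictly smaller than the linear ones once $q>1$ is chosen close enough to $1$.

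The main obstacle is tracking both polynomial exponents of $\rho$ and logarithmic factors across the triple H\"older in the middle step; the refined ansatz for $W$ is precisely what makes $\|\rho^2Ve^W-e^U\|_p$ enjoy a sharp estimate of the same quality as $\|\mcal R\|_p$, so most of the pointwise bounds one needs are already built into Section 3 and do not have to be redone here.
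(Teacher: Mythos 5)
Your decomposition, the use of $\|\mrm P\psi_i-\psi_i\|_\infty=O(1)$ via the maximum principle, the integration by parts on the linear term to expose $e^U(\mrm P\psi_i-\psi_i)+(\rho^2Ve^W-e^U)\mrm P\psi_i$, and the H\"older/Sobolev pairings are all exactly the paper's argument, so the overall structure and conclusion are right.

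One intermediate claim, however, is factually wrong and worth correcting even though it does not sink the proof: $\l\|\rho^2Ve^W-e^U\r\|_p$ is \emph{not} of the same quality as the refined $\|\mcal R\|_p$. The expansion you cite gives $\rho^2Ve^W-e^U=e^U\l(\mcal E(x,\xi)+O\l(\rho^2\log^2\frac1\rho\r)\r)$, and by Lemma \ref{de} the leading term $\mcal E(x,\xi)=O\l(|\xi|^2|x-\xi|+|x-\xi|^2\r)=O(|x-\xi|)$ is \emph{not} made small by the corrections $\mrm P\wh w,\wt W$; so $\l\|\rho^2Ve^W-e^U\r\|_p=O\l(\rho^{2/p-1}\r)$ (the paper's \eqref{wulp}), which is weaker than $\|\mcal R\|_p=O\l(\rho^{2/p-1}\l(\rho^2\log\frac1\rho+|\xi|^2\r)\r)$ by a full factor $\rho^2\log\frac1\rho+|\xi|^2$. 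The refined ansatz improves $\mcal R$ only through the cancellation between $\D W$ and $\rho^2Ve^W$ in \eqref{r}, and no such cancellation is available in $\rho^2Ve^W-e^U$. Your final bound still holds because this cruder factor is multiplied by $\|\phi\|$, which is where the refined ansatz (via $\|\mcal R\|_p$ in Lemma \ref{contr}) has already deposited the gain $\rho^2\log\frac1\rho+|\xi|^2$; the resulting $\rho^{4/p-3}\log\frac1\rho\l(\rho^2\log\frac1\rho+|\xi|^2\r)$ is absorbed by $O\l(\rho^{7/p-6}\l(\rho^2\log\frac1\rho+|\xi|^2\r)\r)$ since $4/p-3>7/p-6$ for $p>1$. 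So the mechanism you describe is off by one link in the chain, but the numerology closes.
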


\begin{proof}$ $\\
%As a first thing, one observes that, since $\mrm Pw_i-w_i=-w_i$ is uniformly bounded on $\pa\O$, then from the maximum principle $\mrm Pw_i-w_i$ is uniformly bounded also on the whole $\ol\O$.
First we observe that by the maximum principle $\mrm P\psi_i-\psi_i$ is uniformly bounded in $\ol\O$.\\
From this, we also get
$$\|\mrm P\psi_i\|^2=\int_\O\mrm P\psi_i(-\D\mrm P\psi_i)=\int_\O(\psi_i+O(1))e^U\psi_i=\int_\O e^U\psi_i^2+O(1)\int_\O e^U|\psi_i|=O\l(\frac1{\rho^2}\r).$$
Therefore, from the previous estimates and Proposition \ref{rlp},
\begin{equation}\label{rpsi}
\int_\O\mcal R\mrm P\psi_i-\int_\O\mcal R\psi_i=O\l(\|\mcal R\|_p\|\mrm P\psi_i-\psi_i\|_\frac p{p-1}\r)=O\l(\rho^{\frac2p-1}\l(\rho^2\log\frac1\rho+|\xi|^2\r)\r).
\end{equation}
As for the linear term, we integrate by parts and write
\begin{eqnarray}
\nonumber\int_\O(\mcal L\phi)\mrm P\psi_i&=&\int_\O(-\phi e^U\psi_i+\rho^2Ve^W\phi\mrm P\psi_i)\\
\nonumber&=&\int_\O\phi(e^U(\mrm P\psi_i-\psi_i)+\l(\rho^2Ve^W-e^U\r)\mrm P\psi_i)\\
\nonumber&=&O\l(\|\phi\|\l(\l\|e^U\r\|_p\|\mrm P\psi_i-\psi_i\|_\infty+\l\|\rho^2Ve^W-e^U\r\|_p|\mrm P\psi_i\|\r)\r)\\
\label{lpsi}&=&O\l(\rho^{\frac4p-3}\log\frac1\rho\l(\rho^2\log\frac1\rho+|\xi|^2\r)\r),
\end{eqnarray}
where we used the estimates $\l\|e^U\r\|_p=O\l(\rho^{\frac2p-2}\r)$, and then
\begin{eqnarray}
\nonumber\l\|\rho^2Ve^W-e^U\r\|_p&=&\l\|e^U\l(e^{W-U+2\log\rho+\log V}-1\r)\r\|_p\\
\nonumber&=&\l\|e^U\l(\mcal E(x,\xi)+O\l(\rho^2\log^2\frac1\rho\r)\r)\r\|_p\\
\nonumber&=&\l\|e^UO\l(\rho^2\log\frac1\rho+|x-\xi|\r)\r\|_p\\
\label{wulp}&=&O\l(\rho^{\frac2p-1}\r),
\end{eqnarray}
in view of Lemmas \ref{wu} and \ref{lp}.\\
Finally, from Lemma \ref{nonlin} we get
\begin{equation}\label{npsi}
\int_\O\mcal N(\phi)\mrm P\psi_i=O(\|\mcal N\|_p\|\mrm P\psi\|)=O\l(\rho^{\frac2{pq}-3}\|\phi\|^2\r)=O\l(\rho^{\frac2{pq}+\frac4p-5}\log^2\frac1\rho\l(\rho^2\log\frac1\rho+|\xi|^2\r)^2\r).
\end{equation}
If $q$ is chosen close enough to $1$, then we conclude by summing the estimates \eqref{rpsi}, \eqref{lpsi}, \eqref{npsi}.
\end{proof}\

We will also need some integral computations involving $\psi_i$, in a similar spirit to Lemma \ref{lp}. The proof of the following Lemma is an easy computation and will be skipped.

\begin{lemma}\label{intpsi}$ $\\
Let $\psi_i$ be defined by \eqref{psi}.\\
Then, for $p>1$ suitably close to $1$ the following estimates hold true:
\begin{eqnarray*}
\int_\O e^{U(x)}|x-\xi|^s|\psi_j(x)|\mrm dx&=&\l\{\begin{array}{ll}O\l(\rho^{s-1}\r)&\text{if }s<3\\O\l(\rho^2\log\frac1{\rho}\r)&\text{if }s=3\\O\l(\rho^2\r)&\text{if }s>3\end{array}\r.;\\
\int_\O e^{U(x)}(x_i-\xi_i)\psi_j(x)\mrm dx&=&2\pi\d_{ij}+O\l(\rho^2\r);\\
\int_\O e^{U(x)}(x_i-\xi_i)\log\frac1{|x-\xi|}\psi_j(x)\mrm dx&=&2\pi\d_{ij}\log\frac1\rho+O(1);\\
\end{eqnarray*}
\end{lemma}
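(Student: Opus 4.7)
\textbf{Proof strategy for Lemma \ref{intpsi}.} The core idea is a change of variables $y=(x-\xi)/(\rho\tau)$, which converts each integral over $\Omega$ into an integral over the blown-up domain $\widetilde\Omega_\rho:=(\Omega-\xi)/(\rho\tau)$. Under this substitution one has
$$e^{U(x)}=\frac{8}{\rho^2\tau^2\l(1+|y|^2\r)^2},\q\psi_j(x)=\frac{y_j}{\rho\tau\l(1+|y|^2\r)},\q|x-\xi|^s=(\rho\tau)^s|y|^s,\q dx=(\rho\tau)^2dy.$$
Since $\mathrm{dist}(0,\pa\widetilde\Omega_\rho)$ and the diameter of $\widetilde\Omega_\rho$ are both of order $1/\rho$, the first-order task is to extract the $\rho$-dependence from these global factors, then estimate a residual integral over $\widetilde\Omega_\rho$, which we will compare with the corresponding integral over all of $\R^2$.

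For the first estimate, the change of variables yields $e^{U(x)}|\psi_j(x)||x-\xi|^s\,dx=8(\rho\tau)^{s-1}\tfrac{|y_j||y|^s}{(1+|y|^2)^3}\,dy$. The integrand decays like $|y|^{s-5}$ at infinity, so in two dimensions it is integrable on $\R^2$ precisely when $s<3$, giving the bound $O(\rho^{s-1})$. When $s=3$, one integrates up to radius $R\sim 1/\rho$, producing a logarithmic factor and hence $O(\rho^2\log\tfrac1\rho)$; when $s>3$, the rescaled integral grows like $R^{s-3}$, which combined with the prefactor $\rho^{s-1}$ yields $O(\rho^2)$. Throughout, one simply splits $\int_{\widetilde\Omega_\rho}=\int_{\R^2}-\int_{\R^2\sm\widetilde\Omega_\rho}$ and controls the second piece by its tail.

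For the second estimate, an analogous substitution gives
$$\int_\Omega e^U(x_i-\xi_i)\psi_j\,dx=\int_{\widetilde\Omega_\rho}\frac{8\,y_iy_j}{(1+|y|^2)^3}\,dy.$$
If $i\ne j$ the integrand is odd in one variable and the integral over $\R^2$ vanishes; when $i=j$ a passage to polar coordinates together with the substitution $u=r^2$ yields the explicit value $2\pi$. In both cases the difference between $\widetilde\Omega_\rho$ and $\R^2$ is controlled by $\int_{|y|>c/\rho}|y|^{-4}dy=O(\rho^2)$ (using only that $\xi$ stays away from $\pa\Omega$). This gives $2\pi\delta_{ij}+O(\rho^2)$.

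For the third estimate, one writes $\log\tfrac1{|x-\xi|}=\log\tfrac1{\rho\tau}-\log|y|$, so the integral splits as
$$\log\frac1{\rho\tau}\int_{\widetilde\Omega_\rho}\frac{8\,y_iy_j}{(1+|y|^2)^3}\,dy-\int_{\widetilde\Omega_\rho}\frac{8\,y_iy_j\log|y|}{(1+|y|^2)^3}\,dy.$$
The first piece is $\log\tfrac1\rho$ times the integral just computed, giving $2\pi\delta_{ij}\log\tfrac1\rho+O(1)$ (absorbing the $\log\tau=O(1)$ error and the $O(\rho^2\log\tfrac1\rho)$ tail). The second piece has an integrand with an extra logarithmic factor but still decays like $|y|^{-4}\log|y|$ at infinity, hence is absolutely convergent on $\R^2$ and contributes only $O(1)$. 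Summing gives the claimed asymptotics. The only mildly delicate point is verifying that the boundary truncation errors behave as advertised uniformly in $\xi$, which reduces to the standing hypothesis that $\xi$ does not approach $\pa\Omega$.
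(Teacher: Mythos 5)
Your proposal is correct: the rescaling $y=(x-\xi)/(\rho\tau)$, the resulting explicit integrands $8(\rho\tau)^{s-1}|y|^s|y_j|(1+|y|^2)^{-3}$ and $8y_iy_j(1+|y|^2)^{-3}$, the value $2\pi\delta_{ij}$ of the second integral, and the tail estimates over $\R^2\sm\widetilde\Omega_\rho$ all check out, and this is exactly the routine computation the paper declares easy and omits. No gaps to report.
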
\

\begin{proof}[Proof of Proposition \ref{eqxi}]$ $\\
From the estimate \eqref{rx} we get
$$\int_\O\mcal R\psi_i=\int_\O e^{U(x)}\l(\frac1{2\pi}\langle\n\mcal P(\xi),x-\xi\rangle+\Th(x-\xi)+O\l(\frac{\rho^4}{|x-\xi|}+\rho|x-\xi|^2+\l(\rho^2\log^2\frac1\rho\r)|\xi|\sqrt{|x-\xi|}\r)\r)\psi_i(x)\mrm dx.$$
By Lemma \ref{intpsi}, the definitions \eqref{theta}, \eqref{eta} respectively of $\Th,\eta$ and the estimate $\tau^2=\frac{V(0)}8e^{8\pi H(0,0)}+O\l(\rho^2\log^2\frac1\rho\r)$ we deduce:
\begin{eqnarray*}
\int_\O e^{U(x)}\langle\n\mcal P(\xi),x-\xi\rangle\psi_i(x)\mrm dx&=&2\pi\pa_{\xi_i}\mcal P(\xi)+O\l(\rho^2\r);\\
\int_\O e^{U(x)}\Th(x-\xi)\psi_i(x)\mrm dx&=&\eta_i+O\l(\rho^2\r);\\
\int_\O e^{U(x)}O\l(\frac{\rho^4}{|x-\xi|}+\rho^2|x-\xi|+\l(\rho^2\log^2\frac1\rho\r)|\xi|\sqrt{|x-\xi|}\r)|\psi_i(x)|\mrm dx&=&O\l(\rho^2+\l(\rho^\frac32\log^2\frac1\rho\r)|\xi|\r).
\end{eqnarray*}
By the assumption on $|\xi|$, then the error in the last term and in Lemma \ref{nophi} is also $O\l(\rho^2\r)$, therefore:
$$\int_\O(\mcal R+\mcal L\phi+\mcal N(\phi))\mrm P\psi_i=\int_\O\mcal R\psi_i+O\l(\rho^2\r)=\pa_{\xi_i}\mcal P(\xi)-\eta_i+O\l(\rho^2\r).$$
\end{proof}\

We are finally in condition to prove the main result of the paper.

\begin{proof}[Proof of Theorem \ref{molt}]$ $\\
For sake of simplicity we only consider the case $K=2$, namely the equation $\n\mcal P(\xi)-\eta=0$ has two distinct stable solutions.\\
Therefore, since this is the leading term in \eqref{eq}, there will be two stable $\xi^1\ne\xi^2$ such that \eqref{eq} vanishes. Thanks to Lemma \ref{contr}, $\phi_{\xi^1},\phi_{\xi^2}$ also solve \eqref{eqphixi}, hence
\begin{eqnarray*}
0&=&\int_\O\l(\mcal R+\mcal L\phi_{\xi^i}+\mcal N\l(\phi_{\xi^i}\r)\r)\l(c_{1\xi^i}\mrm P\psi_1+c_{2\xi^i}\mrm P\psi_2\r)\\
&=&\int_\O e^U\l(c_{1\xi^i}\psi_1+c_{2\xi^i}\psi_2\r)\l(c_{1\xi^i}\mrm P\psi_1+c_{2\xi^i}\mrm P\psi_2\r)\\
&=&\int_\O\l|\n\l(c_{1\xi^i}\mrm P\psi_1+c_{2\xi^i}\mrm P\psi_2\r)\r|^2;\\
\end{eqnarray*}
since $\int_\O\n\mrm P\psi_1\cdot\n\mrm P\psi_2=O\l(\int_\O|\n\mrm P\psi_i|^2\r)$ for $i=1,2$ (see \cite{egp}, Lemma A.4), then $c_{1\xi^i}=c_{2\xi^i}=0$, namely the $\phi_{\xi^i}$'s solve \eqref{eqphi} and $u^i=W+\phi_{\xi^i}$ are solutions to \eqref{liou}.\\
Let us now show that each $u^i$ blows up at $0$. To this purpose, we need some estimates in $L^\infty$: from \eqref{wulp}, for $q>p>1$ we have:
\begin{eqnarray*}
\|-\D\phi\|_p&=&\l\|\rho^2Ve^W\phi+\mcal R+\mcal N(\phi)\r\|_p\\
&\le&\l(\l\|\rho^2Ve^W-e^U\r\|_q+\l\|e^U\r\|_q\r)\|\phi\|+\|\mcal R\|_p+\|\mcal N(\phi)\|_p\\
&\le&C\rho^{\frac2q+\frac2p-1}\log^2\frac1\rho;
\end{eqnarray*}
then $\|\phi\|_\infty\le C\|-\D\phi\|_p=o(1)$. Similarly, by construction,
$$\l\|\rho^2\tau^2\l(\mrm P\wh w\l(\frac{x-\xi}{\rho\tau}\r)+\wt W(x)\r)\r\|_\infty=o(1),$$
therefore all terms in $W$ but the main one vanish in $L^\infty$. Concerning the latter, we use the maximum principle to get
\begin{equation}\label{ui}
u^i(x)=\mrm PU(x)+o(1)=U(x)+\log\l(8\rho^2\tau^2\r)+8\pi H(x,\xi)+o(1)=\log\frac1{\l(\rho^2+|x-\xi^i|^2\r)^2}+O(1),
\end{equation}
which implies blow up in the sense of Definition \ref{blowup}.\\
Finally, after rescaling $\xi^i=\rho\sqrt{\log\frac1\rho}\xi^i_0$, one has $\n\mcal P\l(\xi_0^i\r)=\eta_0$ and $\frac1C\le\l|\xi^1_0-\xi^2_0\r|\le C$, therefore \eqref{ui} gives
\begin{eqnarray*}
\l|u^1\l(\xi^1_0\r)-u^2\l(\xi^1_0\r)\r|&=&\l|\log\frac1{\rho^4}-\log\frac1{\l(\rho^2+|\xi^2_0-\xi^1_0|^2\rho^2\log\frac1\rho\r)^2}\r|+O(1)\\
&=&2\log\l(\log\frac1\rho\r)+O(1)\\
&\us{\rho\to0}\to&+\infty,
\end{eqnarray*}
which proves $u^1\not\equiv u^2$.
\end{proof}\

\section{Higher-order degeneracy}\label{hod}

In the last section we discuss some extensions to Theorem \ref{molt} to some more general case.\\

Throughout all the paper we have assumed some degeneracy conditions on the first and second derivatives of $\mcal F$ at $0$ and non-degeneracy of its third derivatives, according to Definition \ref{ammis}.\\
This can be generalized by assuming to be zero also the third derivatives and all other derivatives up to order $N$, and then non-degeneracy on derivatives of order $N+1$. As before, all these conditions on the derivatives of $\mcal F$ in zero can be obtained by a proper choice of $V$. Precisely, we will make the following assumption on $V$.

\begin{definition}\label{ammis2}$ $\\
Let $V\in C^\infty\l(\ol\O\r)$ be a positive potential.\\
We say that $V$ is \emph{admissible of order $N$} if there exists $N\in\N$ such that the functional $\mcal F$ defined by \eqref{f} satisfies the following properties:
\begin{itemize}
\item $\pa^n_{\xi_{i_1}\dots\xi_{i_n}}\mcal F(0)=0$ for all $n=1,\dots,N$, $i_1,\dots,i_n=1,2$;
\item The map $\mcal P(\xi)$ defined by
\begin{equation}\label{p2}
\mcal P(\xi)=\frac{4\pi^2}{N+1}\l\langle D^{N+1}\mcal F(0),\xi,\dots,\xi\r\rangle=\frac{4\pi^2}{N+1}\sum_{i_1,\dots,i_N=1}^2\l(\pa^{N+1}_{\xi_{i_1}\dots\xi_{i_{N+1}}}\mcal F(0)\r)\xi_{i_1}\dots\xi_{i_N}
\end{equation}
has $\xi=0$ as its only critical point.
\end{itemize}
\end{definition}\

Most of the result obtained in the first part of this paper are still valid under assuming $V$ to be admissible of order $N$. In fact, in Sections 2,3,4 the non-degeneracy of third derivatives of $\mcal F$ are never used; moreover, Proposition \ref{eqxi}, the main result in Section $5$, can be generalized so that the new condition on $\xi$ is of the kind $\n\mcal P(\xi)=\eta$, with $\mcal P$ now being defined by \eqref{p2}.\\
The main difference between the two cases is that the vector $\eta_0$ defined in \eqref{eta} may vanish. In fact, the new assumption on $D^3\mcal F(0)$ gives no more freedom in the choice of $\eta_0$, which only depends on the derivatives of $H$ in $0$.\\
In particular, if $\O$ is simply connected, then due to the properties of $H$ one always gets $\eta_0\eq0$, therefore the only optimal $\xi$ is $0$ and multiplicity of blowing-up solutions fails. On the other hand, if $\O$ is \emph{not} simply connected, then $\eta_0$ is not zero, up to possibly translate the domain, hence construction of multiple solutions still works; furthermore, by suitably choosing $N$ and $\mcal P$, one can get as many solutions as desired.\\
This different phenomena affecting simply and multiply connected domain is somehow surprising, although consistent with well-known obstructions in the existence of solutions to \eqref{liou} in simply connected domains (see also \cite{bat}).\\
The picture is described by the following lemma:

\begin{lemma}\label{semplcon}$ $\\
Assume $V$ is an admissible potential of order $N\ge3$. Then, the vector $\eta_0$ defined by \eqref{eta} has the form
\begin{equation}\label{eta2}
\eta_0=\l(64\pi^3\l(\pa^2_{x_1y_1}H(0,0)+\pa^2_{x_2y_2}H(0,0)\r)\n_xH(0,0)+16\pi^2\n_x\l(\pa^2_{x_1y_1}H+\pa^2_{x_2y_2}H\r)(0,0)\r)\frac{V(0)}8e^{8\pi H(0,0)}.
\end{equation}
Moreover, if $\O$ is simply connected then $\eta_0=0$.\\
If $\O$ is multiply connected, then for some $\xi\in\O$ one has $\eta_0\ne0$ in the domain $\O+\xi$.
\end{lemma}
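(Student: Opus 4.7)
The plan is to establish the three assertions of the lemma in order.

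\textbf{Step 1: deriving \eqref{eta2}.} Admissibility of order $N\ge 3$ gives $\pa^3_{\xi_i\xi_j\xi_k}\mcal F(0)=0$ for all $i,j,k$. Setting $k=j$ and summing over $j=1,2$, and using $\mcal F(\xi)=H(\xi,\xi)+\frac1{4\pi}\log V(\xi)$, one obtains
$$\pa_{\xi_i}\D_\xi\bigl[H(\xi,\xi)\bigr]\Big|_{\xi=0}=-\frac1{4\pi}\pa_{x_i}(\D\log V)(0).$$
Since $H$ is harmonic separately in each variable and symmetric, the chain rule gives the classical identity $\D_\xi[H(\xi,\xi)]=2T(\xi,\xi)$ with $T(x,y):=\pa^2_{x_1y_1}H+\pa^2_{x_2y_2}H$, and a further differentiation combined with $T(x,y)=T(y,x)$ produces $\pa_{\xi_i}[2T(\xi,\xi)]|_{\xi=0}=4\pa_{x_i}T(0,0)$. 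The resulting relation $\pa_i(\D\log V)(0)=-16\pi\,\pa_{x_i}T(0,0)$ converts $-\pi\n(\D\log V)(0)$ in \eqref{eta} into $16\pi^2\n_xT(0,0)$, producing \eqref{eta2}.

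\textbf{Step 2: simply connected $\O$.} Let $f\colon\O\to\Di$ be a Riemann mapping. The standard identity
$$H^\O(x,y)=H^\Di(f(x),f(y))-\frac1{2\pi}\log\left|\frac{f(x)-f(y)}{x-y}\right|,\qquad H^\Di(u,v)=\frac1{2\pi}\log|1-\bar vu|,$$
writes $H^\O$ as $H^\Di\c f$ plus the real part of $-\frac1{2\pi}\log g$, where $g(x,y):=(f(x)-f(y))/(x-y)$ is nonvanishing and holomorphic on $\O\x\O$; simple connectedness of $\O\x\O$ provides a global branch, so $-\frac1{2\pi}\log g$ is holomorphic in $z=x_1+ix_2$ and in $w=y_1+iy_2$ separately. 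Writing $T=2(\pa_z\pa_{\bar w}+\pa_{\bar z}\pa_w)$ in complex coordinates, one sees that this operator annihilates the real part of any biseparately holomorphic function; hence the correction contributes $0$ to $T$. Using $\pa_u\pa_{\bar v}H^\Di(u,v)=-1/(4\pi(1-\bar vu)^2)$ together with $\pa_z=f'(x)\pa_u$ and $\pa_{\bar w}=\overline{f'(y)}\pa_{\bar v}$, I obtain
$$T(x,x)=-\frac{\la(x)^2}\pi,\qquad H(x,x)=-\frac1{2\pi}\log\la(x),\qquad\la(x):=\frac{|f'(x)|}{1-|f(x)|^2}.$$
Therefore $T(x,x)e^{4\pi H(x,x)}\eq-1/\pi$ is constant on $\O$. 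Differentiating this identity and using $\n[T(x,x)]=2\n_xT(x,x)$, $\n[H(x,x)]=2\n_xH(x,x)$ (both following from the symmetry of $T$ and $H$) yields $16\pi^2\n_xT(0,0)+64\pi^3T(0,0)\n_xH(0,0)=0$: this is the expression in brackets in \eqref{eta2}, so $\eta_0=0$.

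\textbf{Step 3: multiply connected $\O$.} Asking $\eta_0\ne0$ at the origin of the translated domain $\O-\xi_0$ is equivalent to $\n\Psi(\xi_0)\ne0$, where $\Psi(x):=T(x,x)e^{4\pi H(x,x)}$. So it suffices to exhibit a multiply connected $\O$ on which $\Psi$ is non-constant. I would take the annulus $\O=\{r_1<|x|<r_2\}$: by radial symmetry $H(x,x)$ and $T(x,x)$ are explicit functions of $|x|$ obtained from the series representation of the annular Green's function, and a direct radial-derivative check shows $\Psi$ is not constant, hence has points of non-vanishing gradient. The main obstacle of the whole argument is the cancellation in Step 2, which depends crucially both on simple connectedness of $\O\x\O$ (to define a single-valued $\log g$) and on the observation that $T$ annihilates real parts of biseparately holomorphic functions — a short check in the $\pa_z,\pa_{\bar z}$ calculus that marks precisely the point where the simply and multiply connected cases diverge.
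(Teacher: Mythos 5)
Your Steps 1 and 2 are correct, and Step 2 is a genuinely different argument from the paper's: where the paper quotes the classical fact (from \cite{bf}) that the Robin function $R(\xi)=H(\xi,\xi)$ on a simply connected domain solves the Liouville equation $-\D R=\frac2\pi e^{-4\pi R}$, and then differentiates that PDE, you re-derive that identity from scratch via the Riemann map: transferring $H$ to the disk, observing that $T=\pa^2_{x_1y_1}H+\pa^2_{x_2y_2}H=2(\pa_z\pa_{\bar w}+\pa_{\bar z}\pa_w)H$ kills the real part of the biseparately holomorphic correction term $-\frac1{2\pi}\log g$ (using simple connectedness of $\O\x\O$ to get a global branch of $\log g$), and computing $T(x,x)e^{4\pi H(x,x)}\equiv-1/\pi$ by explicit conformal calculus. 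This is a self-contained and instructive route that replaces a citation by a short complex-analytic computation, and differentiating this constancy indeed gives $16\pi^2\n_xT(0,0)+64\pi^3T(0,0)\n_xH(0,0)=0$, i.e.\ $\eta_0=0$. Step 1 matches the paper's manipulation of the vanishing third derivatives and the harmonicity of $H$, just phrased more compactly through the trace operator $T$.

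Step 3, however, has a genuine gap and in fact rests on a misreading of the statement. The lemma asserts that \emph{for every} multiply connected $\O$ there is a translation making $\eta_0\ne0$; it is not enough to ``exhibit a multiply connected $\O$ on which $\Psi$ is non-constant.'' Your plan only addresses the annulus, and even there the decisive computation (``a direct radial-derivative check shows $\Psi$ is not constant'') is asserted rather than carried out. What is actually needed is: on any multiply connected $\O$, the function $\Psi(x)=T(x,x)e^{4\pi H(x,x)}$ is non-constant — equivalently, the Robin function does not solve a Liouville equation $-\D R=c\,e^{-4\pi R}$ for any constant $c$. The paper gets this from the Bergman-kernel identity for $R$ on multiply connected domains (\cite{bf}, p.~211), which is precisely the obstruction your complex-analytic picture loses once $\log g$ ceases to have a single-valued branch. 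To repair Step 3 you would either have to invoke that classical result directly, or show via a covering-space argument that the biseparately holomorphic structure you used in Step 2 breaks in a quantitatively non-trivial way on every multiply connected domain; neither is present in your sketch.
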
\

\begin{proof}$ $\\
Under these assumptions, all the third order derivatives of $\mcal F$ vanish in $\xi=0$, therefore due to the symmetry of $H$:
\begin{eqnarray*}
0&=&\pa^3_{\xi_1\xi_1\xi_1}\mcal F(0)=2\pa^3_{x_1x_1x_1}H(0,0)+6\pa^3_{x_1x_1y_1}H(0,0)+\frac1{4\pi}\pa^3_{x_1x_1x_1}\log V(0);\\
0&=&\pa^3_{\xi_1\xi_1\xi_2}\mcal F(0)=2\pa^3_{x_1x_1x_2}H(0,0)+2\pa^3_{x_1x_1y_2}H(0,0)+4\pa^3_{x_1x_2y_1}H(0,0)+\frac1{4\pi}\pa^3_{x_1x_1x_2}\log V(0);\\
0&=&\pa^3_{\xi_1\xi_2\xi_2}\mcal F(0)=2\pa^3_{x_1x_2x_2}H(0,0)+2\pa^3_{x_2x_2y_1}H(0,0)+4\pa^3_{x_1x_2y_2}H(0,0)+\frac1{4\pi}\pa^3_{x_1x_2x_2}\log V(0);\\
0&=&\pa^3_{\xi_2\xi_2\xi_2}\mcal F(0)=2\pa^3_{x_2x_2x_2}H(0,0)+6\pa^3_{x_2x_2y_2}H(0,0)+\frac1{4\pi}\pa^3_{x_1x_2x_2}\log V(0).
\end{eqnarray*}
Now, due to the harmonicity of the derivatives of $H$, summing the first and third line, and then the third and fourth line, gives:
\begin{eqnarray*}
\frac1{4\pi}\pa_{x_1}\D\log V(0)&=&-4\pa_{x_1}\l(\pa^2_{x_1y_1}H+\pa^2_{x_2y_2}H\r)(0,0),\\
\frac1{4\pi}\pa_{x_2}\D\log V(0)&=&-4\pa_{x_2}\l(\pa^2_{x_1y_1}H+\pa^2_{x_2y_2}H\r)(0,0);
\end{eqnarray*}
putting these equivalences in the definition \eqref{eta} gives \eqref{eta2}.\\
If $\O$ is simply connected, then it is well-known that the Robin function $R(\xi)=H(\xi,\xi)$ solves $-\D R=\frac2\pi e^{-4\pi R}$ (see for instance \cite{bf}), therefore for any $\xi\in\O$ one gets
\begin{eqnarray*}
16\pi^2\n_x\l(\pa^2_{x_1y_1}H+\pa^2_{x_2y_2}H\r)(\xi,\xi)&=&8\pi^2\n_x\D R(\xi)\\
&=&-16\pi\n_x\l(e^{-4\pi R(\xi)}\r)\\
&=&64\pi^2e^{-4\pi R(\xi)}\n_xR(\xi)\\
&=&-32\pi^3\D R(\xi)\n_xR(\xi)\\
&=&-64\pi^3\l(\pa^2_{x_1y_1}H(\xi,\xi)+\pa^2_{x_2y_2}H(\xi,\xi)\r)\n_xH(\xi,\xi),
\end{eqnarray*}
and in particular for $\xi=0$ one gets $\eta_0=0$.\\
On the other hand, if $\O$ is not simply connected, then $R$ does not solve the previous Liouville-type equation but rather a different PDE involving Bergman kernel (for details see for instance \cite{bf}, p. 211). Therefore, there exists $\xi\in\Omega$ such that
$$64\pi^3\l(\pa^2_{x_1y_1}H(\xi,\xi)+\pa^2_{x_2y_2}H(\xi,\xi)\r)\n_xH(\xi,\xi)+16\pi^2\n_x\l(\pa^2_{x_1y_1}H+\pa^2_{x_2y_2}H\r)(\xi,\xi)\ne0,$$
namely $\eta_0\not=0$. Now, we choose the potential $V$ so that such a point $\xi$ is a critical point of $\mcal F$ with the required order of degeneracy so that Definition \ref{ammis2} is verified. It is clear that the origin satisfies all the assumptions once we relabel $\O+\xi$ by $\O$.
\end{proof}\

In view of the previous considerations, Theorem \ref{molt} can be extended only to multiply connected domains as follows.

\begin{theorem}\label{molt2}$ $\\
Let $\Omega$ be a multiply connected domain and $V\in C^\infty\l(\ol\O\r)$ be a positive admissible potential of order $N$ (in the sense of Definition \ref{ammis2}).\\
If the equation
\begin{equation}\label{sc2}
\n\mcal P(\xi)=\eta_0
\end{equation}
has $K$ distinct stable solutions, then, there exist $\rho_0>0$ and $K$ families of solutions $\l\{u_\rho^i\r\}$, $i=1,\dots,K$, to \eqref{liou} for $\rho\in(0,\rho_0)$, all blowing up at $0$ as $\rho$ goes to $0$ (in the sense of Definition \ref{blowup}) and such that $u_\rho^i\not\equiv u_\rho^j$ if $i\ne j$.\\
In particular, this holds true if $\mcal F$ has exactly $K+1$ nodal lines.
\end{theorem}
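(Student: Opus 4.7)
The plan is to mirror exactly the Lyapunov-Schmidt scheme used for Theorem \ref{molt}. A careful inspection of Sections \ref{s2}--5 reveals that the only place where the third-order non-degeneracy of Definition \ref{ammis} is actually invoked is in the algebraic identification of the leading term in Proposition \ref{eqxi}; every other ingredient -- the construction of the ansatz \eqref{w}, the local and global corrections $\mrm P\wh w$ and $\wt W$, the sharp error bound of Proposition \ref{rlp}, and the linear theory culminating in Lemma \ref{contr} -- uses only that $\n\mcal F(0)=0$ and $D^2\mcal F(0)=0$. Consequently, assuming $V$ admissible of order $N$ in the sense of Definition \ref{ammis2}, the very same ansatz and linear analysis still produce, for every $\xi$ not too close to $\pa\O$, a remainder $\phi_\xi\in\mbf K^\perp$ satisfying \eqref{eqphixi} and \eqref{phinorma}.

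The first change occurs in the expansion of Lemma \ref{de}: the higher-order degeneracy gives
$$\pa_{x_i}\mcal E(\xi,\xi)=4\pi\pa_{\xi_i}\mcal F(\xi)=\frac1{2\pi}\pa_{\xi_i}\mcal P(\xi)+O\l(|\xi|^{N+1}\r)=O\l(|\xi|^N\r),$$
where $\mcal P$ is now the homogeneous polynomial of degree $N+1$ defined in \eqref{p2}. Propagating this through the proof of Proposition \ref{rlp} upgrades the error bound to $\|\mcal R\|_p=O(\rho^{2/p-1}(\rho^2\log\frac1\rho+|\xi|^N))$, which balances when $|\xi|=O((\rho^2\log\frac1\rho)^{1/N})$. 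Redoing the computation behind Proposition \ref{eqxi} under this scaling is the core analytical task: the same integration against $\mrm P\psi_i$, combined with Lemmas \ref{nophi} and \ref{intpsi}, produces
$$\int_\O(\mcal R+\mcal L\phi+\mcal N(\phi))\mrm P\psi_i=\pa_{\xi_i}\mcal P(\xi)-\eta_i+o\l(\rho^2\log\frac1\rho\r),$$
with $\eta=(\rho^2\log\frac1\rho)\eta_0$ and $\eta_0$ given by the formula \eqref{eta2} of Lemma \ref{semplcon}. The modification of $\eta_0$ with respect to \eqref{eta} is precisely the one already worked out in Lemma \ref{semplcon}: the vanishing of $\pa^3\mcal F(0)$ enforces the identity $-\pi\n(\D\log V)(0)=16\pi^2\n_x(\pa^2_{x_1y_1}H+\pa^2_{x_2y_2}H)(0,0)$, which absorbs the explicit $V$-dependence into a purely geometric quantity.

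After the rescaling $\xi=(\rho^2\log\frac1\rho)^{1/N}\xi_0$, the homogeneity of $\n\mcal P$ of degree $N$ turns the reduced equation into
$$\n\mcal P(\xi_0)=\eta_0+o(1),$$
as $\rho\to0$. Under the multiply connected assumption on $\O$, Lemma \ref{semplcon} guarantees $\eta_0\ne0$ at a suitable base point, which is exactly the setting of the hypothesis. If the limit equation has $K$ stable solutions $\xi_0^1,\dots,\xi_0^K$, then Definition \ref{stable} produces, for each $i$, a solution $\xi^i=(\rho^2\log\frac1\rho)^{1/N}(\xi_0^i+o(1))$ to the full reduced equation. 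The Lagrange multipliers $c_{1\xi^i},c_{2\xi^i}$ appearing in \eqref{eqphixi} are killed exactly as in the proof of Theorem \ref{molt}, by testing the equation against $c_{1\xi^i}\mrm P\psi_1+c_{2\xi^i}\mrm P\psi_2$ and using the quasi-orthogonality of $\mrm P\psi_1,\mrm P\psi_2$; hence each $u_\rho^i:=W+\phi_{\xi^i}$ is an honest solution of \eqref{liou} blowing up at $0$.

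Pairwise distinctness $u_\rho^i\not\equiv u_\rho^j$ for $i\ne j$ follows from the same $L^\infty$ comparison used at the end of Theorem \ref{molt}: the separation $\frac1C\le|\xi_0^i-\xi_0^j|\le C$ forces $|\xi^i-\xi^j|^2\gg\rho^2$ for every $N\ge2$, so that $u_\rho^i(\xi^i)-u_\rho^j(\xi^i)$ diverges on a logarithmic scale. The \emph{in particular} clause follows from Proposition \ref{grado}: if $\mcal P$ has $K+1$ distinct nodal lines, then $|\deg(\n\mcal P,B_R,0)|\ge K$ for $R$ large, and standard homotopy invariance guarantees that for every $\eta_0\ne0$ sufficiently small the perturbed equation has at least $K$ stable solutions in $B_R$. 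I expect the most delicate technical point to be the upgrade of the error estimates under the weaker scaling $|\xi|\sim\rho^{2/N}(\log\frac1\rho)^{1/N}$: as $N$ grows, several remainders in Proposition \ref{rlp}, Lemma \ref{nophi} and Proposition \ref{eqxi} approach the leading order $\rho^2\log\frac1\rho$, and one has to take the Lebesgue exponents $p,q$ in the linear theory closer to $1$ so that every error factor still carries a genuine $o(1)$. This does not introduce essentially new difficulties, but bookkeeping is noticeably heavier than in the case $N=2$.
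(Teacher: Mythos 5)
Your proposal is correct and matches the paper's own proof essentially step by step: the linear theory, the error estimates (with $|\xi|^2$ replaced by $|\xi|^N$), the rescaling $\xi=(\rho^2\log\frac1\rho)^{1/N}\xi_0$, the use of Lemma \ref{semplcon} to identify $\eta_0$, and the $L^\infty$ separation argument are all the same as in the paper's Lemma \ref{eqxi2} and the proof of Theorem \ref{molt2}. The only small imprecision is in the last step: Proposition \ref{grado} guarantees $|1-M|$ stable solutions only for a.e.\ $\eta_0$ (not for every $\eta_0\ne 0$, except when $N=2$), so the conclusion from $M=K+1$ nodal lines requires the genericity caveat rather than holding for all small nonzero $\eta_0$ — but the paper's own treatment of this point is equally brief, so the approach is the same.
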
\

The only new tool in the proof of Theorem \ref{molt2} is the following generalization of Proposition \ref{eqxi}. Since we just need minor adaptations with respect to previous sections, proofs will be sketchy.

\begin{lemma}\label{eqxi2}$ $\\
Let $\xi$ satisfy $|\xi|=O\l(\rho^\frac2N\log^\frac1N\frac1\rho\r)$, $\phi_\xi$ be as in Lemma \ref{contr} and $\mcal P$ as in \eqref{p2}.\\
Then,
\begin{equation}\label{eq2}
\int_\O(\mcal R+\mcal L\phi+\mcal N(\phi))\mrm P\psi_i=\pa_{\xi_i}\mcal P(\xi)-\eta_i+O\l(\rho^2\r)
\end{equation}
\end{lemma}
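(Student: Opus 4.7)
The plan is to mimic the proof of Proposition \ref{eqxi}, keeping track of where the higher-order degeneracy of Definition \ref{ammis2} enters the argument. The key input is the analogue of Lemma \ref{de}: since the first $N$ derivatives of $\mcal F$ vanish at $0$, a Taylor expansion of $\mcal F$ and the identity $\pa_{x_i}\mcal E(\xi,\xi)=4\pi\pa_{\xi_i}\mcal F(\xi)$ yield
$$\pa_{x_i}\mcal E(\xi,\xi)=\frac{1}{2\pi}\pa_{\xi_i}\mcal P(\xi)+O\l(|\xi|^{N+1}\r),$$
with $\mcal P$ now the homogeneous degree-$(N+1)$ polynomial in \eqref{p2}. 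In particular $|\n_x\mcal E(\xi,\xi)|=O\l(|\xi|^N\r)$, and the scaling $|\xi|=O\l(\rho^{2/N}\log^{1/N}\frac1\rho\r)$ forces $|\n\mcal P(\xi)|=O\l(\rho^2\log\frac1\rho\r)$, which is exactly the size of $|\eta|$, as in the cubic case.

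Next, I would observe that the constructions in Section \ref{s2} and the pointwise identity \eqref{rx} in the proof of Proposition \ref{rlp} only exploit the second-order Taylor information of $\mcal E$ at the diagonal (which is absorbed by $\mrm P\wh w$ and $\wt W$); the third- and higher-order contributions enter only through $\Th$ and the leading term $\frac{1}{2\pi}\langle\n\mcal P(\xi),x-\xi\rangle$. Therefore \eqref{rx} remains valid verbatim in the current setting, with $\mcal P$ interpreted as in \eqref{p2}. Lemma \ref{nophi} is also unchanged, since its proof uses only the $L^p$-size of $\mcal R$ together with the general linear and nonlinear estimates of Lemmas \ref{lin} and \ref{nonlin}, none of which depend on the order of degeneracy of $\mcal F$.

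Finally, plugging \eqref{rx} into $\int_\O\mcal R\,\psi_i$ and applying Lemma \ref{intpsi} gives the two dominant contributions
$$\int_\O e^U\langle\n\mcal P(\xi),x-\xi\rangle\psi_i=2\pi\pa_{\xi_i}\mcal P(\xi)+O\l(\rho^2\r),\qquad \int_\O e^U\Th(x-\xi)\psi_i=\eta_i+O\l(\rho^2\r),$$
while every remainder term in \eqref{rx} contributes $O\l(\rho^2\r)$ after integration against $\psi_i$ under the scaling $|\xi|=O\l(\rho^{2/N}\log^{1/N}\frac1\rho\r)$; for example, the critical term $\l(\rho^2\log^2\frac1\rho\r)|\xi|\sqrt{|x-\xi|}$ integrated against $|\psi_i|$ gives $O\l(\rho^{3/2}\log^2\frac1\rho\,|\xi|\r)=o\l(\rho^2\r)$ for every $N\ge2$. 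Combining this with Lemma \ref{nophi} yields \eqref{eq2}. The only real obstacle is the bookkeeping needed to verify that all $|\xi|$-dependent error terms appearing in Lemmas \ref{wu}, \ref{wt}, Proposition \ref{rlp} and Lemma \ref{nophi} still give contributions $o\l(\rho^2\r)$ once tested against $\psi_i$; this holds because the new bound on $|\xi|$ is no larger than the one used in the $N=2$ case, so every such estimate remains at least as good as before.
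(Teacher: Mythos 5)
Your argument mirrors the paper's own proof sketch: update Lemma \ref{de} so that $\pa_{x_i}\mcal E(\xi,\xi)=\frac1{2\pi}\pa_{\xi_i}\mcal P(\xi)+O\l(|\xi|^{N+1}\r)$ with $\mcal P$ now of degree $N+1$, note that \eqref{rx} persists, note that the analogue of Lemma \ref{nophi} holds with $|\xi|^{N}$ replacing $|\xi|^2$, and integrate \eqref{rx} against $\psi_i$ via Lemma \ref{intpsi}. On the structure of the argument you and the authors agree.

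However, the sentence "the new bound on $|\xi|$ is no larger than the one used in the $N=2$ case, so every such estimate remains at least as good as before" is false, and it is the load-bearing step in your dismissal of the bookkeeping. Since $\rho\to0$ and $2/N<1$ for $N\ge3$, the scale $|\xi|\sim\rho^{2/N}\log^{1/N}\frac1\rho$ is \emph{larger} than the $N=2$ scale $\rho\sqrt{\log\frac1\rho}$, not smaller; the estimates therefore degrade as $N$ grows. Concretely, the error term you single out as critical, $O\l(\rho^{3/2}\log^2\frac1\rho\,|\xi|\r)$, equals $O\l(\rho^{3/2+2/N}(\log\frac1\rho)^{2+1/N}\r)$ at the balance scale $|\xi|^N\sim\rho^2\log\frac1\rho$. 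This is $o\l(\rho^2\r)$ only when $3/2+2/N>2$, i.e. for $N<4$. Already at $N=4$ it becomes $\rho^2(\log\frac1\rho)^{9/4}$, which in fact dominates the leading quantities $\pa_{\xi_i}\mcal P(\xi)$ and $\eta_i$, both of size $\rho^2\log\frac1\rho$. So your assertion that the critical term is $o\l(\rho^2\r)$ "for every $N\ge2$" does not hold; a correct treatment for $N\ge4$ would need a finer decomposition of the $|\xi|$-linear error terms inherited from Lemmas \ref{wh} and \ref{wt} (or an explicit restriction on $N$), which your proposal does not supply.
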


\begin{proof}[Sketch of the proof]$ $\\
Most of the results from Sections 2, 3, 4, 5 hold in the same form with the following differences.\\
In Lemma \ref{de}, $\mcal E(x,\xi)$ verifies
$$\pa_{x_i}\mcal E(\xi,\xi)=\frac1{2\pi}\pa_{\xi}\mcal P(\xi)+O\l(|\xi|^{N+1}\r)=O\l(|\xi|^N\r)\q\q\q\q\q\q\mcal E(x,\xi)=O\l(|\xi|^N|x-\xi|+|x-\xi|^2\r).$$
Moreover, since now $|\mcal P(\xi)|=O\l(|\xi|^{N+1}\r)$, then Proposition \ref{rlp} states
$$\|\mcal R\|_p=O\l(\rho^{\frac2p-1}\l(\rho^2\log\frac1\rho+|\xi|^N\r)\r),$$
whereas in Lemma \ref{lin} one has
$$\|\phi\|=O\l(\rho^{\frac2p-1}\log\frac1\rho\l(\rho^2\log\frac1\rho+|\xi|^N\r)\r).$$
In Section 5 an equivalent of Lemma \ref{nophi} holds, still with $|\xi|^N$ in place of $|\xi|^2$; therefore, for $|\xi|=O\l(\rho^\frac2N\log^\frac1N\frac1\rho\r)$ the error in the statement is negligible. Finally, since \eqref{rx} still holds true with $\mcal P$ now being defined by \eqref{p2}, then \eqref{eq} can be proved just like in Proposition \ref{eqxi} and the claim is proved.
\end{proof}\

Before proving Theorem \ref{molt2} it is interesting to say a few words about the case $N=1$.

\begin{remark}$ $\\
In the case $N=1$, Lemma \ref{eqxi2} would give the formula:
$$\int_\O(\mcal R+\mcal L\phi+\mcal N(\phi))\mrm P\psi_i=4\pi^2\l(D^2\mcal F(0)\xi\r)_i-\eta_i+O\l(\rho^2\r).$$
If $D^2\mcal F(0)$ is invertible, the only zero to the leading order term is $\xi=\frac1{4\pi^2}\l(D^2\mcal F(0)\r)^{-1}\eta$.\\
Therefore, we would not get any multiplicity of solution, consistently with the uniqueness result proved by \cite{bjly2}.
\end{remark}\

\begin{proof}[Sketch of the proof of Theorem \ref{molt2}]$ $\\
Since the equation \eqref{eq2} has $K$ stable solutions, then due to Lemma \ref{eqxi2} we get $\xi^i$ with \eqref{eq2} vanishing, hence solutions $u^i$ to \eqref{liou} for $i=1,\dots,K$.\\
As in the proof of Theorem \ref{molt}, \eqref{ui} shows that the sequence is blowing up; then, one can prove $u^i\not\equiv u^j$ for $i\ne j$ by writing $\xi^i=\l(\rho^\frac2N\log^\frac1N\frac1\rho\r)\xi^i_0$ with $\frac1C\le\l|\xi^1_0-\xi^2_0\r|\le C$ and
\begin{eqnarray*}
\l|u^1\l(\xi^1_0\r)-u^2\l(\xi^1_0\r)\r|&=&\l|\log\frac1{\rho^4}-\log\frac1{\l(\rho^2+|\xi^2_0-\xi^1_0|^2\rho^\frac4N\log^\frac2N\frac1\rho\r)^2}\r|+O(1)\\
&=&2\log\l(\rho^{\frac4N-2}\log^\frac2N\frac1\rho\r)+O(1)\\
&\us{\rho\to0}\to&+\infty.
\end{eqnarray*}
To prove the last part, we use Lemma \ref{semplcon}, which ensures that $\eta_0\ne0$ for some $\xi\in\O$; up to translating, it will not be restrictive to assume that this occurs in $\xi=0$. Finally, thanks to Proposition \ref{grado}, we may also assume that \eqref{sc2} has $K$ stable solutions.
\end{proof}\

\appendix\

\section{Appendix: Degree computations}\

In this appendix we compute the degree of a generic non-degenerate polynomial $\mcal P$ like the ones in \eqref{p} and \eqref{p2}.\\
We believe that this result may be already known, as we consider rather well-studied objects, but we could not find any references.

\begin{proposition}\label{grado}$ $\\
Let $V$ be an admissible potential of order $N$ (in the sense of Definition \ref{ammis2}), let $\mcal P$ be defined by \eqref{p2} and let $M\le N+1$ be the number of nodal lines of $\mcal P(\xi)$.\\
Then, for any $\eta_0\in\R^2$ there exists $R>0$ such that
\begin{equation}\label{gradop}
\deg(\n\mcal P,B_R(0),\eta_0)=1-M.
\end{equation}
Moreover, for a.e. $\eta_0\in\R^2$ the equation $\n\mcal P=\eta_0$ has at least $|1-M|$ distinct stable solutions. If $N=2$ this holds true for any $\eta_0\ne0$.
\end{proposition}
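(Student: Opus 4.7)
The plan is to reduce the Brouwer degree to a winding number on the unit circle using the homogeneity of $\mcal P$, and then evaluate this winding by tracking $\n\mcal P$ sector by sector in polar coordinates.

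First, since $0$ is the only critical point of $\mcal P$ (Definition \ref{ammis2}) and $\n\mcal P$ is homogeneous of degree $N$, the quantity $c:=\min_{|\xi|=1}|\n\mcal P(\xi)|$ is strictly positive, so $|\n\mcal P(\xi)|\ge c|\xi|^N$ on $\R^2$. Choosing $R$ so that $cR^N>|\eta_0|$, the straight-line homotopy $h_t(\xi)=\n\mcal P(\xi)-t\eta_0$ does not vanish on $\pa B_R(0)$ for $t\in[0,1]$. Homotopy invariance of the Brouwer degree then gives
\[
\deg\l(\n\mcal P,B_R(0),\eta_0\r)=\deg\l(\n\mcal P,B_R(0),0\r),
\]
and we are reduced to computing the index of the isolated zero of $\n\mcal P$ at the origin.

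Second, I would decompose $\n\mcal P$ in polar form. Setting $q(\theta):=\mcal P(\cos\theta,\sin\theta)$ so that $\mcal P(\xi)=|\xi|^{N+1}q(\arg\xi)$, Euler's identity $\langle\xi,\n\mcal P(\xi)\rangle=(N+1)\mcal P(\xi)$ yields the radial component $(N+1)|\xi|^Nq(\theta)$, while differentiating in $\theta$ yields the tangential component $|\xi|^Nq'(\theta)$. Identifying $\R^2\simeq\C$, a direct computation gives
\[
\n\mcal P(\cos\theta,\sin\theta)=\bigl((N+1)q(\theta)+iq'(\theta)\bigr)e^{i\theta},
\]
so the winding of $\n\mcal P$ along $\pa B_R(0)$, which equals $\deg(\n\mcal P,B_R(0),0)$, is $1+W$, with $W$ the winding of $\theta\mapsto(q(\theta),q'(\theta))$ around the origin.

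Third, since $\n\mcal P$ does not vanish on $\S^1$, $q$ and $q'$ cannot vanish simultaneously, so every zero of $q$ is simple; as the nodal lines of $\mcal P$ correspond bijectively to antipodal pairs of zeros of $q$, there are exactly $2M$ such zeros $\theta_1<\dots<\theta_{2M}$ on $[0,2\pi)$. On each open arc $(\theta_k,\theta_{k+1})$ the sign of $q$ is constant and alternates at each $\theta_k$, so the curve $(q,q')$ lies strictly inside one open half-plane on each arc and crosses the $y$-axis at each $\theta_k$ with alternating signs of $q'(\theta_k)$. A continuous lift of the argument on such an arc is confined to the angular range of the half-plane---an open interval of length $\pi$---with boundary values $\pm\pi/2$ modulo $2\pi$; the alternation of the signs of $q'$ at consecutive $\theta_k$'s forces the change to equal $-\pi$ on every arc. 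Summing over the $2M$ arcs yields $W=-M$, and therefore $\deg(\n\mcal P,B_R(0),\eta_0)=1-M$, which is \eqref{gradop}.

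Fourth, for the stability claims I would invoke Sard's theorem: for a.e.\ $\eta_0\in\R^2$ every preimage $\xi_j\in(\n\mcal P)^{-1}(\eta_0)\cap B_R(0)$ is non-degenerate, hence the local index $\deg(\n\mcal P,B_\e(\xi_j),\eta_0)=\operatorname{sign}\det D^2\mcal P(\xi_j)\in\{\pm1\}$ is nonzero and $\xi_j$ is stable in the sense of Definition \ref{stable}; since the indices sum to $1-M$, at least $|1-M|$ distinct stable solutions exist. For $N=2$ and $M=3$ any such $\mcal P$ is, up to a scalar, of the form $\mcal P_0(A\,\cdot\,)$ for some $A\in GL_2(\R)$ with $\mcal P_0(\xi):=\xi_1^3-3\xi_1\xi_2^2$ (since $PGL_2(\R)$ acts 3-transitively on lines through the origin); a direct computation gives $\n\mcal P_0(\xi)=3\overline{(\xi_1+i\xi_2)^2}$ and $\det D^2\mcal P_0(\xi)=-36|\xi|^2$, so every $\eta_0\ne0$ has exactly two non-degenerate (hence stable) preimages under $\n\mcal P_0$; the conclusion transfers to $\mcal P$ by the chain rule, while the case $M=1$ is trivial since $|1-M|=0$.

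The main obstacle I expect lies in the third paragraph: making the continuous-lift argument fully rigorous requires ruling out loops of $(q,q')$ inside a single half-plane that might contribute to the winding; this is resolved by noting that the origin lies outside each open half-plane, so no such loop can enclose it and the lift must stay inside an interval of length $\pi$.
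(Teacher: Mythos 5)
Your argument is correct and takes a genuinely different route from the paper. The paper reduces the computation by an explicit homotopy (Lemma \ref{omot}) in polar coordinates from $\n\mcal P$ to the model field $\n\mcal Q_M$ with $\mcal Q_M(\xi)=\Re\l((\xi_1+\imath\xi_2)^M\r)$, and then solves $\n\mcal Q_M=\eta_0$ in closed form (Lemma \ref{gradore}), summing the $M-1$ local indices after showing each Hessian determinant is negative; for the a.e.\ non-degeneracy it carries out an explicit Hessian computation (Lemma \ref{bendef}) rather than invoking Sard, and for the $N=2$, $M=3$ case it exploits the evenness of $\n\mcal P$ to produce the antipodal pair $\pm\xi$. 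You instead compute the index of the isolated zero at the origin directly: Euler's identity gives the clean polar form $\n\mcal P=|\xi|^Ne^{\imath\t}\l((N+1)q(\t)+\imath q'(\t)\r)$ with $q(\t)=\mcal P(\cos\t,\sin\t)$, the factor $e^{\imath\t}$ contributes $+1$ to the winding, and on each of the $2M$ arcs between consecutive simple zeros of $q$ the vector $\l((N+1)q,q'\r)$ is confined to one open half-plane and runs from one intercept of the $q'$-axis to the opposite one, contributing exactly $-\pi$ to the argument; this yields $W=-M$ without ever appealing to a model polynomial. Your half-plane justification in the last paragraph is exactly the point that must be made, and it is correct since the origin is excluded from each open half-plane. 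For $N=2$, reducing to $\mcal P_0(\xi)=\xi_1^3-3\xi_1\xi_2^2$ via sharp $3$-transitivity of $PGL_2(\R)$ on $\mathbb P^1(\R)$ and computing $\n\mcal P_0=3\overline{(\xi_1+\imath\xi_2)^2}$, $\det D^2\mcal P_0=-36|\xi|^2$, is an equally valid and arguably more self-contained treatment than the paper's parity argument, since it gives non-degeneracy (hence stability) for free. The trade-off is that the paper's homotopy to $\mcal Q_M$ packages the argument so that the same factorization handles both the degree computation and, implicitly, the $M=0$ case via $\mcal Q_\pm$; your winding argument handles $M=0$ trivially as an empty sum, so nothing is lost, and the overall proof is shorter.
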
\

First of all, we show that it makes actually sense to compute the degree of any admissible $\mcal P$.

\begin{lemma}\label{bendef}$ $\\
Assume $V$ is admissible of order $N$.\\
Then, for any $\eta_0\ne0$ there exists $R>0$ such that any solution to $\n\mcal P(\xi)=\eta_0$ is contained in the open disk $B_R(0)$.\\
Moreover, for a.e. $\eta_0\in\R^2$ all the zeros $\xi$ to $\n\mcal P(\xi)-\eta_0$ are non-degenerate.
\end{lemma}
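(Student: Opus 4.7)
The plan is to handle the two assertions separately, using homogeneity of $\mcal P$ for the first and Sard's theorem for the second.

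For the boundedness of the zero set, observe that since $\mcal P$ is a homogeneous polynomial of degree $N+1$, its gradient $\n\mcal P$ is a homogeneous map of degree $N$, so that $\n\mcal P(r\theta)=r^N\n\mcal P(\theta)$ for every $r\ge 0$ and every unit vector $\theta$. By Definition \ref{ammis2}, $\xi=0$ is the \emph{only} critical point of $\mcal P$, so $\n\mcal P$ does not vanish on the unit circle; since the latter is compact, $m:=\min_{|\theta|=1}|\n\mcal P(\theta)|>0$. It then follows that $|\n\mcal P(\xi)|\ge m|\xi|^N$ for all $\xi\in\R^2$, and any solution to $\n\mcal P(\xi)=\eta_0$ must satisfy $|\xi|\le(|\eta_0|/m)^{1/N}$. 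Choosing any $R$ strictly larger than this quantity yields the first assertion.

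For the non-degeneracy part, the plan is to apply Sard's theorem to the smooth (in fact polynomial) map $\n\mcal P:\R^2\to\R^2$, whose set of critical values has Lebesgue measure zero. A point $\eta_0$ is a critical value of $\n\mcal P$ precisely when there exists $\xi$ with $\n\mcal P(\xi)=\eta_0$ and $\det D^2\mcal P(\xi)=0$, i.e.\ exactly when some zero of $\n\mcal P-\eta_0$ is degenerate. Hence for almost every $\eta_0\in\R^2$ every zero of $\n\mcal P-\eta_0$ is non-degenerate, which is the claim.

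The argument presents no real obstacle: both ingredients, homogeneity combined with compactness of the unit circle for the first part and Sard's theorem for the second, are standard. The only point to keep in mind is that admissibility of order $N$ is being used in the sharper form ``$\xi=0$ is the only critical point of $\mcal P$'' (and not just that $0$ is \emph{a} critical point), which is exactly what ensures that $\n\mcal P$ is nowhere vanishing away from the origin.
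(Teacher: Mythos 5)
Your proof is correct, and both parts are handled by a genuinely different and more streamlined argument than the paper's. The paper works in polar coordinates: it writes $\mcal P(\xi)-\langle\eta_0,\xi\rangle = r^{N+1}p(t)-rr_0\cos(t-t_0)$ for a trigonometric polynomial $p$, derives from $\n\mcal P(\xi)=\eta_0$ the identity $(N+1)^2p(t)^2+p'(t)^2 = r_0^2/r^{2N}$, and uses positivity of the left-hand side (which is exactly the admissibility condition) to bound $r$. For the non-degeneracy statement the paper then computes the Hessian determinant explicitly in polar coordinates, shows it vanishes only at finitely many angles $t$ unless $p$ has a forbidden form, and concludes by checking that each such $t$ can be paired with at most two bad angles $t_0$. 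Your argument replaces the first computation by the observation that $\n\mcal P$ is homogeneous of degree $N$ and never vanishes on the unit circle, which gives the clean lower bound $|\n\mcal P(\xi)|\ge m|\xi|^N$; and it replaces the whole Hessian analysis by a direct appeal to Sard's theorem, noting that the critical values of the polynomial map $\n\mcal P:\R^2\to\R^2$ are exactly the $\eta_0$ for which some zero of $\n\mcal P-\eta_0$ is degenerate. The trade-off is that the paper's polar-coordinate machinery (the expressions \eqref{gradp}, \eqref{pt}, \eqref{dethess}) is reused in Lemmas \ref{omot} and \ref{gradore}, so the explicit computations are not wasted, whereas your approach is shorter and more conceptual if one only wants Lemma \ref{bendef} itself.
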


\begin{proof}$ $\\
In polar coordinates $(r,t)$ we can write $\xi=re^{\imath t}$, $\eta_0=r_0e^{\imath t_0}$ and $$\mcal P(\xi)-\langle\eta_0,\xi\rangle=r^{N+1}p(t)-rr_0\cos(t-t_0),$$
for some trigonometric polynomial $p$ of degree $N+1$. Since $\mcal P$ has $\xi=0$ as its only critical point, there will be no values of $t$ solving $p(t)=p'(t)=0$.\\
Solutions to $\n\mcal P=\eta_0$ verify
\begin{equation}\label{gradp}
\l\{\begin{array}{l}(N+1)r^Np(t)-r_0\cos(t-t_0)=0\\r^{N+1}p'(t)+rr_0\sin(t-t_0)=0\end{array}\r.,
\end{equation}
which yields
\begin{equation}\label{pt}
(N+1)^2p(t)^2+p'(t)^2=\l(\frac{r_0\cos(t-t_0)}{r^N}\r)^2+\l(-\frac{r_0\sin(t-t_0)}{r^N}\r)^2=\frac{r_0^2}{r^{2N}}.
\end{equation}
Since the left-hand side is always positive on $[0,2\pi]$, it will be bounded from below by some positive constant $C>0$, hence we deduce $r\le\l(\frac{r_0^2}C\r)^\frac1{2N}$ for any critical point.\\
To show the non-degeneracy, we see that the Hessian matrix on critical points is
$$\l(\begin{array}{cc}N(N+1)r^{N-1}p(t)&(N+1)r^Np'(t)+r_0\sin(t-t_0)\\(N+1)r^Np'(t)+r_0\sin(t-t_0)&r^{N+1}p''(t)+rr_0\cos(t-t_0)\end{array}\r).$$
Evaluating in the critical points satisfying \eqref{gradp}, the determinant equals
\begin{equation}\label{dethess}
Nr^{2N}\l((N+1)p(t)p''(t)-Np'(t)^2+(N+1)^2p(t)^2\r)=N(N+1)^2r^{2N}p(t)^\frac{2N+1}{N+1}\l(\l(p(t)^\frac1{N+1}\r)''+(N+1)^2p(t)^\frac1{N+1}\r).
\end{equation}
This can identically vanish only if $p(t)=C(\cos((N+1)t+\t))^{N+1}$ for some $C,\t$; however, if $p(t)$ had this form, then $\mcal P(\xi)$ would have non-zero critical points, hence $\mcal F$ would not satisfy the assumptions from Definition \ref{ammis}. Therefore, the Hessian determinant can have only a finite number of zeros.\\
On the other hand, by plugging \eqref{pt} in the first equation of \eqref{gradp} we deduce that any critical point satisfies
\begin{equation}\label{eqp}
(N+1)p(t)=\sqrt{(N+1)^2p(t)^2+p'(t)^2}\cos(t-t_0).
\end{equation}
For each zero $t$ to \eqref{dethess} the last equation equals zero for at most two values of $t_0$; since one has degenerate critical points only if $t$ is a zero to the Hessian determinant and $t_0$ solves \eqref{eqp}, this an occur for finitely many of $t_0$, that is only for $\eta_0$ in a negligible set of the plane.
\end{proof}\

To compute the degree of the polynomial $\mcal P$, we exploit a homotopical equivalence with some very well-known polynomials $\mcal Q_M$, given by the real part of complex powers.

\begin{lemma}\label{omot}$ $\\
Let $V$ be admissible of order $N$ such that $\mcal P(\xi)$ has $M\ge1$ distinct nodal lines and let $\mcal Q_M(\xi)$ be defined by
\begin{equation}\label{pm}
\mcal Q_M(\xi)=\Re\l((\xi_1+\imath\xi_2)^M\r)=\sum_{j=0}^{\l\lfloor\frac M2\r\rfloor}(-1)^j\bin{M}{2j}\xi_1^{M-2j}\xi_2^{2j}.
\end{equation}
Then, for any $R>0$ there exists a homotopical equivalence $F:[0,1]\x\R^2\to\R^2$ such that $F(0,\xi)=\n\mcal P(\xi)$, $F(1,\xi)=\n\mcal Q_M(\xi)$ and any solution to $F(s,\xi)=0$ is contained in the open disk $B_R(0)$ for $s\in[0,1]$.\\
In the case $M=0$ when $\mcal P(\xi)\ne0$ for any $\xi\ne0$, then the same holds true with
\begin{equation}\label{qpm}
\l\{\begin{array}{ll}\mcal Q_+(\xi)=|\xi|^2&\text{if }\mcal P(\xi)>0\,\fa\xi\ne0\\\mcal Q_-(\xi)=-|\xi|^2&\text{if }\mcal P(\xi)<0\,\fa\xi\ne0\end{array}\r..
\end{equation}
\end{lemma}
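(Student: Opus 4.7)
The plan is to work in polar coordinates on $\R^2\cong\C$, to reduce the homotopy to matching winding numbers on the unit circle, and then to extend radially using the homogeneities of the two gradient fields.

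Writing $\xi=re^{it}$ and $\mcal P(\xi)=r^{N+1}p(t)$ with $p(t):=\mcal P(\cos t,\sin t)$, the assumption that $0$ is the only critical point of $\mcal P$ amounts to saying that $p$ and $p'$ share no zero (as already noted in the proof of Lemma \ref{bendef}). A direct computation, identifying $\R^2$ with $\C$, yields
$$\n\mcal P(re^{it})=r^N e^{it}\bigl[(N+1)p(t)+ip'(t)\bigr],\qquad \n\mcal Q_M(re^{it})=M r^{M-1}e^{-i(M-1)t},$$
both nonzero for $r>0$. On the unit circle, the second map manifestly has winding number $1-M$ around the origin, and the task reduces to establishing the same winding number for the first and then building a continuous non-vanishing interpolation on $S^1$.

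The $M$ distinct nodal lines of $\mcal P$ correspond to $2M$ simple zeros of $p$ on $[0,2\pi)$, at each of which $p$ changes sign. Between two consecutive such zeros $t_i<t_{i+1}$, $p$ is sign-definite, while $p'(t_i)$ and $p'(t_{i+1})$ have opposite signs. Consequently the curve $\tau(t):=(N+1)p(t)+ip'(t)$ runs from one imaginary-axis crossing to the next through a single open half-plane $\{\Re\tau\ne 0\}$, so its argument moves by exactly $-\pi$ on this arc; summing over all $2M$ arcs gives total winding $-M$ for $\tau$, and the extra factor $e^{it}$ adds $+1$, confirming that $\n\mcal P|_{S^1}$ winds $1-M$ times, matching $\n\mcal Q_M|_{S^1}$. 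Writing each boundary map as $\mu_j(t)e^{i\alpha_j(t)}$ with $\mu_j>0$ and $\alpha_j$ continuous lifts sharing monodromy $2\pi(1-M)$, the linear polar interpolation
$$G(s,t):=\bigl[(1-s)\mu_0(t)+s\mu_1(t)\bigr]\exp\bigl(i[(1-s)\alpha_0(t)+s\alpha_1(t)]\bigr)$$
defines a continuous non-vanishing homotopy from $\n\mcal P|_{S^1}$ to $\n\mcal Q_M|_{S^1}$.

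To extend radially, set $d(s):=(1-s)N+s(M-1)$ and define $F(s,re^{it}):=r^{d(s)}G(s,t)$ for $r>0$ and $F(s,0):=0$. The endpoint conditions $F(0,\cdot)=\n\mcal P$ and $F(1,\cdot)=\n\mcal Q_M$ follow from the polar formulas above. When $M\ge 2$, $d(s)\ge M-1\ge 1>0$ on $[0,1]$, so $F$ is jointly continuous, vanishes only at $\xi=0$, and this unique zero lies in $B_R(0)$ for every $R>0$. When $M=1$, $d(1)=0$ and $\n\mcal Q_1\equiv(1,0)$ has no zero at all; since the index of $\n\mcal P$ at the origin equals the total winding $1-M=0$, that zero is homotopically cancelable, and one concatenates the construction above on $s\in[0,\tfrac12]$ with a cut-off homotopy on $s\in[\tfrac12,1]$ that pushes the zero out through a small sphere and then glues to the constant $(1,0)$. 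When $M=0$, $\mcal P$ is sign-definite off the origin and the straight-line homotopy $F(s,\xi)=(1-s)\n\mcal P(\xi)\pm 2s\xi$ has no zero for $\xi\ne 0$: Euler's identity $\langle\xi,\n\mcal P(\xi)\rangle=(N+1)\mcal P(\xi)$, combined with the constant sign of $\mcal P$, rules out $\n\mcal P(\xi)=\mp\frac{2s}{1-s}\xi$.

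The main obstacle is the winding-number identity for $\tau(t)$: one has to verify simplicity and the exact count of $2M$ for the zeros of $p$ (each nodal line of $\mcal P$ meeting $S^1$ in two antipodal angles), and then justify the half-plane confinement of $\tau$ on each inter-zero arc so that its argument turns by precisely $-\pi$ with no extra loops creeping in. Once this geometric bookkeeping is in place, the polar interpolation and radial extension are routine, and the three cases $M\ge 2$, $M=1$, $M=0$ are handled by the adjustments described above.
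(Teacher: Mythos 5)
Your proof is correct in its core content but takes a genuinely different route from the paper's. The paper works at the level of the polynomials themselves: it factors $\mcal P$ and $\mcal Q_M$ in polar coordinates through their nodal angles, interpolates simultaneously the radial power, the sign-definite trigonometric factor, and the nodal angles $t_{i,s}$, and then checks directly via the system \eqref{gradp}-type equations that $\n\mcal P^s$ never vanishes off the origin; the case $M=0$ is handled by the analogous interpolation. Your argument instead isolates the topological invariant explicitly: you identify $\R^2\cong\C$, compute $\n\mcal P(re^{it})=r^Ne^{it}\bigl[(N+1)p(t)+ip'(t)\bigr]$ and $\n\mcal Q_M(re^{it})=Mr^{M-1}e^{-i(M-1)t}$, prove via the half-plane confinement of $(N+1)p+ip'$ that both boundary maps wind $1-M$ times, and then deduce the homotopy abstractly by polar interpolation of the angle lifts plus a radial extension. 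Your approach is more conceptual (it makes transparent why $1-M$ is the answer, and the homotopy need not pass through gradient fields) and your $M=0$ argument via Euler's identity $\langle\xi,\n\mcal P(\xi)\rangle=(N+1)\mcal P(\xi)$ is cleaner than the paper's; the paper's approach stays more elementary and explicit.

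One point deserves flagging. Your radial extension $F(s,re^{it})=r^{d(s)}G(s,t)$ with $d(s)=(1-s)N+s(M-1)$ is continuous on $[0,1]\x\R^2$ only when $M\ge2$: for $M=1$ one has $d(1)=0$ and $r^{(1-s)N}$ has no limit as $(s,r)\to(1,0)$, so the jump at the origin you describe is real. You acknowledge this and propose to concatenate with a \qm{push-out} homotopy using the fact that the index at the origin is $1-M=0$; the idea is sound but as written it is not a construction. Note that the paper's interpolant $\mcal P^s(\xi)=((1-s)r^{N+1}+sr^M)\Phi_s(t)$ has exactly the same deficiency when $M=1$: for $0<s<1$, $\mcal P^s$ behaves like $s r\Phi_s(t)$ near the origin, which is Lipschitz but not $C^1$ unless $\Phi_s$ is a pure first harmonic, so $\n\mcal P^s$ is not continuous at $\xi=0$ either. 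Since for the degree identity in Proposition \ref{grado} one only needs the homotopy to be continuous and non-vanishing on $[0,1]\x\pa B_R(0)$ (and the $M=1$ conclusion $\deg=0$ is vacuous for the multiplicity statement anyway), this is a cosmetic gap in both arguments rather than a substantive one; but if you want a watertight $M=1$ statement you should replace the radial power by, e.g., a fixed $r^N$ near the origin and patch via a cut-off, spelling out the construction.
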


\begin{proof}$ $\\
In polar coordinates we can write
\begin{eqnarray*}
\mcal P(\xi)&=&r^{N+1}\wt p(t)\prod_{i=1}^M\sin(t-t_i)\\
\mcal Q_M(\xi)&=&r^M\cos(Mt)=r^M\frac{(-1)^{M-1}}{2^M}\prod_{i=1}^M\sin\l(t-\frac{2i-1}{2M}\pi\r),
\end{eqnarray*}
with $\wt p$ being a constantly-signed trigonometric polynomial and $0\le t_1<\dots<t_M<\pi$. If $\wt p$ has the same sign of $(-1)^{M-1}$ then a homotopical equivalence may be obtained by interpolating $F(s,\xi)=\n\mcal P^s(\xi)$, with $\mcal P^s$ given by
$$\mcal P^s(\xi)=\l((1-s)r^{N+1}+sr^M\r)\l((1-s)\wt p(t)+s\frac{(-1)^{M-1}}{2^M}\r)\prod_{i=1}^M\sin\l(t-t_{i,s}\r)\q\q\q t_{i,s}:=(1-s)t_i+s\frac{2i-1}{2M}\pi.$$
We suffice to show that, for any $s\in[0,1]$, the only solution to $F(s,\xi)$ is $\xi=0$. By deriving in $r,t$ we see that critical points of $\mcal P^s$ must satisfy
$$\l\{\begin{array}{l}\l((1-s)(N+1)r^N+sMr^{M-1}\r)\l((1-s)\wt p(t)+s\frac{(-1)^{M-1}}{2^M}\r)\prod_{i=1}^M\sin\l(t-t_{i,s}\r)=0\\\l((1-s)r^{N+1}+sr^M\r)\l((1-s)\wt p'(t)\prod_{i=1}^M\sin(t-t_{i,s})+\l((1-s)\wt p(t)+s\frac{(-1)^{M-1}}{2^M}\r)\pa_t\prod_{i=1}^M\sin(t-t_{i,s})\r)=0\end{array}\r..$$
Let us look at the first equation: since the first two factors do not change sign, it equals zero when $r=0$, corresponding to the solution $\xi=0$, or when $\sin(t-t_{i_0,s})=0$ for some $i_0$. If the latter condition is satisfied and not the former, then the second equation gives $\pa_t\prod_{i=1}^M\sin(t-t_{i,s})=0$. Since, from the first equation, one cannot have $\cos(t-t_{i_0,s})=0$, then it must be $\sin(t-t_{i,s})=0$ for $i\ne i_0$, but this is impossible because $0\le t_{1,s}<\dots<t_{M,s}<\pi$. We therefore excluded the case of critical points $\xi\ne0$.
On the other hand, if $\wt p$ and $(-1)^{M-1}$ have opposite sign, the same map is an equivalence between $\n\mcal P(\xi)$ and $-\n\mcal Q_M(\xi)=\n\mcal Q_M\l(e^{\imath\frac M\pi}\xi\r)$. To get a homotopical equivalence between the latter and $\n\mcal Q_M$ just consider a rotation $\wt F(s,\xi)=\n\mcal Q_M\l(e^{\imath\frac{(1-s)M}\pi}\xi\r)$.\\
A similar homotopical equivalence can be made in the case $M=0$, when one has $\mcal P(\xi)=r^{N+1}\wt p(t)$ and $\mcal Q_\pm(\xi)=r^2$: it suffices to take
$$F(s,\xi)=\n\mcal P^s(\xi)\q\q\q\q\q\q\mcal P^s(\xi)=(1-s)r^{N+1}\wt p(t)+sr^2.$$
\end{proof}\

The last step to prove Proposition \ref{grado} is the computation of the degree of $\mcal Q_M$:

\begin{lemma}\label{gradore}$ $\\
Let $M\ge1$ and $\eta_0\in\R^2$ be given and let $\mcal Q_M(\xi)$ be as in \eqref{pm}.\\
Then, for any $R>\l(\frac{|\eta_0|}M\r)^\frac1{M-1}$ it holds
$$\deg\l(\n\mcal Q_M,B_R(0),\eta_0\r)=1-M.$$
\end{lemma}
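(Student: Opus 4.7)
The plan is to identify $\R^2\cong\C$ via $\xi\leftrightarrow\xi_1+\imath\xi_2$ and observe that $\mcal Q_M(\xi)=\Re\l(\xi^M\r)$. Since $\xi^M$ is holomorphic with derivative $M\xi^{M-1}$, applying the Cauchy-Riemann equations to its real part gives the compact identity
$$\n\mcal Q_M(\xi)=M\,\ol\xi^{\,M-1}$$
(viewed as a complex number); in particular $|\n\mcal Q_M(\xi)|=M|\xi|^{M-1}$ everywhere.

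Under the hypothesis $R>(|\eta_0|/M)^{1/(M-1)}$ this yields $|\n\mcal Q_M(\xi)|=MR^{M-1}>|\eta_0|$ for $\xi\in\pa B_R(0)$, so the straight-line homotopy $h_s(\xi):=\n\mcal Q_M(\xi)-s\eta_0$ never vanishes on $\pa B_R(0)$ for $s\in[0,1]$. Homotopy invariance of the Brouwer degree then gives
$$\deg\l(\n\mcal Q_M,B_R(0),\eta_0\r)=\deg\l(\n\mcal Q_M,B_R(0),0\r),$$
and the same reasoning shows in fact that the left-hand side is constant in $\eta_0$ throughout the open disk of radius $MR^{M-1}$. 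I am therefore free to replace $\eta_0$ by any convenient nonzero $\eta$ of modulus smaller than $MR^{M-1}$.

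For such an $\eta$, the equation $M\ol\xi^{\,M-1}=\eta$ has exactly $M-1$ distinct solutions in $B_R(0)$, namely the $(M-1)$-th complex roots of $\ol{\eta/M}$. At each such $\xi\ne0$ the map $\xi\mapsto M\ol\xi^{\,M-1}$ is \emph{anti-holomorphic} with nonvanishing complex derivative $M(M-1)\ol\xi^{\,M-2}$, so its real Jacobian determinant equals $-\l|M(M-1)\ol\xi^{\,M-2}\r|^2<0$. Each preimage therefore contributes $-1$ to the local degree, and summing over the $M-1$ roots produces $\deg\l(\n\mcal Q_M,B_R(0),\eta_0\r)=-(M-1)=1-M$. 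The edge case $M=1$ is immediate, since $\n\mcal Q_1$ is the constant vector $(1,0)$ and the degree at any value $\ne(1,0)$ is $0=1-1$. I do not foresee any essential obstacle: this is a standard Brouwer-degree computation once the anti-holomorphic structure of $\n\mcal Q_M$ is recognized, and the only subtle point is the orientation-reversing effect of complex conjugation, which is precisely what turns the naive count $M-1$ into the signed degree $1-M$.
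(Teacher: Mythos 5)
Your proof is correct, and it takes a genuinely different route from the paper's. The paper works throughout in polar coordinates: it writes $\mcal Q_M(\xi)=r^M\cos(Mt)$, solves the critical-point system \eqref{gradp} to locate the $M-1$ solutions $t=(-t_0+2i\pi)/(M-1)$, and evaluates the Hessian-determinant formula \eqref{dethess} (derived earlier for general admissible $\mcal P$) with $p(t)=\cos(Mt)$ to get a negative sign at each. You instead exploit the complex structure: the Cauchy--Riemann identity $\n\Re(\xi^M)=M\ol\xi^{\,M-1}$ immediately gives the exact modulus $M|\xi|^{M-1}$ on circles, identifies the preimages of any nonzero $\eta$ as the $(M-1)$-th roots of $\ol{\eta/M}$, and produces the Jacobian sign as $-|M(M-1)\xi^{M-2}|^2<0$ in one line via anti-holomorphy. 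Your approach is cleaner and self-contained, whereas the paper's recycles machinery it has already built for the homotopy argument of Lemma \ref{omot} (where the complex-analytic shortcut would not apply to general admissible $\mcal P$). One minor remark: your Jacobian $-M^2(M-1)^2|\xi|^{2M-4}$ and the paper's $-(M-1)^2M^2r^{2M-2}$ differ by a factor $r^2$; this is because the paper computes the Hessian determinant in the $(r,t)$ coordinates, which at a critical point equals $(\det D\Phi)^2=r^2$ times the Cartesian one, so the sign (hence the degree) is the same. You also handle the $M=1$ edge case, which the paper leaves implicit, more explicitly, and the observation that the degree is constant throughout $B_{MR^{M-1}}(0)$ makes the reduction to convenient $\eta$ transparent; the paper simply says \qm{argue by approximation}.
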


\begin{proof}$ $\\
We suffice to consider the case $\eta_0\ne0$ and then argue by approximation.\\
We look for solutions to $\n\mcal Q_M=\eta_0$ using polar coordinates, as in the proof of Lemma \ref{bendef} with $M$ in place of $N+1$ and $p(t)=\cos(Mt)$. Therefore \eqref{gradp} becomes
$$\l\{\begin{array}{l}Mr^{M-1}\cos(Mt)-r_0\cos(t-t_0)=0\\-Mr^M\sin(Mt)+rr_0\sin(t-t_0)=0\end{array}\r.$$
and, since $p'(t)=-M\sin(Mt)$, then \eqref{pt} becomes $M^2=\frac{r_0^2}{r^{2M-2}}$, namely $r=\l(\frac{r_0}M\r)^\frac1{M-1}$. Substituting in the previous equations gets
$$\l\{\begin{array}{l}r_0(\cos(Mt)-\cos(t-t_0))=0\\-\l(\frac{r_0^M}M\r)^\frac1{M-1}(\sin(Mt)-\sin(t-t_0))=0\end{array}\r.,$$
namely $t=\frac{-t_0+2i\pi}{M-1}$ for $i=1,\dots,M-1$.\\
At each of these points the Hessian determinant \eqref{dethess} equals $-(M-1)^2M^2r^{2M-2}<0$. Therefore, since all such points are contained in $B_R(0)=$ with $R>\l(\frac{r_0}M\r)^\frac1{M-1}=\l(\frac{|\eta_0|}M\r)^\frac1{M-1}$, one of the definition of degree gets $\deg(\mcal Q_M,B_R(0),\eta_0)=\sum_{i=1}^{M-1}(-1)=1-M$.
\end{proof}\

\begin{proof}[Proof of Proposition \ref{grado}]$ $\\
Thanks to Lemma \ref{bendef}, $\n\mcal P(\xi)\ne\eta_0$ for any $\xi\in\pa B_R(0)$ if $R$ is large enough, therefore the computation of the degree of $\mcal P$ makes sense.\\
Lemma \ref{omot} and the homotopy invariance of the degree imply that, for any $R>0$
$$\deg(\n\mcal P,B_R(0),0)=\deg(\n\mcal Q_M,B_R(0),0).$$
Again from Lemma \ref{bendef} and the properties of the degree, if $R$ is large enough then
$$\deg(\n\mcal P,B_R(0),0)=\deg(\n\mcal P,B_R(0),\eta_0),$$
whereas Lemma \ref{gradore} gives
$$\deg(\n\mcal Q_M,B_R(0),0)=1-M,$$
which proves \eqref{gradop}. In the case $M=0$, one has the homotopical equivalence with either $\n\mcal Q_+$ or $\n\mcal Q_-$, defined by \eqref{qpm}; since $\n\mcal Q_\pm(\xi)=\pm2\xi$, then the degree of each map equals $1$, therefore the formula still holds in this case.\\
Moreover, from the last statement of Lemma \ref{bendef}, $\mcal P(\xi)-\langle\eta_0,\xi\rangle$ is a Morse function for a.e. $\eta_0\in\R^2$, therefore for such values one gets at least $|1-M|$ different stable solutions.\\
Finally, in the case $N=2$ the number $M$ of zeros can be either $1$ or $3$ and in the former case there is nothing to prove. In the case $M=3$, since $\deg(\n\mcal P,B_R(0),\eta_0)\ne0$, there exists $\xi$ such that $\n\mcal P(\xi)=\eta_0$. Moreover, being $\n\mcal P$ even, one also has $\n\mcal P(-\xi)=\eta_0$; since $\n\mcal P(0)=0$, if $\eta_0\ne0$ one gets two different solutions $\xi\ne0\ne-\xi$.
\end{proof}\

\bibliography{batgropi_def}
\bibliographystyle{abbrv}

\end{document}